\newtheorem{prop}{Proposition}[section]
\newtheorem{thm}[prop]{Theorem}
\newtheorem{lem}[prop]{Lemma}
\theoremstyle{remark}
\theoremstyle{definition}
\newtheorem{defn}[prop]{Definition}
\newtheorem{nota}[prop]{Notation}
\newtheorem{coro}[prop]{Corollary}
\newtheorem{rem}[prop]{Remark}
\newcommand{\cl}{{\rm C\ell}}
\newcommand{\q}{{\rm q}}
\newcommand{\ep}{\epsilon}
\newcommand{\ZZ}{\mathbbm Z}
\newcommand{\RR}{\mathbbm R}
\newcommand{\CC}{\mathbbm C}
\newcommand{\ei}{\mathbbm{1}}
\renewcommand{\arraystretch}{1.3}
\begin{document}

\title{Connections on Cahen-Wallach spaces}
\author{Frank Klinker}
\date{\today}

\thanks{\vspace*{0.5ex}{\em Adv. Appl. Clifford Algebr.} (2014), 32 pp. 
\href{http://dx.doi.org/10.1007/s00006-014-0451-7}{http://dx.doi.org/10.1007/s00006-014-0451-7}} 
\address{Faculty of Mathematics, TU Dortmund University, 44221 Dortmund, Germany} 
\email{ \href{mailto:frank.klinker@math.tu-dortmund.de}{frank.klinker@math.tu-dortmund.de}}
\begin{abstract} 
We systematically discuss connections on the spinor bundle of Cahen-Wallach symmetric spaces. A large class of these connections is closely connected to a quadratic relation on Clifford algebras. This relation in turn is associated to the symmetric linear map that defines the underlying space. We present various solutions of this relation. Moreover, we show that the solutions we present here provide a complete list with respect to a particular algebraic condition on the parameters that enter into the construction.
\end{abstract}
\subjclass[2010]{
15A66, 
53C27, 
53B50  
}
\maketitle
\setcounter{tocdepth}{1}
\tableofcontents


\section{Introduction}

In this text we examine spinor connections on solvable Lorentzian symmetric spaces and a closely related quadratic relation on the Clifford algebra of a Euclidean vector space, $V$. One parameter that connects the two topics is a symmetric endomorphism $B\in{\rm End}(V)$. 
Solvable Lorentzian symmetric spaces have been classified in \cite{CW70} and thus we will use the term Cahen-Wallach-spaces or CW-space. From a differential geometric point of view CW-spaces are considered in the classification of holonomy groups of Lorentzian manifolds, see \cite{Baum12, Leist07}. In physics they are used as backgrounds of supergravity theories because they have a large space of isometries and may admit many supersymmetry generators, see for example \cite{CheKo84, Fig1, MeFig04, FigPapado1, Hustler, Meessen2002}. In the supersymmetry setting CW-spaces are sometimes called KG-spaces or just $pp$-waves, although they form a special class among them. 
In all applications connections on CW-spaces play a major role, but a systematic treatment is missing. This is the aim of the text at hand.

In the first part of our text we present the above mentioned quadratic relation on a Clifford algebra, see Definition \ref{cp}. We develop solutions of this relation: the quadratic Clifford pairs. We do this by investigating certain special approaches. As a result we get solutions for any symmetric map $B$ according to some restriction on its eigenvalue structure, see Propositions \ref{monomial}, \ref{prop2}, \ref{hom}, and \ref{hom-p}.

The second part is on the origin of the quadratic relation. As we will see, it naturally arises when we consider connections on the spinor bundle of CW-spaces. 
We will recall some basic facts on invariant connections on homogeneous spaces and especially on CW-spaces. Therefore, we introduce a notation that is well adapted to the algebraic nature of the topic, see Definition \ref{Cliffordmap}. The quadratic relation then is shown to correspond to the vanishing of the curvature of a particular class of connections. This class is defined by a trivial action of the center of the CW-space, see (\ref{20}). For the result see Theorem \ref{alphazero} and we remark that this has been found to be true before by the author in \cite{santi1}. We will also give the result for a general connection, see Theorem \ref{alphanotzero}. Furthermore, we will discuss certain restrictions of the space of spinors on which the Clifford-algebra acts and we show that some of the properties of the connections survive.

In a last part we will discuss an algebraic condition that turns the examples of quadratic Clifford pairs that we found in the first part into a complete list. In this respect we classify flat homogeneous connections on irreducible spinor bundles over CW-spaces, see Proposition \ref{c+d} and Theorem \ref{thm-c+d}.

\section{Quadratic Clifford pairs}
\label{sec:qcp}

Let us consider an finite dimensional real  vector space $V$ with Eu\-clid\-ean metric $g$. We denote by $\cl(V)$ its complex Clifford algebra\footnote{For more details on Clifford algebras, aka geometric algebras that go beyond the facts we state in this text we recommend the books \cite{Chevalley},  \cite{Harvey}, \cite{LawMich}, or \cite{Lou}} subject to the Clifford relation 
\begin{equation}\label{cliff}
vw+wv=-2g(v,w)
\end{equation} 
for all $v,w\in V$.
\begin{defn}\label{cp}
Let $(V,g)$ be a Euclidean vector space and $\cl(V)$ as above. For $a,b\in \cl(V)$ consider the map 
$q_{a,b}:\cl(V)\to \cl(V)$ that is defined by 
\begin{equation}\label{quad1}
q_{a,b}(x):=a^2x+ xb^2 -2axb\,.
\end{equation}
A {\em quadratic Clifford pair} is a pair $(a,b)\in \cl(V)\times \cl(V)$ that obeys
\begin{enumerate}
\item[(i)] $q_{a,b}V\subset V$ and 
\item[(ii)] $q_{a,b}$ restricted to $V$ is a $g$-symmetric element in ${\rm End}(V)$.
\end{enumerate}
We call the quadratic Clifford pair {\em (non)-degenerate} if $q_{a,b}\big|_V$ is a (non)-degene\-rate symmetric map and we call it {\em associated to $B\in {\rm End}(V)$} if $q_{a,b}\big|_V=B$.
\end{defn}
For $a,b\in \cl(V)$ the map $q_{a,b}$ is the square of the map $s_{a,b}$ with $s_{a,b}(x):=ax-xb$. 
Therefore, we have 
\begin{equation}\label{sum}
q_{a_1+a_2,b_1+b_2}=q_{a_1,b_1}+q_{a_2,b_2}+s_{a_1,b_1}\circ s_{a_2,b_2}+s_{a_2,b_2}\circ s_{a_1,b_1}
\end{equation} for all $a_1,a_2,b_1,b_2\in \cl(V)$ .  
 
\begin{rem}\label{a=0}
We write $a,b\in \cl(V)$ as $a=\alpha+a'$ and $b=\beta+b'$ with $\alpha$ and $\beta$ being the scalar parts of $a$ and $b$, respectively. Then $q_{a,b}$ reads as
\begin{equation}
q_{\alpha+a',\beta+b'}(x)=(\alpha-\beta)^2x+q_{a',b'}(x)-2(\alpha-\beta)s_{a',b'}(x)\,.
\end{equation}
This means that $q_{a,b}$ is invariant under the action of the scalars given by $(a,b)\mapsto (a+\alpha,b+\alpha)$. Therefore, we may assume that the scalar part of $a$ or $b$ vanishes when we look for quadratic Clifford pairs $(a,b)$. 
\end{rem}

Let $\Gamma$ be an irreducible matrix representation of $\cl(V)$ and let $\{\Gamma_\mu\}$ be a set of $\gamma$-matrices associated to a basis $\{e_\mu\}$ of $V$, i.e.\ $\Gamma_\mu=\Gamma(e_\mu)$ with $\Gamma_\mu\Gamma_\nu+\Gamma_\nu\Gamma_\mu=-2g_{\mu\nu}\mathbbm{1}$. 
When $q_{a,b}$ obeys (ii) this basis can be chosen as an orthonormal basis of eigenvectors of $q_{a,b}$. 

We use the following abbreviation for products of $\gamma$-matrices. Consider the multi-index $I=(i_1,\ldots,i_k)\in\{1,\ldots,n\}^k$ for $k=0,\ldots,n$, then 
\[
\Gamma_I= \Gamma_{\mu_1\ldots \mu_k}:=\Gamma_{[\mu_1}\Gamma_{\mu_2}\cdots\Gamma_{\mu_k]}\,.
\]
For $k=0$ we set $\Gamma_{\emptyset}=\mathbbm{1}$ and we write $\hat I$ for the length of the index, i.e.\ $\hat I:=k$  for $I\in\{1,\ldots,n\}^k$. If $\Gamma_\mu$ is associated to an orthonormal basis and $I$ and $J$ contain the same indices we have $\Gamma_I=\epsilon_J^I\Gamma_J$, where $\epsilon_J^I$ is the sign of  permutation from $I$ to $J$. If we do not care about the ordering of the indices within $I$ we identify the multi-index with the set of indices itself. Therefore, the notations $I\cap J$, $I\cup J$ and $I^\complement$ make sense. Moreover, due to (\ref{cliff}) we have $\Gamma_I^2=\sigma_I\mathbbm{1}$ with $\sigma_I\in\{\pm1\}$. For the combination of $\gamma$-matrices of maximal length we write
\[
\Gamma^*=i^{\left[\frac{n+1}{2}\right]}\Gamma_{1\ldots n}.
\]
Then $\Gamma^*$ is the image of the complex volume form and obeys $(\Gamma^*)^2=\ei$. 

\begin{rem}\label{aut}
We shortly recall two important involutions of the Clifford algebra: By $\bar{\ } :\cl(V)\to \cl(V)$ we denote the extension of $V\ni v\to -v\in V$ to an automorphism of $\cl(V)$ and by $\tilde{\ }:\cl(V)\to \cl(V)$ we denote the extension of $V\ni v\mapsto  v\in V$ to an antiautomorphism of $\cl(V)$. For $I=(\mu_1,\ldots ,\mu_{\hat I})$ we get $\bar\Gamma_I=(-)^{\hat I}\Gamma_I$ and $\tilde\Gamma_I=\Gamma_{\mu_{\hat I}\ldots\mu_1}=(-1)^{\frac{\hat I(\hat I-1)}{2}}\Gamma_I$. The elements in the $+1$ and $-1$ eigenspaces of $\bar{\ }$ are called {\em even} and {\em odd}, respectively.
\end{rem}
Due to relation (\ref{cliff}) a generic element $c\in \cl(V)$ can be written as\footnote{When we consider such expansions the sum is taken over indices of the form $I=(i_1,\ldots,i_{\hat I})$ with $i_1<i_2<\cdots<i_{\hat I}$.}
\begin{equation*}
c=\sum_{I} c_I\Gamma_I, c_I\in \CC\,.
\end{equation*}

\begin{rem}
Suppose $c$ and $d$ are both even or odd with respect to the $\mathbbm{Z}_2$-grading of $\cl(V)$ and $c,d$ is a quadratic Clifford pair. Then so is $(-\Gamma^*c,\Gamma^*d)$ with $q_{c,d}=(-1)^{{\rm deg}_\ZZ c}q_{-\Gamma^*c,\Gamma^*d}$ if $\dim(V)$ is even, and $(\Gamma^*c,\Gamma^*d)$ with $q_{c,d}=q_{\Gamma^*c,\Gamma^*d}$ if $\dim(V)$ is odd.
\end{rem}

\subsection{Traces and symmetry}\label{traces}

Let us identify $\cl(V)={\rm End}(S)$ via $\Gamma$ where $S$ is an irreducible Clifford module for $\cl(V)$. Then the $I$-component of $a=\sum_Ia_I\Gamma_I\in \cl(V)$ can be evaluated by the trace via
\begin{equation}
a_I = (-1)^{\frac{\hat I(\hat I+1)}{2}}\frac{1}{{\rm tr}(\mathbbm{1})} {\rm tr}\left(a\Gamma_I\right)\,.
\end{equation}
The trace yields an inner product on $\cl(V)$ by $\langle a,b\rangle={\rm tr}(a\tilde b)$ where for $b=\sum_Ib_I\Gamma_I$ is given by $\tilde b=\sum_I(-1)^{\frac{\hat I(\hat I-1)}{2}}b_I\Gamma_I$, see Remark \ref{aut}. This product is naturally induced by the inner product $g$ on $V$ because it coincides with the extension of $g$ to the Grassmann algebra $\Lambda^*V^*$. In particular, the basis $\{\Gamma_I\}_{\hat I\geq0}$ obtained from an orthonormal basis $\{e_\mu\}$ of $V$ obeys ${\rm tr}(\Gamma_I\tilde\Gamma_J)=\delta^I_J{\rm tr}(\mathbbm{1})$ with $\delta_I^J=\delta_{\hat I}^{\hat J}\delta_{i_1\ldots i_{\hat I}}^{j_1\ldots j_{\hat J}}$. 

The transpose of $q_{c,d}$ with respect to the induced inner product on $\cl(V)$ is determined by $q_{d,c}$. More precisely,
\begin{align*}
(q_{c,d})_{I}^{J} &= (-)^{\frac{\hat J(\hat J-1)}{2}}{\rm tr}(q_{c,d}(\Gamma_I)\Gamma_J) \\
&	= (-)^{\frac{\hat J(\hat J-1)}{2}}{\rm tr}(c^2\Gamma_I\Gamma_J+\Gamma_I d^2\Gamma_J -2c\Gamma_I d\Gamma_J) \\
&	= (-)^{\frac{\hat J(\hat J-1)}{2}}{\rm tr}(d^2\Gamma_J\Gamma_I+\Gamma_J c^2\Gamma_I -2d\Gamma_J c\Gamma_I) \\
&	= (-)^{\frac{\hat J(\hat J-1)}{2}}{\rm tr}(q_{d,c}(\Gamma_J)\Gamma_I) \\
&	= (-)^{\frac{(\hat I+\hat J)(\hat I+\hat J-1)}{2}}(-)^{\hat I\hat J}(q_{d,c})_{J}^{I} \,.
\end{align*}
In particular, if $(c,d)$ is a quadratic Clifford pair, i.e.\ $q_{c,d}\big|_V$ is a symmetric on $V$, then ${\rm proj}_Vq_{d,c}(v) =q_{c,d}(v)$ on $V$. Moreover, if $(c,d)$ is a quadratic Clifford pair then  $(d,c)$ is a quadratic Clifford pair if and only if  $q_{c,d}$ respects the vector space splitting $\cl_1(V)\oplus\bigoplus_{r\neq1}\cl_r(V)$. All quadratic Clifford pairs that we will introduce soon are of this type.

\subsection{Monomial solutions}\label{sec:quadraticpairs}

Let us consider $w\in V$ such that $q_{w,-w}(v)=B(v)$ with $B=ww^t$. This map is symmetric but degenerate, of course. However, this easy example motivates the first important class of quadratic Clifford pairs. For this, consider homogeneous elements $c=\alpha\Gamma_I$ and $d=\beta\Gamma_J$. Then $c^2=\alpha^2\sigma_{I}\mathbbm{1}$ and $d^2=\beta^2\sigma_{J}\mathbbm{1}$  and  (\ref{quad1}) reduces to
\begin{equation}
q_{c,d}(e_\mu)=(\sigma_{I}\alpha^2+\sigma_{J}\beta^2)\Gamma_\mu
		 -2\alpha\beta\Gamma_I\Gamma_\mu\Gamma_J\,.  \label{quad2}
\end{equation}
We will take a look at the second term in (\ref{quad2}) and consider three cases. We set  $I\cap J=K$, $I=I_0\cup K$, $J=J_0\cup K$, and discuss (a) $\mu\not\in I\cup J$, (b) $\mu\in I_0\cup J_0$, and (c) $\mu\in K$.
We get the following results:

Case (a): We have $\Gamma_I=\epsilon^{I}_{I_0K}\Gamma_{I_0}\Gamma_K$ and $\Gamma_J=\epsilon^{J}_{KJ_0}\Gamma_K\Gamma_{J_0}$ such that 
\begin{equation}
\begin{aligned}\label{a}
\Gamma_I\Gamma_\mu\Gamma_J
	& =\epsilon^{I}_{I_0K}\epsilon^{J}_{KJ_0}\Gamma_{I_0}\Gamma_K\Gamma_\mu\Gamma_K\Gamma_{J_0}
	    =-\sigma_{K}(-1)^{\hat K}\epsilon^{I}_{I_0K}\epsilon^{J}_{KJ_0}\Gamma_{I_0}\Gamma_{\mu}\Gamma_{J_0}\\
	& =(-1)^{\hat I_0}(-1)^{\hat K}\sigma_{K}\epsilon^{I}_{I_0K}\epsilon^{J}_{KJ_0}\Gamma_{\mu I_0J_0}\,.
\end{aligned}
\end{equation}
Case (b): This case is independent of whether  $\mu\in I_0$ or $\mu\in J_0$ and we get for $\mu\in J_0$
\begin{equation}
\begin{aligned}\label{b}
\Gamma_I\Gamma_\mu\Gamma_J
	& =\epsilon^{I}_{I_0K}\epsilon^{J}_{KJ_0}\Gamma_{I_0}\Gamma_K\Gamma_\mu\Gamma_K\Gamma_{J_0}
	    =-\sigma_{ K}\epsilon^{I}_{I_0K}\epsilon^{J}_{KJ_0}\Gamma_{I_0}\Gamma_{\mu}\Gamma_{J_0}\\
	& =-(-1)^{\hat I_0}\sigma_{ K}\epsilon^{I}_{I_0K}\epsilon^{J}_{KJ_0} (\hat I_0+\hat J_0)g_{\mu[i_1}\Gamma_{i_2\ldots i_{\hat I_0}J_0]}\,.
\end{aligned}
\end{equation}
Case (c): 
\begin{equation}\begin{aligned}\label{c}
\Gamma_I\Gamma_\mu\Gamma_J
	& =\epsilon^{I}_{I_0K}\epsilon^{J}_{KJ_0}\Gamma_{I_0}\Gamma_K\Gamma_\mu\Gamma_K\Gamma_{J_0}
	  =(-1)^{\hat J_0}\epsilon^{I}_{I_0K}\epsilon^{J}_{KJ_0}  \Gamma_{I_0}\Gamma_{J_0}\Gamma_K\Gamma_\mu\Gamma_K\\
	&= (-1)^{\hat I_0}(-1)^{\hat K-1}\sigma_K\epsilon^{I}_{I_0K}\epsilon^{J}_{KJ_0}\Gamma_{\mu I_0J_0}\,,
\end{aligned}\end{equation}
where the last equality is due to the following observation: We have  $\Gamma_K\Gamma_\mu\Gamma_K=
(-1)^{\hat K_0}(-1)^{\hat K_0}\Gamma_\mu\Gamma_\mu\Gamma_{K_0}\Gamma_{K_0}\Gamma_\mu
=-\sigma_{K_0}\Gamma_\mu$ and $\sigma_{K}=-(-1)^{\hat K_0}\sigma_{K_0}$  for $K=(\mu,K_0)$. 

\begin{prop}\label{monomial}
Two monomials $c=\alpha\Gamma_{I}$, $d=\beta\Gamma_{J}$ yield a non-degene\-rate quadratic Clifford pair if and only if the index sets $I$ and $J$ coincide up to order or one of both is empty. If $I$ and $J$ coincide up to order and $\hat I>0$ the eigenvalues of $q_{c,d}$ are, up to sign, given by $(\alpha\pm \beta)^2$ with multiplicities $\hat I$ and $\dim V-\hat I$.
\end{prop}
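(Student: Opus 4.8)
The plan is to read the two requirements of Definition~\ref{cp} directly off the reduced identity~\eqref{quad2}. The degenerate sub-case is quick: if $\alpha=0$ or $\beta=0$ one monomial vanishes, so $q_{c,d}\big|_V$ equals $\sigma_J\beta^2\,\id$ or $\sigma_I\alpha^2\,\id$, symmetric and non-degenerate precisely when the surviving coefficient is nonzero; this is the ``one of $I,J$ empty'' branch. So from now on I assume $\alpha\beta\neq0$. Since the first summand $(\sigma_I\alpha^2+\sigma_J\beta^2)\Gamma_\mu$ of~\eqref{quad2} already lies in $V$, condition~(i) is equivalent to the single demand that $\Gamma_I\Gamma_\mu\Gamma_J\in V$ for every basis index $\mu$. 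I keep the splitting $K=I\cap J$, $I=I_0\cup K$, $J=J_0\cup K$ from the case analysis that precedes the statement.

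The ``only if'' direction is then a degree count drawing on~\eqref{a}--\eqref{c}. With respect to an orthonormal basis the index sets $\{\mu\}$, $I_0$, $J_0$ are pairwise disjoint both in case~(a), $\mu\notin I\cup J$, and in case~(c), $\mu\in K$, so in both situations $\Gamma_I\Gamma_\mu\Gamma_J$ is --- up to a sign --- the single basis element indexed by $\{\mu\}\cup I_0\cup J_0$, of degree $1+\hat I_0+\hat J_0$; in case~(b), $\mu\in I_0\cup J_0$, the contracted expression of~\eqref{b} has degree $\hat I_0+\hat J_0-1$. Forcing each $\Gamma_I\Gamma_\mu\Gamma_J$ to be homogeneous of degree~$1$ then forces $\hat I_0=\hat J_0=0$, i.e.\ $I$ and $J$ coincide up to order, \emph{as soon as some basis index falls into case~(a) or case~(c)}. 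The only way no index does is $I\cap J=\emptyset$ together with $I\cup J=\{1,\dots,n\}$; then every $\mu$ is a case~(b) index, the degree is $n-1$, and $n-1=1$ forces $n=2$ and $\{I,J\}=\{\{1\},\{2\}\}$ --- a residual low-dimensional configuration that one then settles by hand. I expect pinning down exactly this escape case --- the one scenario the case~(a)/(c) argument cannot reach --- to be the main obstacle; by contrast the permutation signs $\epsilon$ and $\sigma$ in~\eqref{a}--\eqref{c} are immaterial here, only degrees entering.

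For the converse, assume $I=J$ up to order, with $\hat I=k\geq1$. Pushing $\Gamma_\mu$ through the $k$ factors of $\Gamma_I$ and using $\Gamma_\mu^2=-\ei$ and $\Gamma_I^2=\sigma_I\ei$ gives
\[
\Gamma_I\Gamma_\mu\Gamma_I=\begin{cases}(-1)^{k}\sigma_I\,\Gamma_\mu, & \mu\notin I,\\ -(-1)^{k}\sigma_I\,\Gamma_\mu, & \mu\in I,\end{cases}
\]
so~\eqref{quad2} shows $q_{c,d}$ is diagonal in the orthonormal basis $\{e_\mu\}$ --- in particular condition~(ii) holds automatically --- with eigenvalue $\sigma_I\bigl(\alpha+(-1)^{k}\beta\bigr)^2$ on ${\rm span}\{e_\mu:\mu\in I\}$, of multiplicity $\hat I$, and eigenvalue $\sigma_I\bigl(\alpha-(-1)^{k}\beta\bigr)^2$ on its $g$-orthogonal complement, of multiplicity $\dim V-\hat I$. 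As $\sigma_I=\pm1$, these two values are $(\alpha\pm\beta)^2$ up to a common sign, which is the asserted eigenvalue list, and $q_{c,d}\big|_V$ is non-degenerate exactly when $\alpha^2\neq\beta^2$. This closes the equivalence and delivers the multiplicities at the same time.
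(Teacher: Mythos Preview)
Your argument follows the paper's route exactly: both reduce condition~(i) to the vanishing of the higher-degree pieces in \eqref{a}--\eqref{c}, conclude $I_0=J_0=\emptyset$, and then read off the eigenvalues from~\eqref{quad2}. You supply the degree-count details the paper leaves implicit, and your converse computation matches the paper's formula \eqref{eigenvaluesmono} (the sign $\epsilon_I^J$ from $\Gamma_J=\epsilon_I^J\Gamma_I$ is the only thing you suppress, and it does not affect the eigenvalue list).

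The one substantive issue is the escape case you isolate but do not finish: $n=2$, $\{I,J\}=\{\{1\},\{2\}\}$. If you actually carry it out you find
\[
q_{c,d}\big|_V \ \text{has matrix}\ \begin{pmatrix}-(\alpha^2+\beta^2)&2\alpha\beta\\2\alpha\beta&-(\alpha^2+\beta^2)\end{pmatrix}
\]
in the orthonormal basis, which \emph{is} symmetric and has determinant $(\alpha^2-\beta^2)^2$; so for $\alpha^2\neq\beta^2$ this is a non-degenerate quadratic Clifford pair with $I\neq J$ and neither empty. In other words, the case does not settle in the direction the statement predicts --- it is a literal counterexample to the ``only if'' clause. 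The paper's own proof glosses over this (it simply asserts that \eqref{a}--\eqref{c} force $I_0=J_0=\emptyset$, without isolating the all-case-(b) scenario). In the paper's later language this $n=2$ pair gets absorbed as pseudo-monomial via $\Gamma_2\sim\Gamma^*\Gamma_1$ (Case~2b of Table~\ref{table1}), so nothing downstream breaks; but the proposition as stated tacitly needs $n\geq3$, or an explicit exclusion of the configuration $K=\emptyset$, $I\cup J=\{1,\dots,n\}$. Your instinct that this was the main obstacle was correct --- it just resolves against the claim rather than for it.
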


\begin{proof}
If $c,d\neq0$ equations (\ref{a})-(\ref{c}) yield $I_0=J_0 =\emptyset$ for $q_{c,d}$ to be contained in $V$, such that the index sets $I$ and $J$ coincide up to order. 
Therefore, $\q_{c,d}=(\alpha^2-\beta^2)\mathbbm{1}$ if  $\hat K=0$, i.e.\ $I=J=\emptyset$. 
If $\hat K>0$ (\ref{a}) and (\ref{c}) reduce to  
$(-1)^{\hat K}\epsilon^{I}_{J}\sigma_{K}\Gamma_{\mu}$ and 
$-(-1)^{\hat K}\epsilon^{I}_{J}\sigma_K\Gamma_{\mu}$, respectively, such that
\begin{equation}\label{eigenvaluesmono}
q_{c,d}(e_\mu)=\begin{cases} 
\sigma_I\big(\alpha+(-1)^{\hat I}\epsilon_I^J\beta\big)^2\Gamma_\mu&\text{ for }\mu\in I\\
\sigma_I\big(\alpha-(-1)^{\hat I}\epsilon_I^J\beta\big)^2\Gamma_\mu&\text{ for }\mu\not\in I
\end{cases}\,.
\end{equation}
Regardless of the assumption this also includes the case $\alpha\beta=0$.
\end{proof}

\begin{defn}
The pairs from Proposition \ref{monomial} are called {\em monomial quadratic Clifford pairs}.
\end{defn}

\begin{rem}
Consider two non-zero positive numbers $\lambda_1,\lambda_2$. Then for any $I$ with $0<\hat I<\dim(V)$ there exist a monomial quadratic Clifford pair $(c,d)$ such that $q_{c,d}$ obeys (ii) with eigenvalues $\lambda_1,\lambda_2$. The pair is (almost) unique. The mild restriction on uniqueness is due to the  obvious $\mathbbm{Z}_2$-symmetry  determined by the squares in (\ref{eigenvaluesmono}).

It is clear that for every Clifford pair $(c,d)$ and for any non-vanishing scalar $\alpha$ the pair $(\alpha c,\alpha d)$ is a Clifford pair, too. If we consider such pairs as equivalent, then -- in case of monomial quadratic Clifford pairs -- an equivalence class is (almost) uniquely determined by the ratio of the two eigenvalues.  

If we in addition fix the multiplicities of the eigenvalues then there exist at most two pairs associated to $\hat I\leq\hat J$ with $\hat I=\dim(V)-\hat J$. These two cases are related by the volume form $\Gamma_I=\Gamma^*\Gamma_J$. Therefore, if $\dim(V)$ is odd we will restrict to  $\hat I\leq\frac{1}{2}\dim(V)$ because $\Gamma^*=\pm\mathbbm{1}$.
\end{rem}

\subsection{Pseudo monomial solutions}

We consider slightly more general maps $q_{c,d}$ by taking Clifford elements  $c=\alpha\Gamma_I+\beta\Gamma_J$ and $d=\gamma\Gamma_I+ \delta\Gamma_J$ with scalars $\alpha$, $\beta$, $\gamma$, and $\delta$. The calculations in Section \ref{App1} show that the following holds.

\begin{prop}\label{moregen1}  
The approach $c=(\alpha\Gamma_I+\beta\Gamma_J)\Gamma_K$, $d=(\gamma\Gamma_I+\delta\Gamma_J)\Gamma_K$ with non-vanishing scalars $\alpha,\beta,\gamma,\delta$ and $I\cap J=I\cap K=J\cap K=\emptyset$ yields a non-monomial quadratic Clifford pair if and only if the combination is from Table \ref{table1}.

If one of the entries vanishes, the pair $(c,0)$ or $(0,d)$ yields a quadratic Clifford pair if and only if $(-1)^{\hat I\hat J+(\hat I+\hat J)\hat K}=-1$. Then $q_{c,0}=(\alpha^2\sigma_{IK}+\beta^2\sigma_{JK})\mathbbm{1}$, for example.
\end{prop}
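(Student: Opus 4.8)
The proof is an explicit, if lengthy, evaluation of $q_{c,d}(\Gamma_\mu)$, organised by the parities of $\hat I$, $\hat J$, $\hat K$; here is how I would set it up. Put $P=\alpha\Gamma_I+\beta\Gamma_J$, $Q=\gamma\Gamma_I+\delta\Gamma_J$, so $c=P\Gamma_K$, $d=Q\Gamma_K$, and
\[
q_{c,d}(\Gamma_\mu)=c^2\Gamma_\mu+\Gamma_\mu d^2-2\,c\Gamma_\mu d
 =P\Gamma_K P\Gamma_K\Gamma_\mu+\Gamma_\mu Q\Gamma_K Q\Gamma_K-2\,P\Gamma_K\Gamma_\mu Q\Gamma_K .
\]
Since $I,J,K$ are pairwise disjoint, $\Gamma_K$ commutes or anticommutes with $\Gamma_I$ and with $\Gamma_J$ according to the parities $\hat I\hat K$, $\hat J\hat K$, and $\Gamma_K^2=\sigma_K\mathbbm{1}$; hence $\Gamma_KP\Gamma_K=\sigma_KP^\flat$ with $P^\flat:=\alpha(-1)^{\hat I\hat K}\Gamma_I+\beta(-1)^{\hat J\hat K}\Gamma_J$, and likewise for $Q$. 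Using $\sigma_{IK}=(-1)^{\hat I\hat K}\sigma_I\sigma_K$ and $\Gamma_J\Gamma_I=(-1)^{\hat I\hat J}\Gamma_I\Gamma_J$ one obtains
\[
c^2=\sigma_KPP^\flat=(\alpha^2\sigma_{IK}+\beta^2\sigma_{JK})\mathbbm{1}+\sigma_K\alpha\beta\big((-1)^{\hat J\hat K}+(-1)^{\hat I\hat J+\hat I\hat K}\big)\Gamma_I\Gamma_J ,
\]
and the same with $(\gamma,\delta)$ for $d^2$. So a first dichotomy appears: whether $(-1)^{\hat I\hat J+(\hat I+\hat J)\hat K}$ equals $-1$ (then $c^2,d^2$ are central) or $+1$ (then both carry a nonzero $\Gamma_{I\cup J}$-part that has to be cancelled later).

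Next I would compute $P\Gamma_K\Gamma_\mu Q\Gamma_K$ by the same three-case split as for the monomial pairs, namely (a)~$\mu\notin I\cup J\cup K$, (b)~$\mu\in I\cup J$, (c)~$\mu\in K$, reusing the reductions~(\ref{a})--(\ref{c}) and the identity $\Gamma_K\Gamma_\mu\Gamma_K=-\sigma_{K_0}\Gamma_\mu$. Expanding $P$, $Q$ produces four terms carrying $\alpha\gamma$, $\alpha\delta$, $\beta\gamma$, $\beta\delta$: the ``like'' terms $\Gamma_I\cdots\Gamma_I$, $\Gamma_J\cdots\Gamma_J$ give multiples of $\Gamma_\mu$; the ``mixed'' terms $\Gamma_I\cdots\Gamma_J$, $\Gamma_J\cdots\Gamma_I$ give multiples of $\Gamma_{\mu IJ}$ in cases~(a), (c) (with shifted signs in (c)) and contraction monomials of the type in~(\ref{b}) in case~(b). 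Collecting everything, $q_{c,d}(\Gamma_\mu)$ becomes a combination of $\Gamma_\mu$, $\Gamma_{\mu IJ}$, $\Gamma_{I\cup J}$ (the leftover of $c^2,d^2$) and contraction monomials; condition~(i) of Definition~\ref{cp} forces every coefficient but that of $\Gamma_\mu$ to vanish. The $\Gamma_{\mu IJ}$-coefficient is a $\mu$-independent sign times $\alpha\delta$ plus one times $\beta\gamma$, so it vanishes iff $\alpha\delta=\pm\beta\gamma$ with a sign depending only on the residues of $\hat I,\hat J,\hat K$ modulo~$2$, and the $\Gamma_{I\cup J}$-coefficient likewise constrains $\alpha\beta$ and $\gamma\delta$. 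Running through the finitely many residue patterns for which these relations are solvable with $\alpha,\beta,\gamma,\delta\neq0$, then imposing condition~(ii) on the remaining map and reading off its eigenvalues, produces exactly the rows of Table~\ref{table1}; since $c$ is then a sum of two distinct monomials, the pair is automatically non-monomial.

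In the degenerate case, say $d=0$ (the case $c=0$ being analogous by the transpose symmetry of Section~\ref{traces}), $q_{c,0}(x)=c^2x$, so conditions~(i)--(ii) hold if and only if $c^2$ is a scalar; by the formula for $c^2$ above, and since $\Gamma_I\Gamma_J=\pm\Gamma_{I\cup J}$ is not central (as $I$, $J$ are distinct and not both empty), this happens exactly when $(-1)^{\hat J\hat K}+(-1)^{\hat I\hat J+\hat I\hat K}=0$, i.e.\ $(-1)^{\hat I\hat J+(\hat I+\hat J)\hat K}=-1$, and then $c^2=(\alpha^2\sigma_{IK}+\beta^2\sigma_{JK})\mathbbm{1}$. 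The one real difficulty is bookkeeping: keeping the many parity- and $\sigma$-dependent signs consistent across the three cases so that the coefficients of the basis monomials $\Gamma_\mu$, $\Gamma_{\mu IJ}$, $\Gamma_{I\cup J}$ and the contraction terms are correct, and verifying that the surviving parameter combinations coincide exactly with the table, with no spurious solutions and no missed branch --- precisely the computations carried out in Section~\ref{App1}.
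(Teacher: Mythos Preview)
Your overall strategy---compute $q_{c,d}(\Gamma_\mu)$ explicitly, isolate the part not proportional to $\Gamma_\mu$, set it to zero, and solve the resulting system case by case on the parities of $\hat I,\hat J,\hat K$---is exactly the paper's approach in Section~\ref{App1}. The degenerate case $(c,0)$ is handled correctly.

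There is, however, a concrete mis-step in how you describe the constraints. You write that $q_{c,d}(\Gamma_\mu)$ splits into a $\Gamma_\mu$-part, a $\Gamma_{\mu IJ}$-part involving only $\alpha\delta$ and $\beta\gamma$ with $\mu$-independent sign, and a separate ``$\Gamma_{I\cup J}$ leftover'' from $c^2,d^2$ involving $\alpha\beta$ and $\gamma\delta$. This is not right: the $\Gamma_I\Gamma_J$-part of $c^2$ gets multiplied by $\Gamma_\mu$, so $c^2\Gamma_\mu$ and $\Gamma_\mu d^2$ contribute to the \emph{same} monomial $\Gamma_I\Gamma_J\Gamma_\mu$ as the mixed terms of $c\Gamma_\mu d$. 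There is a single obstruction monomial, and its coefficient involves all four products $\alpha\beta,\gamma\delta,\alpha\delta,\beta\gamma$, with signs that \emph{do} depend on $\mu$ through $\theta^\mu_I,\theta^\mu_J,\theta^\mu_K$; compare the paper's formula~(\ref{formel}). This $\mu$-dependence is precisely what produces up to four independent linear equations (one for each of $\mu\in I$, $\mu\in J$, $\mu\in K$, $\mu\notin I\cup J\cup K$), which the paper encodes as the $4\times4$ matrices in~(\ref{alle}) and then reduces row by row in Cases~1a--4b.

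If you carried your version through literally you would get a single relation $\alpha\delta=\pm\beta\gamma$ together with a decoupled relation on $\alpha\beta,\gamma\delta$, which is both too weak and of the wrong shape; in particular you would not see why, e.g., Case~1a forces monomiality while Case~1b allows $(c,\bar c)$. Once you correct this---one obstruction coefficient, four $\theta$-configurations, a $4\times4$ system in $(\alpha\beta,\gamma\delta,\alpha\delta,\beta\gamma)$---the remainder of your plan coincides with the paper's and goes through.
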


\noindent\makebox[\textwidth]{%
\begin{minipage}{1.5\textwidth}\captionof{table}{Some non-monomial quadratic Clifford pairs}\label{table1}
{\centering $
\displaystyle
{
\begin{array}{c|l|l}
\multicolumn{1}{c|}{\text{Case} 	}
			&\multicolumn{1}{c|}{ \text{Conditions on } K,I,J }				
			&\multicolumn{1}{c}{ \text{Pairs} }														\\\hline
\text{1b} 	&\hat I\hat J\neq 0, \hat K=0, (I\cup J)^\complement\neq\emptyset		
			& \big(c,\bar c\big),\  c=\alpha\Gamma_I+\beta\Gamma_J 					\\
			& \hat I\hat J\equiv0\,{\rm mod}\,2		
			&
\\\hline
\text{2a}	& \hat I\hat J\neq 0, \hat K\neq0, (I\cup J\cup K)^\complement=\emptyset, 
			& \big(c,-\bar c\big),\   c=(\alpha\Gamma_K+\beta\Gamma^*)\Gamma_I 		\\
			& \Gamma_J\Gamma_I\Gamma_K=\Gamma^*
			&
	\\\hline
\text{2b} 	&\hat I\hat J\neq 0, \hat K=0, (I\cup J)^\complement=\emptyset
			& \big( c, \pm c\big),\ c=(\alpha+\beta\Gamma^*)\Gamma_I  				\\
			& \Gamma_J=\Gamma^*\Gamma_I, \hat I\equiv\hat J\equiv0\,{\rm mod}\,2	
			&
	\\\cline{2-3}
			& \hat I\hat J\neq 0, \hat K=0, (I\cup J)^\complement=\emptyset	
			& \big(\alpha(\cos(\phi)e^{i\psi}+\sin(\phi)\Gamma^*)\Gamma_I,\\
			& \Gamma_J=\Gamma^*\Gamma_I, \hat I\equiv\hat J\equiv1\,{\rm mod}\,2		
			& \quad  \beta(\cos(\phi)e^{i\psi}-\sin(\phi)\Gamma^*)\Gamma_I \big){}^\dagger			\\\hline
\text{3a} 	&\hat I=0, \hat K\neq0, (J\cup K)^\complement\neq\emptyset
			& \big(c_+,\bar c_-\big),\  c_\pm=(\alpha\pm \beta\Gamma_J)\Gamma_K 	\\			
			& \hat K\hat J\equiv0\,{\rm mod}\, 2
			&
	\\\hline
\text{3b} 	&\hat I=\hat K=0, J^\complement\neq\emptyset
			& \big(\alpha+\beta\Gamma_J,\alpha+\delta\Gamma_J\big)					
	\\\hline
\text{4a} 	&\hat I=0, \hat K\neq0, (J\cup K)^\complement=\emptyset
			& \big(\alpha\Gamma_K+\beta\Gamma^*,\gamma\Gamma_K-\beta\Gamma^*\big)	\\
			& \hat J\equiv\hat K\equiv0\,{\rm mod}\, 2, \Gamma_J\Gamma_K=\Gamma^*
			& \quad  		
	\\\hline			
\text{4b}	&\hat I=\hat K=0, \Gamma_J=\Gamma^*
			& \big(\alpha+\beta\Gamma^*,\alpha+\delta\Gamma^*\big),					\\		
			&\hat J=\dim(V)\equiv0\,{\rm mod}\,2
			&\big(\alpha+\beta\Gamma^*,\gamma-\beta\Gamma^*\big)
	\\
\multicolumn{3}{p{11cm}}{${\quad}^\dagger$  If in case 2b  with $\hat I$ odd both prefactors do not vanish we may write the pair in the form $(c,d)=(\alpha(\gamma+\Gamma^*)\Gamma_I ,\beta(\gamma+\Gamma^*)\Gamma_I$. If we restrict to real or imaginary pairs we may consider $\psi=0$. Finally, there are corresponding monomial pairs, namely for $\phi=0$ or $\phi=\frac{\pi}{2}$.}
\end{array}
}$\\}
\end{minipage}
}

The preceding Proposition tells us what combination yields non-monomial quadratic Clifford pairs. The next natural question is: Which of these is non-degenerate or obeys $q_{c,d}\not\sim\mathbbm{1}$? The answer is also given by the discussion in Section \ref{App1} and is summarized as follows.
\begin{rem}\label{moregen2}
The Clifford pairs of Table \ref{table1} are non-degenerate only in Cases 2b, 3b, 4a, and 4b.
In Case 3b the map $q_{c,d}$ has two eigenvalues that are independent of the parameter $\alpha$. Therefore, we may choose $\alpha=0$ such that we are left with a monomial pair. The same is true for the first pair in Case 4b.\footnote{With respect to Remark \ref{a=0} we could have chosen a priori $\alpha=0$ in the cases with $\hat I=\hat K=0$.}  In Case 4a and the second pair of Case 4b the eigenvalues are independent of $\beta$ such that we may choose $\beta=0$. Moreover, for the pairs in Case 4b we have $q_{c,d}\sim\mathbbm{1}$. 

Last but not least we are left with Case 2b in even dimensions. Here the quadratic form generically admits two eigenvalues. The eigenvalues and their multiplicities in the different subcases are given in the second column of Table \ref{table2}.  

\begin{minipage}{\textwidth}\captionof{table}{Non-monomial, non-degenerate quadratic Clifford pairs in Case 2b}\label{table2}
{\centering $
\displaystyle
{
\begin{array}{l|l}
\multicolumn{1}{c|}{\text{Pair}} &\multicolumn{1}{c}{\text{Eigenvalues}}\\
\hline
	\big((\alpha+\beta\Gamma^*)\Gamma_I,(\alpha+\beta\Gamma^*)\Gamma_I  \big) 	
		& \begin{array}{ll}
				4\sigma_I\alpha^2\,,& \hat I\text{ even} \\ 
				4\sigma_I\beta^2\,,&  \dim(V)-\hat I\text{ even} 
		  \end{array} \\\hline
	\big((\alpha+\beta\Gamma^*)\Gamma_I,-(\alpha+\beta\Gamma^*)\Gamma_I \big) 	
		& \begin{array}{ll}
				4\sigma_I\beta^2\,,& \hat I\text{ even} \\ 
				4\sigma_I\alpha^2\,,& \dim(V)-\hat I\text{ even}
		  \end{array}\\\hline
\begin{array}{l}
		\!\!\big( \alpha(\cos(\phi)+\sin(\phi)\Gamma^*) \Gamma_I, \\
		\qquad \beta(\cos(\phi)-\sin(\phi)\Gamma^*)\Gamma_I \big) 
\end{array}
		& \begin{array}{ll}
				\sigma_I(\alpha-\beta)^2\cos(2\phi)\,,& \hat I\text{ odd} \\ 
				\sigma_I(\alpha+\beta)^2\cos(2\phi)\,,& \dim(V)-\hat I\text{ odd}
		  \end{array}\\
\end{array}
}
$\\}

\end{minipage}
\end{rem}

\begin{defn}\label{def10}
The pairs of Case 2b are called {\em pseudo-monomi\-al qua\-dra\-tic Clifford pairs of even} or {\em odd type} if $\hat I$ is even or odd.
\end{defn} 
\begin{rem}
We will briefly explain why the name pseudo-monomial pair is reasonable in this situation. From Table \ref{table2} we see that the form of the quadratic Clifford pairs is a bit different for $\hat I$ even or odd.
\begin{itemize}
\item For $\hat I$ even we may write $c=\pm d$ in a more symmetric way $c= (a_+\Pi^+ +a_-\Pi^-)\Gamma_I$ with $\alpha_\pm=\alpha\pm\beta$ and $\Pi^\pm=\frac{1}{2}(1\pm\Gamma^*)$ being the projections on the two spaces of half spinors. Then the two eigenvalues of $q_{e,f}$ again arise as the squares of difference and sum of the two coefficients. 
\item For $\hat I$ odd we consider $\sin(\phi)\neq0$ and write $\gamma=\cot(\phi)$ as well as $\bar\alpha=\sin(\phi)\alpha$ and $\bar\beta=\sin(\phi)\beta$. We may write $\gamma+\Gamma^*=(\gamma+\epsilon)-2\epsilon\Pi^{-\epsilon} $ with $\epsilon=\pm1$. Then $c=\bar\alpha(\gamma+\epsilon)\Gamma_I-2\bar\alpha\epsilon\Gamma_I\Pi^{+\epsilon}$ and $d=\bar\beta(\gamma+\epsilon)\Gamma_I-2\bar\beta\epsilon\Gamma_I\Pi^{-\epsilon}$. We insert this in (\ref{sum}) and see $q_{\bar\alpha\Gamma_I\Pi^{+\epsilon},\bar\beta\Gamma_I\Pi^{-\epsilon}}(v)=0$ as well as 
$\big\{s_{\bar\alpha\Gamma_I,\bar\beta\Gamma_I}\,,\, s_{\bar\alpha\Gamma_I\Pi^{+\epsilon},\bar\beta\Gamma_I\Pi^{-\epsilon}}\big\}=q_{\bar\alpha\Gamma_I,\bar\beta\Gamma_I}$ such that 
\begin{equation*}
q_{c,d}=(\gamma+\epsilon)^2 q_{\bar\alpha\Gamma_I,\bar\beta\Gamma_I}
		-2\epsilon(\gamma+\epsilon)q_{\bar\alpha\Gamma_I,\bar\beta\Gamma_I}
	   =(\gamma^2-\epsilon^2)q_{\bar\alpha\Gamma_I,\bar\beta\Gamma_I}\,.
\end{equation*}
Again, the eigenvalues are sum and difference of the two coefficients of the equivalent monomial pair $(c',d')=(\sqrt{\cos(2\phi)}\,\alpha\Gamma_I,\sqrt{\cos(2\phi)}\,\beta\Gamma_I)$.
\end{itemize}
\end{rem}
Proposition \ref{monomial} and Remark \ref{moregen2} yield the following proposition.
\begin{prop}\label{monomial-th}
Let $B$ be a non-degenerate symmetric linear map on a Euclidean vector space $V$ that has two different non-zero eigenvalues.
If $\dim(V)$ is odd then there exists one monomial quadratic Clifford pair with $q_{c,d}=B$. 
If $\dim(V)$ is even  then there exist two monomial quadratic Clifford pairs as well as a family of  pseudo-monomial quadratic Clifford pair(s). The family is parameterized by $\{\pm 1\}$ or $S^1$ if the multiplicity of each eigenvalue is even or odd, respectively. 
In any case there is a further discrete symmetry coming from the squares in (\ref{eigenvaluesmono}) and Table \ref{table2}.
\end{prop}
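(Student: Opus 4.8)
The plan is to read the assertion as a summary of Proposition \ref{monomial} (for the monomial pairs) and of Remark \ref{moregen2} together with Table \ref{table2} (for the pseudo-monomial ones), so that the only genuine work is to count the inequivalent pairs each construction produces and to track how this count depends on $\dim(V)$ and on the parity of the eigenvalue multiplicities. First I would fix the data: write the two eigenvalues of $B$ as $\lambda_1,\lambda_2\in\RR\setminus\{0\}$ with multiplicities $m_1,m_2\geq1$ and $m_1+m_2=\dim(V)$, pick an orthonormal eigenbasis $\{e_\mu\}$ of $V$ ordered so that $e_1,\dots,e_{m_1}$ span the $\lambda_1$-eigenspace, let $\Gamma$ be an irreducible representation of $\cl(V)$ with associated $\gamma$-matrices $\Gamma_\mu$, and set $I=\{1,\dots,m_1\}$, so that $0<\hat I=m_1<\dim(V)$.

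For the monomial part I would take $c=\alpha\Gamma_I$, $d=\beta\Gamma_I$; by (\ref{eigenvaluesmono}) the map $q_{c,d}$ acts on $\mathrm{span}(e_\mu:\mu\in I)$ by $\sigma_I(\alpha+(-1)^{m_1}\beta)^2$ and on the orthogonal complement by $\sigma_I(\alpha-(-1)^{m_1}\beta)^2$. Since $\CC$ admits square roots, one may choose $\alpha+(-1)^{m_1}\beta$ and $\alpha-(-1)^{m_1}\beta$ to be square roots of $\sigma_I\lambda_1$ and $\sigma_I\lambda_2$ and solve the resulting $2\times2$ linear system for $(\alpha,\beta)$; this realizes $q_{c,d}=B$, and any two solutions differ only by flipping these two square roots, that is, by the scalar equivalence $(c,d)\mapsto(-c,-d)$ together with the $\ZZ_2$ coming from the squares in (\ref{eigenvaluesmono}). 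This produces the pair with $\hat I=m_1$; repeating the argument with $I^\complement$ in place of $I$ (so $\hat I=m_2$, which is also strictly between $0$ and $\dim(V)$) yields a second monomial pair. When $\dim(V)$ is odd one has $\Gamma^*=\pm\ei$, hence $\Gamma_{I^\complement}$ is a scalar multiple of $\Gamma_I$ and the two pairs coincide up to equivalence, leaving exactly one — this is precisely the normalization $\hat I\leq\tfrac12\dim(V)$ recorded after Proposition \ref{monomial}; when $\dim(V)$ is even the monomials $\Gamma_I$ and $\Gamma_{I^\complement}$ are linearly independent, so the two pairs are genuinely distinct.

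For the pseudo-monomial part, which by Remark \ref{moregen2} occurs only in even dimension and only from Case 2b of Table \ref{table1}, I would use that $m_1$ and $m_2=\dim(V)-m_1$ have the same parity. If both are even I would invoke the first two rows of Table \ref{table2}, taking $c=\pm d=(\alpha+\beta\Gamma^*)\Gamma_I$ with $\hat I=m_1$ and solving $4\sigma_I\alpha^2$ and $4\sigma_I\beta^2$ for the prescribed eigenvalues (again possible over $\CC$, up to the sign ambiguity of Table \ref{table2}); the two rows, i.e.\ the choice $c=d$ or $c=-d$, give the family parameterized by $\{\pm1\}$. If both multiplicities are odd I would use the third row, $c=\alpha(\cos\phi+\sin\phi\,\Gamma^*)\Gamma_I$ and $d=\beta(\cos\phi-\sin\phi\,\Gamma^*)\Gamma_I$ with $\hat I=m_1$: for every $\phi$ with $\cos(2\phi)\neq0$ the equations $\sigma_I(\alpha-\beta)^2\cos(2\phi)=\lambda_1$ and $\sigma_I(\alpha+\beta)^2\cos(2\phi)=\lambda_2$ fix $(\alpha,\beta)$ up to the same discrete sign choices, so the resulting pairs are parameterized by $\phi\in S^1$, the excluded locus $\cos(2\phi)=0$ being exactly where $q_{c,d}$ degenerates and hence is irrelevant. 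In every instance the residual indeterminacy is precisely the discrete symmetry carried by the squares in (\ref{eigenvaluesmono}) and in Table \ref{table2}, which is the final clause of the statement. I expect the only delicate point to be this combinatorics of equivalences — separating the overall-scalar equivalence from the square-induced $\ZZ_2$ and using $\Gamma^*=\pm\ei$ to collapse the two monomial pairs in odd dimension; the possible negativity of $\sigma_I\lambda_i$ is harmless because the Clifford coefficients are allowed to be complex, and the remaining computations are exactly those already carried out for Proposition \ref{monomial} and in Section \ref{App1}.
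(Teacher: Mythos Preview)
Your proposal is correct and follows precisely the approach the paper takes: the paper states this proposition as an immediate consequence of Proposition~\ref{monomial} and Remark~\ref{moregen2} (with Table~\ref{table2}) without giving a separate proof, and what you have written is a faithful unpacking of that deduction---solving (\ref{eigenvaluesmono}) for the monomial pairs, collapsing $I$ and $I^\complement$ via $\Gamma^*=\pm\ei$ in odd dimension, and reading off the $\{\pm1\}$- resp.\ $S^1$-family from the three rows of Table~\ref{table2} according to the parity of $\hat I$.
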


\subsection{Linear solutions}

The second pair of Case 4b in Table \ref{table1} has an additional property: The square root $s_{c,d}$ of $q_{c,d}$ obeys $s_{c,d}\big|_{V}\subset V$. 
In the sense of \cite{santi1} the quadratic Clifford pairs $(c,d)$ for which the restriction to $V$ of the square root of $q_{c,d}$  is a map into $V$  are called linear pairs. 
We will relax this notion of linearity in the following definition and we will 
examine the pairs with this property. This yields a classification of linear quadratic Clifford pairs.
\begin{defn}
Consider the square root of $q_{a,b}$ given by $s_{a,b}:x\mapsto ax-xb$. We call a pair $(a,b)\in \cl(V)$ {\em linear} if 
$s_{a,b}$ restricted to $V\otimes \mathbbm{C}$ has image in $V\otimes \mathbbm{C}$.
\end{defn}
\begin{lem}\label{root}
The restriction of $s_{a,b}:x\mapsto ax-xb$ to $V\otimes \mathbbm{C}$ is a map into $V\otimes \mathbb{C}$ if and only if there exists scalars $\alpha_0,\alpha_1,\beta_0$ and $A\in \cl_2(V)$ such that $a=\alpha_0+A+\alpha_1\Gamma^*$ and $b=\beta_0+A-\alpha_1\Gamma^*$ if $\dim(V)$ is even, or $a=\alpha_0+A$ and $b=\beta_0+A$ if $\dim(V)$ is odd. 
\end{lem}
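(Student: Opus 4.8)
The plan is to characterize when the linear map $s_{a,b}(x)=ax-xb$ sends $V\otimes\CC$ into $V\otimes\CC$ by expanding $a$ and $b$ in the basis $\{\Gamma_I\}$ and tracking which Clifford degrees can appear in $s_{a,b}(v)$ for $v\in V$. Write $a=\sum_I a_I\Gamma_I$, $b=\sum_I b_I\Gamma_I$. For a single homogeneous term, $\Gamma_I v - v\,\epsilon\Gamma_I$ (the sign depending on parity) decomposes, via the Clifford relation, into a part of degree $\hat I+1$ (the ``wedge'' part) and a part of degree $\hat I-1$ (the ``contraction'' part); explicitly $\Gamma_I\Gamma_\mu = \Gamma_{I\mu} + (\text{contraction terms of degree }\hat I-1)$. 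The condition $s_{a,b}(v)\in V$ for all $v$ then forces, degree by degree, the coefficients $a_I,b_I$ for $\hat I\notin\{0,1,2,n\}$ to vanish, and imposes relations among the degree-$0$, degree-$2$, and (when $n$ is even) degree-$n$ parts.

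The key steps, in order, are: (1) Reduce to homogeneous components — since $s_{a,b}$ is linear and the grading is preserved appropriately, analyze each degree $k=\hat I$ separately. (2) For $2\le k\le n-2$: the top-degree piece $a_I\Gamma_{I\mu}$ (degree $k+1\ge 3$) and bottom-degree piece (degree $k-1\ge 1$) coming from $a_I\Gamma_I v$ must cancel against the corresponding pieces from $-v\,b_I\Gamma_I$; comparing the degree-$(k+1)$ components shows $a_I = \pm b_I$ while the degree-$(k-1)$ components (which have the \emph{opposite} relative sign because moving $v$ past $\Gamma_I$ flips one sign) force $a_I\mp b_I=0$, hence $a_I=b_I=0$ for all such $I$ unless $k-1\le 1$ or $k+1\ge n$. (3) This leaves possible degrees $k\in\{0,1,2\}$ and, when $n$ is even, $k=n$ (and $k=n-1$, which must be re-examined). (4) Degree $1$: a term $a_\mu\Gamma_\mu$ in $a$ or $b$ contributes to $s_{a,b}(v)$ a scalar (degree $0$) and a degree-$2$ piece; the scalar parts are killed automatically by the anticommutator structure when $v$ ranges over $V$, but the degree-$2$ contributions must vanish, which forces $a_\mu = 0 = b_\mu$ — so $a,b$ have no degree-one part. (5) Degree $0$: scalars $\alpha_0=a_\emptyset$, $\beta_0=b_\emptyset$ are unconstrained, contributing $(\alpha_0-\beta_0)v\in V$. (6) Degree $2$: for $A=\sum a_I\Gamma_I\in\cl_2(V)$, the commutator $[A,v]$ is always in $V$ (this is the standard fact that $\cl_2(V)$ acts as $\mathfrak{so}(V)$); but $Av+vA$ picks up a genuine degree-$3$ part (when $n\ge 3$), so to get $av-vb\in V$ we need the degree-$2$ parts of $a$ and $b$ to be \emph{equal}, say both $=A$, so that $Av - vA = [A,v]\in V$. (7) Top degree: if $n$ is even, $\Gamma^*$ is central up to sign — precisely, $\Gamma^* v = (-1)^{n-1} v\Gamma^* = -v\Gamma^*$ since $n$ is even — so $a_1\Gamma^* v - v(-a_1\Gamma^*) = \dots$; one checks $\Gamma^* v$ has degree $n-1$, and requiring its contribution to lie in $V$ forces $n-1=1$ (trivial) or the coefficients to combine so that $a$ has $+\alpha_1\Gamma^*$ and $b$ has $-\alpha_1\Gamma^*$; simultaneously the degree-$(n-1)$ coefficients must vanish. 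If $n$ is odd, $\Gamma^*$ is a scalar and is absorbed into $\alpha_0,\beta_0$, and there is no degree-$n$ freedom, giving the stated simpler form.

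I expect the main obstacle to be the careful bookkeeping in step (2) and step (6)–(7): precisely, showing that for $3\le k+1\le n-1$ the \emph{top} graded component of $a_I\Gamma_I v + (\text{stuff})$ cannot be cancelled except by $a_I=0$, which requires knowing that $\Gamma_{I\mu}$ for distinct-index $I$ is genuinely nonzero and linearly independent from the lower-degree terms — this is where one uses that $\{\Gamma_J\}$ is a basis and that wedge and contraction parts live in different graded pieces. The subtle point is the interface at $k=2$ versus $k=n-2$: when $n=4$ these coincide and the degree-$3$ piece is also the degree-$(n-1)$ piece, so the argument that kills degree $3$ must be compatible with the $\Gamma^*$-freedom; one resolves this by noting $\Gamma^*\in\cl_4(V)$ when $n=4$, not $\cl_3(V)$, so no conflict arises. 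Once the degrees are pinned down to $\{0,2\}$ (plus $\{n\}$ for even $n$), the converse direction is immediate: for $a=\alpha_0+A+\alpha_1\Gamma^*$, $b=\beta_0+A-\alpha_1\Gamma^*$ one computes $s_{a,b}(v) = (\alpha_0-\beta_0)v + [A,v] + \alpha_1(\Gamma^* v + v\Gamma^*) = (\alpha_0-\beta_0)v + [A,v]$, using $\Gamma^* v = -v\Gamma^*$ for even $n$, and $[A,v]\in V$, so the image lies in $V\otimes\CC$ as claimed.
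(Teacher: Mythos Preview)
Your approach is essentially the paper's: expand $a=\sum_I a_I\Gamma_I$, $b=\sum_I b_I\Gamma_I$, compute $s_{a,b}(\Gamma_\mu)$, and read off the degree constraints from the wedge ($\Gamma_{I\cup\mu}$) and contraction ($\Gamma_{I\setminus\mu}$) parts. The paper packages this via the sign $\theta^\mu_I=\pm1$ to obtain directly the pair of conditions $a_I+(-1)^{\hat I}b_I=0$ (from $\mu\in I$, $\widehat{I\setminus\mu}\neq1$) and $a_I-(-1)^{\hat I}b_I=0$ (from $\mu\notin I$, $\widehat{I\cup\mu}\neq1$), which is exactly your ``opposite relative sign'' observation in step (2).

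One slip to correct in step (4): the scalar contribution from a degree-$1$ part is \emph{not} killed automatically. For $v=\Gamma_\nu$ the degree-$0$ piece of $s_{a,b}(\Gamma_\nu)$ coming from the degree-$1$ parts of $a,b$ is $b_\nu-a_\nu$, which must vanish since scalars are not in $V$; this gives $a_\nu=b_\nu$. The degree-$2$ piece is $\sum_{\mu\neq\nu}(a_\mu+b_\mu)\Gamma_{\mu\nu}$, giving only $a_\mu+b_\mu=0$. It is the combination of the two conditions that forces $a_\mu=b_\mu=0$, not the degree-$2$ condition alone.
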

\begin{rem}
$s_{a,b}\big|_V$ in Lemma \ref{root} is independent of the terms proportional to $\Gamma^*$. Therefore, we may consider $\alpha_1=0$ when we are only interested in this restriction. Furthermore, we may consider $\alpha_0=0$ because of $s_{a,b}=s_{\alpha+a,\alpha+b}$ for all scalars $\alpha$. 
\end{rem}
\begin{proof}
Consider $a=\sum_Ia_I\Gamma_I$ and $b=\sum_Ib_I\Gamma_I$ and define $\theta_I^\mu=1$ if $\mu\not\in I$ and $\theta^\mu_I=-1$ if $\mu\in I$. Then  $s_{a,b}\Gamma_\mu=\sum_I(a_I-\theta^\mu_I(-1)^{\hat I} b_I)\Gamma_I\Gamma_\mu$ and we write\footnote{$\eta^\mu_I$ denotes the sign of the permutation that indicates the place of $\mu$ within the multi-index $I$.}
\[
s_{a,b}\Gamma_\mu=\sum_{\mu\in I}\eta_I^\mu(a_I+(-1)^{\hat I}b_I)\Gamma_{I\setminus \mu}
				 +\sum_{\mu\not\in I}(a_I-(-1)^{\hat I}b_I)\Gamma_{I\cup\mu}\,.
\]
The condition $s_{a,b}\Gamma_\mu\in V$ for all $\mu$ is equivalent to
(a) $a_I+(-1)^{\hat I}b_I=0$ for all $I$ such that there exists a $\mu$ with $\mu\in I$ and $\widehat{I\setminus\mu}\neq 1$ and 
(b) $a_I-(-1)^{\hat I}b_I=0$ for all $I$ such that there exists a $\mu$ with $\mu\not\in I$ and $\widehat{I\cup\mu}\neq 1$. 

If $\dim(V)= 2m$ is even then this is the same as
(a) $a_I+(-1)^{\hat I}b_I=0$ for all $I$ with $\hat I\neq 0,2$  and 
(b) $a_I-(-1)^{\hat I}b_I=0$ for all $I$ such $\hat I\neq 0,2m$.
Therefore, 
$a_I=b_I=0$ for all $\hat I\neq 0,2,2m$. Furthermore, there is no restriction for $\hat I=0$, i.e.\ for the scalar part. For $\hat I=2$ the coefficients have to obey $a_{(2)}=b_{(2)}$ and for $\hat I=2m$ the  coefficients must obey $a_{(2m)}=-b_{(2m)}$. 
For $\dim(V)=2m+1$ we only need to consider $\hat I\leq m$ due to $\Gamma^*\sim\mathbbm{1}$ such that the same calculations as before yield $a_I=b_I=0$ for $\hat I\neq 0,2$, no restriction for $\hat I=0$, and  $a_{(2)}=b_{(2)}$.
\end{proof}

If we identify $\cl_2(V)$ in the usual way with the skew-symmetric endomorphism of $V\otimes \mathbbm{C}$, i.e.
\begin{equation}
(A_{\mu\nu}) \mapsto -\frac{1}{4}\sum_{\mu,\nu}A_{\mu\nu}\Gamma^{\mu\nu}\,,
\end{equation} 
the action of $s_{A,A}$ on $x\in V\otimes \CC$ is just the action of the endomorphism $A$ on $x$. 
We write $s_{A,A}(x)=Ax-xA=[A,x]=A(x)$ in this special case. In particular, the restriction of $s_{A,A}$ to $V\otimes \mathbbm{C}$ is skew-symmetric. 

\begin{prop}\label{prop2}
A quadratic Clifford pair is linear if and only if it is of the form $(0,\beta)$ with a scalar $\beta$ or $(A,A)$ with $A=A_0+iA_1\in \cl_2(V)$ such that $A_0$ and $A_1$ anti-commute when considered as endomorphisms of $V$.
\end{prop}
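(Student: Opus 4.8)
The plan is to feed the output of Lemma~\ref{root} into Definition~\ref{cp}. By Lemma~\ref{root} a linear pair has the form $a=\alpha_0+A+\alpha_1\Gamma^*$, $b=\beta_0+A-\alpha_1\Gamma^*$ in even dimension (and $a=\alpha_0+A$, $b=\beta_0+A$ in odd dimension), with scalars $\alpha_0,\alpha_1,\beta_0$ and $A\in\cl_2(V)$. Using Remark~\ref{a=0} I shift both entries by $-\alpha_0$, so $\alpha_0=0$; and since $\Gamma^*$ anticommutes with $V$ in even dimension (and is scalar in odd dimension), the two $\Gamma^*$–contributions to $s_{a,b}(v)=av-vb$ cancel for $v\in V$, hence $\alpha_1$ does not affect $s_{a,b}\big|_V$ and therefore not $q_{a,b}\big|_V=\bigl(s_{a,b}\big|_V\bigr)^2$ either. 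Thus it is enough to decide, for $a=A\in\cl_2(V)$ and $b=\beta+A$ with $\beta\in\CC$, when $(A,\beta+A)$ is a quadratic Clifford pair; the expected answer is: exactly when $A=0$ and $\beta^2\in\RR$ — the pair $(0,\beta)$ — or when $\beta=0$ and, writing $A=A_0+iA_1$ with $A_0,A_1$ real elements of $\cl_2(V)$, the endomorphisms $A_0$ and $A_1$ anticommute — the pair $(A,A)$.

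Next I would pass to operators on $V\otimes\CC$. By the identification recalled before the statement, $s_{A,\beta+A}$ acts on $V\otimes\CC$ as $A-\beta\,\id$. Splitting $A=A_0+iA_1$ (with $A_0,A_1$ real skew–symmetric) and $\beta=\beta_0+i\beta_1$, and setting the real operators $T_0=A_0-\beta_0\,\id$, $T_1=A_1-\beta_1\,\id$, one has $q_{A,\beta+A}\big|_V=(T_0+iT_1)^2=(T_0^2-T_1^2)+i\{T_0,T_1\}$. Condition~(i) of Definition~\ref{cp} (that $q$ maps $V$ into $V$) is then precisely $\{T_0,T_1\}=0$; granting this, condition~(ii) ($g$–symmetry of $T_0^2-T_1^2$) is the vanishing of its skew part, namely $\beta_0A_0=\beta_1A_1$. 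Expanding $\{T_0,T_1\}=0$ and separating its symmetric and skew parts (using that $\id$ is symmetric, $A_0,A_1$ are skew, and $\{A_0,A_1\}$ is symmetric) gives $\{A_0,A_1\}=-2\beta_0\beta_1\,\id$ and $\beta_1A_0+\beta_0A_1=0$. So $(A,\beta+A)$ is a quadratic Clifford pair if and only if, as endomorphisms of $V$,
\[
\beta_0A_0=\beta_1A_1,\qquad \beta_1A_0+\beta_0A_1=0,\qquad \{A_0,A_1\}=-2\beta_0\beta_1\,\id.
\]

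Finally I would run the case split on $\beta$. If $\beta\neq0$, a suitable real combination of the first two relations ($\beta_0$ times the first plus $\beta_1$ times the second) yields $(\beta_0^2+\beta_1^2)A_0=0$, hence $A_0=0$; then those same two relations give $(\beta_0,\beta_1)\neq 0\Rightarrow A_1=0$, so $A=0$, the third relation collapses to $\beta_0\beta_1=0$, i.e.\ $\beta^2=\beta_0^2-\beta_1^2\in\RR$, and the pair is $(0,\beta)$. If $\beta=0$, the first two relations are vacuous and the third reads $\{A_0,A_1\}=0$, i.e.\ $A_0$ and $A_1$ anticommute, and the pair is $(A,A)$. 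The converse inclusions are immediate checks: for $(0,\beta)$ with $\beta^2\in\RR$ one has $s_{0,\beta}(v)=-\beta v\in V\otimes\CC$ and $q_{0,\beta}\big|_V=\beta^2\id$; for $(A,A)$ with $\{A_0,A_1\}=0$ one has $s_{A,A}(v)=A(v)\in V\otimes\CC$ and $q_{A,A}\big|_V=A_0^2-A_1^2$, which is $g$–symmetric. The only genuinely delicate point is the bookkeeping with the real structure of $V\otimes\CC$ — keeping straight which operators are real, and the symmetric/skew decompositions — together with not overlooking the degenerate sub-cases $\beta_0=0$ or $\beta_1=0$ in the final case split; the remainder is routine linear algebra.
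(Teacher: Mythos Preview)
Your proof is correct and follows essentially the same route as the paper: reduce via Lemma~\ref{root} and the remark following it to the pair $(A,\beta+A)$, then analyse $q_{A,\beta+A}\big|_V$ to separate the cases $\beta=0$ and $A=0$. Your treatment is in fact more careful than the paper's: you explicitly decompose $\beta=\beta_0+i\beta_1$ and track conditions~(i) and~(ii) separately via the real operators $T_0,T_1$, whereas the paper argues more tersely that the skew term $\beta[A,\cdot]$ must vanish. In particular you correctly extract the side condition $\beta^2\in\RR$ (i.e.\ $\beta$ real or purely imaginary) for the pair $(0,\beta)$, which the proposition's statement leaves implicit and the paper's proof only partially addresses (``condition~(i) is fulfilled for $\beta$ real'').
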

\begin{proof}
Consider a linear pair $(a,b)$ given by $a=A$ and $b=\beta+A$ this yields
$q_{a,b}(x)=\beta x + q_{A,A}(x) + \beta [A,x]$. This is symmetric if and only if $\beta=0$ or $A=0$.
Moreover, for $A=0$ condition (i) is fulfilled for $\beta$ real. Now consider $\beta=0$ such that $a=b=A_0+iA_1$ is a complex skew-symmetric endomorphism. Then $q_{A,A}$, i.e.\ the endomorphism $A^2=(A_0^2-A_1^2)+i\{A_0,A_1\}$, preserves the real vector space $V$ if and only if the real skew-symmetric endomorphisms $A_0$ and $A_1$ anti-commute. For a characterization of skew-symmetric endomorphisms with this special feature see \cite{Putt}, for example.
\end{proof}

\begin{rem}
Two particular cases in Proposition \ref{prop2} are provided by $A$ skew-symmetric and real or skew-symmetric and purely imaginary. In this situation $q_{A,A}$ is negative or positive (semi-)definite. In particular, all eigenvalues come in pairs such that in the odd dimensional case $q_{A,A}$ is always degenerate.
\end{rem}

\begin{coro}\label{linear}
Given a symmetric linear map $B$ on the vector space $V$ such that all nonzero eigenvalues of $B$ have even multiplicities. Then there exists $A\in  \cl_2(V)$ such that $(A,A)$ yields a linear quadratic Clifford pair associated to $B$.
\end{coro}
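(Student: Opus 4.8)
The plan is to reduce the statement to Proposition \ref{prop2}: it suffices to exhibit real skew-symmetric endomorphisms $A_0,A_1$ of $V$ that anticommute and satisfy $A_0^2-A_1^2=B$. Indeed, put $A:=A_0+iA_1$; this is a complex skew-symmetric endomorphism of $V\otimes\CC$, hence an element of $\cl_2(V)$, and Proposition \ref{prop2} then tells us that $(A,A)$ is a linear quadratic Clifford pair. Since $\{A_0,A_1\}=0$, the map $q_{A,A}$ acts on $V\otimes\CC$ as $A^2=(A_0^2-A_1^2)+i\{A_0,A_1\}=B$, so this pair is associated to $B$ and we are done.

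To construct $A_0$ and $A_1$ I would first diagonalize $B$ and split $V$ orthogonally as $V=V_+\oplus V_-\oplus V_0$, the spans of the eigenvectors with positive, negative, and zero eigenvalue. By hypothesis every nonzero eigenvalue has even multiplicity, so $\dim V_+$ and $\dim V_-$ are even. On $V_-$ the restriction $B|_{V_-}$ is negative definite with eigenvalues $-\mu^2$, $\mu>0$, of even multiplicity; choosing within each eigenspace an orthonormal basis grouped into pairs $(f_i,h_i)$ and setting $A_0 f_i=\mu h_i$, $A_0 h_i=-\mu f_i$ defines a real skew-symmetric endomorphism of $V_-$ whose square is $B|_{V_-}$, which I extend by zero on $V_+\oplus V_0$. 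Symmetrically, $-B|_{V_+}$ is negative definite with even multiplicities, so the same recipe produces a real skew-symmetric $A_1$ with $A_1^2=-B|_{V_+}$ on $V_+$, extended by zero on $V_-\oplus V_0$.

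The step I expect to require the most care is ensuring that $A_0$ and $A_1$ anticommute as endomorphisms of the whole space $V$, not merely on each block; this is the reason for placing $A_0$ on the negative part and $A_1$ on the positive part, so that they have disjoint supports. Concretely $\operatorname{im}A_1\subset V_+\subset\ker A_0$ gives $A_0A_1=0$, and $\operatorname{im}A_0\subset V_-\subset\ker A_1$ gives $A_1A_0=0$, hence $\{A_0,A_1\}=0$. It then remains to check $A_0^2-A_1^2=B$ summand by summand: on $V_-$ this reads $B-0=B$, on $V_+$ it reads $0-(-B)=B$, and on $V_0$ it reads $0-0=0=B$. Combined with the first paragraph this proves the claim, and one sees that the even-multiplicity hypothesis enters exactly once, in securing the skew-symmetric square roots on $V_\pm$ — which is also the obstruction that makes the hypothesis necessary for a pair of the form $(A,A)$.
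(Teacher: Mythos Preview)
Your argument is correct and is essentially the paper's own proof: both diagonalize $B$, place a real skew-symmetric square root $A_0$ on the negative-eigenvalue block and an imaginary one $iA_1$ on the positive-eigenvalue block (the paper's $\Delta_0$ and $\Delta_1$), and rely on the disjoint supports to kill $\{A_0,A_1\}$. The paper writes this down in explicit $2\times 2$ block form while you phrase it more intrinsically, but the construction and the invocation of Proposition~\ref{prop2} are identical.
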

\begin{proof}
We consider an orthonormal basis $\mathcal{W}$ of $(V,g)$ with induced isomorphism $\Phi_\mathcal{W}:V\to\mathbbm{R}^n$ such that $\Phi_\mathcal{W}\circ B\circ\Phi_\mathcal{W}^{-1}=\Delta^2$ is a diagonal matrix. The square root $\Delta$ is defined by $\Delta=\Delta_0 + i\Delta_1$ with 
$\Delta_0={\rm diag}\left(\Sigma_1,\ldots,\Sigma_r,\mathbf{0}_{n-2r}\right)$ and 
$\Delta_1={\rm diag}\left(\mathbf{0}_{2r},\Sigma_{r+1}, \ldots,\Sigma_{r+s},\mathbf{0}_{n-2r-2s}\right)$ where  
$\Sigma_j={\small\renewcommand{\arraystretch}{1.0}
	\begin{pmatrix}0&\lambda_j\\ -\lambda_j&0\end{pmatrix}}$ 
for $1\leq j\leq r$ and 
$\Sigma_{r+k}={\small\renewcommand{\arraystretch}{1.0}
	\begin{pmatrix}0&\kappa_k\\-\kappa_k&0\end{pmatrix}}$ 
for $1\leq k\leq s$ and $\lambda_j,\kappa_k>0$. 
Let $A:=\Phi_\mathcal{W}^{-1}\Delta\Phi_\mathcal{W}$ and identify this skew-symmetric map as usual with $A\in \cl_2(V)$. This element obeys $q_{A,A}=B$.
\end{proof}

\begin{rem}
Let $(a,b)$  be a quadratic Clifford pair with $a=a'$ and $b=\beta+b'$ and scalar part $\beta\neq0$. Then either $(a',b')$ is not a quadratic Clifford pair or it doesn't contribute to $q_{a,b}\big|_V$. 

Suppose $(a',b')$ is a quadratic Clifford pair and recall $q_{a,b}(x) = \beta^2x +q_{a',b'}(x) -2\beta\,s_{a',b'}(x)$. From Lemma \ref{root} we know that $s_{a',b'}$ is a self-map of $V$ if and only only if  $a'=b'=A$ if $\dim(V)$ is odd or $a'=A+\alpha_1\Gamma^*$ and $b'=A-\alpha_1\Gamma^*$ if $\dim(V)$ is even. In this situation $s_{a',b'}\big|_V=s_{A,A}\big|_V$ is skew-symmetric, such that $A=0$.
\end{rem}

\begin{coro}\label{Dim3}
In dimension $n=3$ any symmetric linear map that can be realized by quadratic Clifford pairs is either proportional to $\mathbbm{1}$ or degenerate with a non-zero eigenvalue of multiplicity two. 
\end{coro}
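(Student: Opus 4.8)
The plan is to read off the conclusion from the classification of linear quadratic Clifford pairs in Proposition \ref{prop2}, and then to push the dimension count to the point where the special feature of $n=3$ forces the two alternatives. By Proposition \ref{prop2} any (linear) quadratic Clifford pair in play here is either $(0,\beta)$ for a scalar $\beta$ --- in which case $q_{0,\beta}=\beta^2\mathbbm{1}$ is already proportional to $\mathbbm{1}$ --- or of the form $(A,A)$ with $A=A_0+iA_1\in\cl_2(V)$, where $A_0$ and $A_1$ anticommute as real skew-symmetric endomorphisms of $V$, and then $q_{A,A}=A^2$ coincides with the real symmetric map $A_0^2-A_1^2$. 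So the whole problem reduces to describing the squares of such complex skew-symmetric endomorphisms once $\dim V=3$.

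First I would specialize to $n=3$ and identify $\cl_2(V)$ with $\mathfrak{so}(V)\cong(\RR^3,\times)$ via $\vec u\mapsto\vec u\times(\cdot)$. A short computation with the vector triple product shows that the anticommutator of $\vec u\times(\cdot)$ and $\vec v\times(\cdot)$ can vanish only if $\vec u\cdot\vec v=0$, and, evaluating that anticommutator on $\vec u$, only if $\vec u=0$ or $\vec v=0$. Hence $A_0=0$ or $A_1=0$, so up to the scalar factor $i$ the endomorphism $A$ is a single real skew-symmetric one and $q_{A,A}=\pm A_j^2$ for some real skew-symmetric $A_j$. Since a real skew-symmetric endomorphism of a $3$-dimensional space has spectrum $\{0,i\mu,-i\mu\}$, the map $q_{A,A}=\pm A_j^2$ has spectrum $\{0,\mp\mu^2,\mp\mu^2\}$: it is the zero map when $\mu=0$, hence proportional to $\mathbbm{1}$, and otherwise it is degenerate with a single non-zero eigenvalue of multiplicity two. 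Together with the case $(0,\beta)$ this gives exactly the claimed dichotomy.

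The one step that is not purely formal is the rigidity fact that two anticommuting real skew-symmetric endomorphisms of $\RR^3$ cannot both be non-zero; this is the triple-product computation above, and it is precisely what makes $n=3$ special --- in higher dimensions such anticommuting pairs exist, which is what allows Corollary \ref{linear} to realize maps with larger repeated eigenspaces. As an alternative to the hand computation one may invoke the description of anticommuting skew-symmetric endomorphisms in \cite{Putt}, already cited in the proof of Proposition \ref{prop2}.
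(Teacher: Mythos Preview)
Your argument is correct and is precisely the intended one: the corollary sits inside the section on linear solutions and is a direct consequence of Proposition~\ref{prop2}, with the one non-formal step being that two anticommuting real skew-symmetric endomorphisms of $\RR^3$ cannot both be nonzero, which you verify cleanly via the triple product. The paper gives no separate proof, and you correctly read the implicit restriction to linear pairs from context --- without it the statement would fail already for monomial pairs by Proposition~\ref{monomial-th}.
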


\subsection{Generalized monomial solutions}

We generalize the results obtained so far and show that we can realize symmetric maps $B$ with more than two eigenvalues. The proof of the following proposition is given in Section \ref{App1}, too.
\begin{prop}\label{hom}
Consider $c=\sum_\alpha c_\alpha\Gamma_{I_\alpha}$ and $d=\sum_\alpha d_\alpha\Gamma_{I_\alpha}$ with $d_\alpha=\ep_\alpha c_\alpha$ for $\ep_\alpha\in\{\pm1\}$, $I_\alpha\cap I_\beta=\emptyset$ for $\alpha\neq\beta$, and 
$\big(\bigcup_\alpha I_\alpha\big)^\complement=\emptyset$. Furthermore, let the amount of summands in $c$ be bigger than two.
Then $(c,d)$ is a non-degenerate quadratic Clifford pair if and only if $\hat I_\alpha$ is even for all $\alpha$ in even dimension and $\hat I_\alpha$ is even up to one exception in odd dimension. Furthermore we have  $\ep_\alpha=(-1)^{\hat I_\alpha}$. 
In this situation the eigenvalues of $q_{c,d}$ are given by $4\sigma_{I_\alpha}c_\alpha^2$ with multiplicities $\hat I_\alpha$.
\end{prop}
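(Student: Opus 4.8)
The plan is to prove both directions by evaluating $q_{c,d}$ on the orthonormal basis and sorting the outcome according to the lengths of the Clifford words that appear. Everything rests on the disjointness of the blocks: one has $\Gamma_{I_\alpha}^2=\sigma_{I_\alpha}\mathbbm{1}$ and $\Gamma_{I_\alpha}\Gamma_{I_\beta}=(-1)^{\hat I_\alpha\hat I_\beta}\Gamma_{I_\beta}\Gamma_{I_\alpha}$ for $\alpha\neq\beta$, hence
\[
c^2=\Big(\sum_\alpha\sigma_{I_\alpha}c_\alpha^2\Big)\mathbbm{1}+\sum_{\alpha<\beta}c_\alpha c_\beta\big(1+(-1)^{\hat I_\alpha\hat I_\beta}\big)\Gamma_{I_\alpha}\Gamma_{I_\beta},
\]
and $d^2$ is the same with an extra factor $\ep_\alpha\ep_\beta$ in the last sum (the scalar part agrees since $d_\alpha^2=c_\alpha^2$). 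For a fixed $\mu$ with $\mu\in I_\gamma$ one inserts this, the commutation rules $\Gamma_\mu\Gamma_{I_\alpha}=(-1)^{\hat I_\alpha}\Gamma_{I_\alpha}\Gamma_\mu$ ($\mu\notin I_\alpha$) and $\Gamma_\mu\Gamma_{I_\gamma}=(-1)^{\hat I_\gamma+1}\Gamma_{I_\gamma}\Gamma_\mu$, and the sandwich identities $\Gamma_{I_\gamma}\Gamma_\mu\Gamma_{I_\gamma}=(-1)^{\hat I_\gamma+1}\sigma_{I_\gamma}\Gamma_\mu$, $\Gamma_{I_\alpha}\Gamma_\mu\Gamma_{I_\alpha}=(-1)^{\hat I_\alpha}\sigma_{I_\alpha}\Gamma_\mu$ ($\alpha\neq\gamma$) into $q_{c,d}(e_\mu)=c^2e_\mu+e_\mu d^2-2ce_\mu d$. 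The result is a sum of three kinds of terms: a multiple $\lambda_\gamma e_\mu$ of $e_\mu$ with $\lambda_\gamma=2\sigma_{I_\gamma}c_\gamma^2\big(1+(-1)^{\hat I_\gamma}\ep_\gamma\big)+2\sum_{\alpha\neq\gamma}\sigma_{I_\alpha}c_\alpha^2\big(1-(-1)^{\hat I_\alpha}\ep_\alpha\big)$; ``two-block'' words $\pm\Gamma_{(I_\gamma\setminus\mu)\cup I_\beta}$ with $\beta\neq\gamma$, whose coefficient becomes $-2c_\gamma c_\beta(-1)^{\hat I_\beta}\big(1-(-1)^{\hat I_\gamma\hat I_\beta}\big)$ once $\ep_\alpha=(-1)^{\hat I_\alpha}$ holds; and ``three-block'' words $\pm\Gamma_{\{\mu\}\cup I_b\cup I_c}$ with $b,c\neq\gamma$, $b\neq c$, whose coefficient vanishes identically as soon as $\ep_b=(-1)^{\hat I_b}$ and $\ep_c=(-1)^{\hat I_c}$. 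When $n$ is odd one must also remember $\Gamma^*\sim\mathbbm{1}$, so a word of co-length $0$ or $1$ represents a scalar or a vector.

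For the ``if'' direction assume the stated parity pattern together with $\ep_\alpha=(-1)^{\hat I_\alpha}$; since $\sum_\alpha\hat I_\alpha=n$, the pattern ``all even'' can occur only for $n$ even and ``all even but one'' only for $n$ odd, so the hypothesis is consistent. Because at most one block is odd, every pair $\{\gamma,\beta\}$ contains at most one odd block, so $(-1)^{\hat I_\gamma\hat I_\beta}=1$ and both the two-block and the three-block coefficients vanish. Thus $q_{c,d}(e_\mu)=\lambda_\gamma e_\mu$, and since $\ep_\alpha(-1)^{\hat I_\alpha}=1$ one gets $\lambda_\gamma=4\sigma_{I_\gamma}c_\gamma^2$. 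Hence simultaneously $q_{c,d}V\subset V$; $q_{c,d}$ is diagonal in the orthonormal basis adapted to the partition, hence $g$-symmetric; it restricts to $\mathrm{span}(e_\mu:\mu\in I_\alpha)$ as $4\sigma_{I_\alpha}c_\alpha^2\cdot\id$, which gives the asserted eigenvalues and multiplicities; and it is non-degenerate because every $c_\alpha\neq0$.

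For the ``only if'' direction — which I expect to be the main obstacle — suppose $(c,d)$ is a non-degenerate quadratic Clifford pair with at least three nonzero summands. One first extracts $\ep_\alpha=(-1)^{\hat I_\alpha}$ for all $\alpha$: a third block is always available, so every three-block coefficient must vanish, and a short analysis of that condition shows that a violation of $\ep_\alpha=(-1)^{\hat I_\alpha}$ forces $\lambda_\delta=0$ on some block $I_\delta$, contradicting non-degeneracy. With this in hand the two-block coefficient $-2c_\gamma c_\beta(-1)^{\hat I_\beta}\big(1-(-1)^{\hat I_\gamma\hat I_\beta}\big)$ is nonzero precisely when both $\hat I_\gamma$ and $\hat I_\beta$ are odd. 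Suppose there is an odd block; in even dimension there must then be at least two (since $\sum_\alpha\hat I_\alpha=n$ is even), while in odd dimension one wants to rule out a second. If some odd block $I_\gamma$ has $\hat I_\gamma\geq 3$ and there is a second odd block $I_\beta$, then for $\mu\in I_\gamma$ the word $\Gamma_{(I_\gamma\setminus\mu)\cup I_\beta}$ has odd length between $3$ and $n-2$ — too long to lie in $V$ and, by the parity count, unable to wrap around via $\Gamma^*$ into $V$ or into $\mathbb{C}\mathbbm{1}$ — yet its coefficient is nonzero, so $q_{c,d}(e_\mu)\notin V$, a contradiction. If instead every odd block has $\hat I_\alpha=1$, then for $\mu$ in such a block the surviving two-block terms make $q_{c,d}$ act on the span of the one-dimensional odd blocks as $-4\,vv^{t}$ with $v=\sum_{\gamma\ \mathrm{odd}}c_\gamma e_{\mu_\gamma}$, which is degenerate as soon as there are two odd blocks — again contradicting non-degeneracy. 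Hence in even dimension there is no odd block, and in odd dimension at most one.

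The delicate part is precisely this exhaustive case distinction together with the sign bookkeeping — which blocks are odd, where $\mu$ sits relative to them, and, only in odd dimension, which of the long Clifford words produced by the expansion collapse into $V$ or into $\mathbb{C}\mathbbm{1}$ through $\Gamma^*\sim\mathbbm{1}$; once the cases are organized, each of the individual computations ($c^2$, the sandwich identities, the cancellations of the two- and three-block coefficients) is a routine application of the Clifford relation.
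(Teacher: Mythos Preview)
Your overall strategy---expand $q_{c,d}(e_\mu)$, separate the degree-one piece from the mixed $\Gamma_{I_\alpha I_\beta}\Gamma_\mu$ pieces, and force the latter to vanish---is exactly the paper's strategy, and your ``if'' direction is correct.

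The ``only if'' direction, however, has a genuine gap in the step where you extract $\ep_\alpha=(-1)^{\hat I_\alpha}$. You argue that the three-block coefficients alone, together with non-degeneracy, force this sign rule. They do not. Take all $\ep_\alpha=+1$ and suppose at least two blocks are odd. Then every three-block coefficient (subcase $\theta^\mu_\alpha=\theta^\mu_\beta=1$) vanishes: for two even blocks it is $(1-\ep_\alpha)(1-\ep_\beta)=0$; for one odd and one even it is $(1-\ep_{\text{even}})(1+\ep_{\text{odd}})=0$; for two odd blocks it is $\ep_\beta-\ep_\alpha=0$. Yet for $\mu$ in an odd block $I_\gamma$ your diagonal coefficient becomes $\lambda_\gamma=4\sum_{\alpha\text{ odd},\,\alpha\neq\gamma}\sigma_{I_\alpha}c_\alpha^2$, which is generically nonzero when there is a second odd block. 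So three-block vanishing plus non-degeneracy does \emph{not} rule out this configuration; it is only the two-block coefficients that kill it.

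The paper avoids this detour by using the two-block subcases (where $\mu\in I_\alpha$ or $\mu\in I_\beta$) on the same footing as the three-block one. For a fixed pair $(\alpha,\beta)$ with at least three summands all three placements of $\mu$ occur, and one reads off: if $\hat I_\alpha,\hat I_\beta$ are both even the three subcases give $(1-\ep_\alpha)(1-\ep_\beta)=(1+\ep_\alpha)(1-\ep_\beta)=(1-\ep_\alpha)(1+\ep_\beta)=0$, hence $\ep_\alpha=\ep_\beta=1$; the mixed-parity case similarly forces $\ep_{\text{even}}=1$, $\ep_{\text{odd}}=-1$; and if both are odd the subcases give $\ep_\alpha=\ep_\beta$ and $\ep_\alpha=-\ep_\beta$, which is impossible. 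Thus $\ep_\alpha=(-1)^{\hat I_\alpha}$ drops out directly and two odd blocks are excluded in one stroke, without invoking non-degeneracy or your separate case split on the size of the odd blocks. Your later rank-one observation for the all-size-one situation is correct and does plug a subtlety the paper passes over quickly, but it only becomes necessary because your route to the sign rule was incomplete.
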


\begin{rem}
\begin{itemize}
\item
Suppose $c$ and $d$ decompose into two summands such that the subcases we denoted by a) in Table \ref{table:app} do not appear. Then the conditions on the signs that describe $d$ are less restrictive and we recover the pseudo-monomial solutions of Table \ref{table2}.
\item
Because there are no restrictions to the values of $c_\alpha$, some of them may coincide. So the result in Proposition \ref{monomial-th} is extended by generalized monomial pairs with exactly two different parameters.
\item
For $\hat I_\alpha=2$ for all $\alpha$ we have $c=d\in\mathfrak{so}(V)$ such that we are left with a linear pair described in Proposition \ref{prop2}.
\item
For $\hat I$ even we have $[\Gamma_I,\Gamma_\mu]=2k\delta_{\mu[\nu_1}\Gamma_{\nu_2\ldots\nu_{\hat I}]}=2\Gamma_\mu\lfloor \Gamma_I$.  Therefore, in situation of Proposition \ref{hom} the square root of $q_{c,d}$ is given by 
$
s_{c,c}(v)=2 v\lfloor c
$.
\end{itemize}
\end{rem}
We may further generalize the above result in the even dimensional case and allow pseudo-monomial summands in the above construction. 
\begin{prop}\label{hom-p}
Consider $c,d$ with 
\[
c=\sum_\alpha c_\alpha\Gamma_{I_\alpha}+\Gamma^*\sum_\alpha\hat c_\alpha\Gamma_{I_\alpha}
\text{ and }
d=\sum_\alpha \ep_\alpha c_\alpha\Gamma_{I_\alpha}+\Gamma^*\sum_\alpha\hat\ep_\alpha\hat c_\alpha\Gamma_{I_\alpha}
\]
and  $\ep_\alpha=-\hat\ep_\alpha=(-1)^{\hat I_\alpha}$. Then $(c,d)$ yields a non-degenerate quadratic Clifford pair if and only if it is of one of the following types:
\begin{enumerate}
\item\label{item:x}   
All summands are even with $\hat c_\alpha c_\alpha=0$ for all $\alpha$, i.e.\ only one of $\Gamma_{I_\alpha}$ and $\Gamma^*\Gamma_{I_\alpha}$ contributes to $c$ and $d$.  
\item\label{item:y}   
Two summands are odd, e.g.\ $\hat I_{\alpha_0},\hat I_{\alpha_1}$, and all others obey $\hat I_\alpha$ even with $\hat c_\alpha=0$ as well as $c_{\alpha_0}c_{\alpha_1}=\hat c_{\alpha_0}\hat c_{\alpha_1}$. 
For example
\begin{align*} 
c&=c_{\alpha_0}(\gamma+\Gamma^*)\Gamma_{I_{\alpha_0}} 
+c_{\alpha_1}(1+\gamma\Gamma^*)\Gamma_{I_{\alpha_1}} 
+ \sum_{\hat I_\alpha\,\text{even}} c_\alpha\Gamma_{I_\alpha}\,,\\
d&=-c_{\alpha_0}(\gamma-\Gamma^*)\Gamma_{I_{\alpha_0}} 
-c_{\alpha_1}(1-\gamma\Gamma^*)\Gamma_{I_{\alpha_1}} 
+ \sum_{\hat I_\alpha\,\text{even}} c_\alpha\Gamma_{I_\alpha}
\end{align*}
if $\hat c_{\alpha_0}, c_{\alpha_1}\neq0$.
\end{enumerate}
\end{prop}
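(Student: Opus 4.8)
The plan is to reduce the analysis to the already-established results by exploiting the orthogonality $I_\alpha\cap I_\beta=\emptyset$ and the decomposition formula (\ref{sum}). First I would write $c=\sum_\alpha c_\alpha^\pm\Pi^\pm\Gamma_{I_\alpha}$ after absorbing the $\Gamma^*$-parts into half-spinor projectors $\Pi^\pm=\frac12(1\pm\Gamma^*)$, using $\Gamma^*\Gamma_{I_\alpha}=(\pm1)^{?}\Gamma_{I_\alpha}\Gamma^*$ together with the fact that $\Gamma^*$ is central (even $n$) so that each summand $c_\alpha\Gamma_{I_\alpha}+\Gamma^*\hat c_\alpha\Gamma_{I_\alpha}$ is of pseudo-monomial type as in Case 2b of Table \ref{table1}. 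Since $q_{a_1+a_2,b_1+b_2}=q_{a_1,b_1}+q_{a_2,b_2}+\{s_{a_1,b_1},s_{a_2,b_2}\}$, and since each $q_{c_\alpha\Gamma_{I_\alpha}+\ldots,\,d_\alpha\Gamma_{I_\alpha}+\ldots}$ is (by Remark \ref{moregen2} / Table \ref{table2}) already known to land in $V$ with the stated eigenvalues on the block spanned by $\{e_\mu:\mu\in I_\alpha\}$, the only thing left to control is the cross terms $\{s_\alpha,s_\beta\}$ for $\alpha\neq\beta$. So the first real step is: compute $s_{c_\alpha\Gamma_{I_\alpha}+\Gamma^*\hat c_\alpha\Gamma_{I_\alpha},\,d_\alpha\Gamma_{I_\alpha}+\Gamma^*\hat\ep_\alpha\hat c_\alpha\Gamma_{I_\alpha}}(e_\mu)$ for $\mu\in I_\beta$, $\mu\in I_\gamma$ with $\gamma\neq\alpha$, and also for $\mu$ outside every $I_\alpha$ — but the hypothesis $(\bigcup I_\alpha)^\complement=\emptyset$ kills that last case.

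Second, I would evaluate the cross term $\{s_\alpha,s_\beta\}(e_\mu)$ explicitly. With the sign choices $\ep_\alpha=(-1)^{\hat I_\alpha}$, the map $s_\alpha$ on the subspace indexed by $I_\alpha$ already vanishes when $\hat I_\alpha$ is even (this is the observation that for $\hat I$ even, $s_{c,c}(v)=2v\lfloor c$ reduces the relevant action), while on indices $\mu\notin I_\alpha$ one has $s_\alpha(e_\mu)=(c_\alpha-(-1)^{\hat I_\alpha}d_\alpha)\Gamma_{I_\alpha}\Gamma_\mu+\Gamma^*(\ldots)$, i.e. $s_\alpha(e_\mu)=2c_\alpha\Gamma_{I_\alpha}\Gamma_\mu$ modulo the $\Gamma^*$-piece if the even/odd sign conventions are as stated. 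Then $s_\beta(s_\alpha(e_\mu))$ for $\mu\in I_\gamma$, $\alpha\neq\beta\neq\gamma$, produces a term proportional to $\Gamma_{I_\beta}\Gamma_{I_\alpha}\Gamma_{I_\gamma}$ which, because the index sets are disjoint and all of length $\geq1$, is a monomial of length $\hat I_\alpha+\hat I_\beta+\hat I_\gamma\geq 3$ — this is never in $V$ unless the coefficient vanishes, forcing severe constraints; when $\gamma=\alpha$ or $\gamma=\beta$ the product collapses by $\Gamma_{I_\alpha}^2=\sigma_{I_\alpha}$ and one is left with the genuinely-in-$V$ contributions that modify the eigenvalue on the $I_\alpha$- or $I_\beta$-block. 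Carrying out this bookkeeping, separating the parity of $\hat I_\alpha$, is exactly what produces the dichotomy: if two index sets $I_{\alpha_0},I_{\alpha_1}$ are odd, the dangerous length-$(\hat I_{\alpha_0}+\hat I_{\alpha_1})$ term from the pure ($\Gamma^*$-free) parts is odd and must be cancelled by the $\Gamma^*$-parts, which forces $c_{\alpha_0}c_{\alpha_1}=\hat c_{\alpha_0}\hat c_{\alpha_1}$; three or more odd sets would demand simultaneous cancellations that are incompatible, ruling that out.

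Third, having shown that at most two of the $\hat I_\alpha$ can be odd and that an odd summand is forced to carry the reciprocal-coefficient relation, I would then verify the converse: that the two listed families (\ref{item:x}) and (\ref{item:y}) do yield genuine quadratic Clifford pairs and read off the eigenvalues via (\ref{sum}) — in case (\ref{item:y}) this is where the explicit rewriting $\gamma+\Gamma^*=(\gamma+\epsilon)-2\epsilon\Pi^{-\epsilon}$ from the remark following Definition \ref{def10} is used to identify each odd block's contribution with $(\gamma^2-1)q_{\bar c_{\alpha_i}\Gamma_{I_{\alpha_i}},\ldots}$, i.e. an effective monomial pair, after which the eigenvalues are again sums and differences of coefficients. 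Non-degeneracy follows since on every block the eigenvalue is a nonzero square (for the even blocks $4\sigma_{I_\alpha}c_\alpha^2$, for the odd blocks $\sigma_{I_{\alpha_i}}(c_{\alpha_0}\mp c_{\alpha_1})^2\cos 2\phi$-type expressions), provided the coefficients are nonzero, which is part of the stated hypotheses.

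The main obstacle I anticipate is the cross-term sign bookkeeping in the second step: tracking how $\Gamma^*$ commutes past $\Gamma_{I_\alpha}$ (a sign $(-1)^{n\hat I_\alpha}=1$ since $n$ even, but intermediate manipulations involve $(-1)^{\hat I_\alpha\hat I_\beta}$), how $\epsilon^{I_\alpha\cup I_\beta}_{\cdots}$ permutation signs combine, and confirming that the even-length summands genuinely decouple from everything (they should, since an even $\Gamma_{I_\alpha}$ acts on $V$ by the skew endomorphism $v\mapsto 2v\lfloor\Gamma_{I_\alpha}$, and the anticommutator of a skew map with these other pieces stays in $V$ only under the disjointness hypothesis). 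Getting those signs exactly right — so that the "two odd sets" case really does close up with the condition $c_{\alpha_0}c_{\alpha_1}=\hat c_{\alpha_0}\hat c_{\alpha_1}$ and not some variant — is the delicate part; everything else is assembling pieces already proved in Proposition \ref{monomial}, Proposition \ref{hom}, and the Case 2b analysis.
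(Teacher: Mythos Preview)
Your plan is sound and will succeed, but it is organized differently from the paper. The paper does not decompose via (\ref{sum}); instead it expands $q_{c,d}(e_\mu)$ directly for the full ansatz and writes down three families of vanishing conditions (one each for the couplings $c_\alpha\hat c_\beta$, $c_\alpha c_\beta$ together with $\hat c_\alpha\hat c_\beta$, and the diagonal $c_\alpha\hat c_\alpha$), then runs a parity case analysis parallel to Table~\ref{table:app} after substituting $\ep_\alpha=-\hat\ep_\alpha=(-1)^{\hat I_\alpha}$. Your route handles the diagonal pieces $q_\alpha$ by quoting Case~2b and isolates the obstruction in the cross terms $\{s_\alpha,s_\beta\}$; these cross terms are exactly the paper's mixed-coefficient conditions, so the two computations coincide in substance. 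A concrete payoff of your organization is that, with the stated sign choices, one checks at once that $s_\alpha(e_\mu)=0$ whenever $\mu\notin I_\alpha$, so the only surviving cross contribution on $e_\mu$ with $\mu\in I_\alpha$ is $s_\beta\bigl(s_\alpha(e_\mu)\bigr)$; this streamlines the bookkeeping relative to the paper's tabulation.

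Two corrections to make before you execute. First, $\Gamma^*$ is \emph{not} central in even dimension: it anticommutes with each $\Gamma_\mu$, so $\Gamma^*\Gamma_{I_\alpha}=(-1)^{\hat I_\alpha}\Gamma_{I_\alpha}\Gamma^*$, not $(-1)^{n\hat I_\alpha}$. This parity-dependent sign is precisely what separates the odd and even summands and what makes the choice $\hat\ep_\alpha=-\ep_\alpha$ natural; if you carry the wrong sign forward, your cross-term cancellation for two odd summands will not close up to $c_{\alpha_0}c_{\alpha_1}=\hat c_{\alpha_0}\hat c_{\alpha_1}$. Second, make sure your argument recovers the asymmetry between cases~(\ref{item:x}) and~(\ref{item:y}): in~(\ref{item:y}) the even summands must satisfy $\hat c_\alpha=0$, not merely $\hat c_\alpha c_\alpha=0$ as in~(\ref{item:x}). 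This stronger constraint arises from the cross term between an odd summand and an even summand of $\Gamma^*\Gamma_{I_\alpha}$ type, and your outline does not yet mention it.
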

\begin{rem}
Regardless of whether the choice of signs $\ep_\alpha,\hat\ep_\alpha$ we use here is unique, we will see later that this choice is natural. Nevertheless, in Section \ref{App1} we list the general conditions such that a pair that satisfies the approach used in Proposition \ref{hom-p} without the sign fixing yields a quadratic Clifford pair. 
\end{rem}

\begin{defn}\label{gen-mon}
The quadratic Clifford pairs of Propositions \ref{hom} and \ref{hom-p} are called {\em generalized monomial}. 
\end{defn}

\section{Connections on Cahen-Wallach spaces}
\subsection{Cahen-Wallach spaces}\label{solvable}

Solvable Lorentzian symmetric spaces -- short CW-spaces -- have been classified by M.\ Cahen and N.\ Wallach in \cite{CW70}. They discovered a one-to-one correspondence between CW-spaces and triples $(V,\langle\cdot,\cdot\rangle,B)$.  Here $V$ is an $n$-dimensional real vector space, $\langle\cdot,\cdot\rangle$ is a block-diagonal Lorentzian metric on the extension $W:=V\oplus\RR^2$ such that $g:=\langle\cdot,\cdot\rangle\big|_V$ is positive definite, and $B\in {\rm End}(V)$ is symmetric with respect to $g$. 

We shortly recall this correspondence. We choose a null-basis $e_+,e_-$ of the factor $\RR^2$ of $W$, i.e.\ $\langle e_+,e_+\rangle=\langle e_-,e_-\rangle=0$, $\langle e_+,e_-\rangle=1$. Therefore, we may identify $\RR_+=\RR e_+=\RR$ and $\RR e_-=\RR_-=\RR^*$, i.e. $(e_+)^*=e_-$ with $*:W\to W^*$ being the isomorphism $w\mapsto \langle w,\cdot\rangle$. 

The space $\mathfrak{g}=V^*\oplus W =V^*\oplus (V\oplus \RR_+ \oplus \RR_-)$ is a Lie algebra subject to the  commutation relations
\begin{align}
\left[v^*,w\right] &= -v^*(Bw)\cdot e_+=  -\langle Bv,w \rangle\cdot e_+\,, \label{c1}\\
\left[v^*, e_-\right] &= Bv\,, \label{c2}\\
\left[e_-,w\right] &= w^*\,, \label{c3}
\end{align}
and all other combinations vanishing. In particular, $\RR_+$ is the one dimensional center of $\mathfrak{g}$.
Within $\mathfrak{g}$ the factor $V^*$ acts on $W$, $[W,W]=V^*$, and $\langle\cdot,\cdot\rangle$ is $V^*$-invariant. From (\ref{c1})-(\ref{c3}) we see that the embedding 
\begin{equation}
V^*\longrightarrow \RR_+\otimes V \hookrightarrow \mathfrak{so}(W)
=\mathfrak{so}(V)\,\oplus\, (\RR_+\otimes V) \,\oplus\,(\RR_-\otimes V)\,\oplus\,( \RR_+\otimes\RR_-)
\end{equation}
is given by $v^*\mapsto  Bv\wedge e_+$ where $x\wedge y (z):=\langle y,z\rangle x-\langle x,z\rangle y$. These data provide a symmetric space with Lorentz metric determined by $\langle\cdot,\cdot\rangle$ and $B$.
The resulting Lorentzian space $M_B$ is indecomposable if and only if the symmetric map $B$ is non-degenerate. This can best be seen if we recall that $M_B$ is decomposable if there exists a $V^*$-invariant subspace $\tilde W\subset W$ such that $\langle\cdot,\cdot\rangle|_{\tilde W\times \tilde W}$ is non-degenerate, see \cite{Baum12, Wu67}.
Moreover, two Lorentzian spaces defined in this way by symmetric maps $B_1$ and $B_2$ are isometric, if and only if $B_1$ and $B_2$ are conformally equivalent, i.e.\ there exists a real scalar $c>0$ and an orthogonal transformation $O$ such that $B_2=c O^tB_1 O$. 

Mainly to fix our notation, we recall some facts on the Clifford algebra in this special situation. We consider $\cl(\mathbbm{R}^{1,1})=\mathfrak{gl}_2\CC$ with generators 
$\Gamma_+=\Gamma(e_+)=\frac{1}{\sqrt{2}}(i\sigma_2+\sigma_1)
		 =\sqrt{2}{\renewcommand{\arraystretch}{1.0}\begin{pmatrix}0&1\\0&0\end{pmatrix}}$, $
 \Gamma_-=\Gamma(e_-)=\frac{1}{\sqrt{2}}(i\sigma_2-\sigma_1)
 		 =\sqrt{2}{\renewcommand{\arraystretch}{1.0}\begin{pmatrix}0&0\\-1&0\end{pmatrix}}$ 
and we denote the two-dimensional volume element by 
$\sigma:=\frac{1}{2}\left[\Gamma_+,\Gamma_-\right]=-\sigma_3
		={\renewcommand{\arraystretch}{1.0}\begin{pmatrix}-1&0\\0&1\end{pmatrix}}$. 
If we denote the generators of $\cl(V)$ by $\{\Gamma_\mu\}_{1\leq \mu\leq n}$ those of 
$\cl(W)=\mathfrak{gl}_2\CC \,\hat\otimes \,\cl(V)$  are given by 
$\{\Gamma_+\otimes\ei, \Gamma_{-}\otimes\mathbbm{1},\sigma\otimes\Gamma_\mu\}$. 
In particular, 
\begin{align}
\mathfrak{gl}_2\CC \ni r &\mapsto r\,\hat\otimes\,\ei = r \otimes\mathbbm{1} \in \cl(W)\label{incl-gl}\,,\\
\cl(V)\ni a &\mapsto \mathbbm{1}\,\hat\otimes\,a=\mathbbm{1}\otimes a^0+ \sigma\otimes a^1\in \cl(W)\,.\label{incl-cl}
\end{align}
Here $a=a^0+a^1\in \cl(V)$ is the decomposition in even and odd part defined by $\bar{\ }:\cl(V)\to \cl(V)$ according to Remark \ref{aut}.
Consider the irreducible Clifford modules $S_2$ and $S(V)$ of $\cl(\mathbbm{R}^{1,1})$ and $\cl(V)$ whose elements are called spinors. The first one decomposes into a sum of two one dimensional half spinor spaces $S_2^\pm={\rm ker}(\Gamma_\mp)$ given by the  $\pm1$-eigenspaces of $\sigma$. If we denote the two projections on the two eigenspaces by $\sigma_\pm=\frac{1}{2}(\ei\pm \sigma)$ then (\ref{incl-cl}) is rewritten as 
$\ei\,\hat\otimes\, a=\sigma_-\,\hat\otimes\,a+\sigma_+\hat \otimes\,a=\sigma_-\otimes \bar a+\sigma_+\otimes a$.
In our choice of $\gamma$-matrices the eigendirections are given by $\vec{e}_1=(1,0)^t$ and $\vec{e}_2=(0,1)^t$ such that a spinor in $S(W)= S_2\,\hat\otimes\,S(V)$ can be written as 
$\vec\eta={\begin{pmatrix}\eta_1\\\eta_2\end{pmatrix}}= \vec{e}_1\otimes\eta_1+ \vec{e}_2\otimes \eta_2$. 
The action of $\cl(W)$ on $S(W)$ is now given by 
\begin{equation}\label{matr}\begin{aligned}
{(r \,\hat\otimes\,a)\begin{pmatrix}\eta_1\\\eta_2 \end{pmatrix} 
=\begin{pmatrix}r_{11}\bar a & r_{12}a\\r_{21} \bar a & r_{22}a\end{pmatrix}
	\begin{pmatrix}\eta_1\\\eta_2 \end{pmatrix} 
= r\begin{pmatrix}\bar a\eta_1\\ a\eta_2
\end{pmatrix}}
\end{aligned}
\end{equation}
for $r\in\mathfrak{gl}_2\CC$ and $a\in\cl(V)$. 
In particular, the image of $v^*\in V^*$  is given by 
\begin{equation}
\begin{aligned}
v^*= Bv\wedge e_+ \mapsto \ &
	\frac{1}{4}\big(( \Gamma_+\otimes\ei)( \sigma\otimes Bv)-(\sigma\otimes Bv)( \Gamma_+ \otimes\ei)\big) \\
&=	\frac{1}{4} (\Gamma_+\sigma-\sigma\Gamma_+)\otimes  Bv 
= \frac{1}{2}\Gamma_+\otimes Bv
= \frac{1}{\sqrt{2}}{\begin{pmatrix}0&Bv\\0&0\end{pmatrix}}
\end{aligned}
\end{equation}
when considered as an element of $\mathfrak{so}(W)\subset \cl(W)$ under the spin representation.

\subsection{Invariant connections and Clifford maps}

We follow here the explanations in \cite[ch.~X]{KN2} and \cite[ch.~1]{CS} and we cordially refer the reader to these texts for more details.
Consider a reductive spin homogeneous space $G/H$, i.e.\ the Lie algebras $\mathfrak{g}$ and $\mathfrak{h}$ of $G$ and $H$ are complemented by an $ad_\mathfrak{h}$-invariant subspace $\mathfrak{p}$ such that $\mathfrak{g}=\mathfrak{h}\oplus\mathfrak{p}$. 
We identify $\mathfrak{h}$ with its image in $\mathfrak{so}(\mathfrak{p})$ via the isotropy representation. Furthermore, let $\mathfrak{h}'\subset\mathfrak{so}(\mathfrak{p})$ be a subalgebra such that $[\mathfrak{h}',\mathfrak{h}]\subset\mathfrak{h}$. In particular, $\mathfrak{h}'$ is the Lie algebra to a sub group $H'$ of $N_H$ where $N_H\subset SO(\mathfrak{p})$ is the normalizer of $H$ in $SO(\mathfrak{p})$. Under mild conditions on $H'$ we may assume $G/H\simeq \bar G/\bar H$ with $Lie(\bar G)=\mathfrak{g}\oplus\mathfrak{h}'$, $Lie(\bar H)=\mathfrak{h}\oplus\mathfrak{h}'$, see \cite[7.B]{Besse}.

Let $G/H$ be a reductive homogeneous space with decomposition $\mathfrak{g}=\mathfrak{h}\oplus\mathfrak{p}$. 
A  principal bundle $K\to P\to G/H$ over $G/H$ is called homogeneous if there exist a smooth homomorphism $\imath:H\to K$ such that $P\simeq G\times_H K$ via the left action of $H$ on $K$. For example $K=H$ and $\imath=id$ yields the $H$-principal bundle $H\to G\to G/H$.
The $G$-invariant connections on $P$ are in one-to-one correspondence with linear maps $\Lambda:\mathfrak{g}\to\mathfrak{k}$ such that 
$\Lambda(h)=\imath_*(h)$ and 
$\Lambda\circ {\rm ad}_h={\rm ad}_{\imath_*(h)}\circ \Lambda$ for all $h\in\mathfrak{h}$. 
This correspondence is taken over to associated bundles in the following manner. 
Consider a  representation $\beta: K\to GL(V)$  and let $E=P\times_\beta V=G\times_{\beta\circ \imath} V$ be the vector bundle associated to $P$. Then the $G$-invariant linear connections on $E$ are in one-to-one correspondence with linear maps $\Lambda:\mathfrak{g}\to \mathfrak{gl}(V)$ such that 
$\Lambda(h)=\beta_*\circ \imath_*(h)$ and 
$\Lambda([h,x])=[\beta_*\circ\imath_*(h),\Lambda(x)]$ for all $h\in\mathfrak{h},x\in\mathfrak{g}$.

The curvature of the connection that is defined by $\Lambda$ is given by
\begin{equation}\label{curv}
\mathcal{R}^{\Lambda}(x,y)=\big[\Lambda(x),\Lambda(y)]-\Lambda([x,y]_\mathfrak{p})-\beta_*\circ\imath_*([x,y]_\mathfrak{h})\,.
\end{equation}

We will apply the above to the case of CW-spaces and take over the notations from Section \ref{solvable}. We consider $G$ with $\mathfrak{g}=V^*\oplus W$ and a vector bundle associated to $G$ via the spin representation $\Gamma:\mathfrak{spin}(W)\supset V^* \oplus \mathfrak{h}'\to \cl(W)$. Here $\mathfrak{h}'\subset\mathfrak{so}_B(V)$ where 
\begin{equation}
\begin{aligned}
\mathfrak{so}_B(V)&=\{A\in\mathfrak{so}(V)\,|\, [A,B]=0\}\\
& =\{A\in\mathfrak{so}(V)\,|\, [A,v^*]=(Av)^*\}\subset\mathfrak{so}(V)\subset\mathfrak{so}(W)
\end{aligned}
\end{equation}
denotes the subalgebra that leaves invariant the symmetric map $B$ that defines the CW-space. In particular the isometry algebra of $M_B$ obeys
\[
\mathfrak{isom}(M_B)=\mathfrak{so}_B(V)\oplus V^*\oplus W\,,
\]
see \cite{CW70, Neukirchner}.

Let us denote by $S$ the spinor module on which $\cl(W)$ acts, i.e.\ $\cl(W)\subset  \mathfrak{gl}(S)$ and by $D$ an invariant connection of the associated bundle $\cancel{S}=G\times_\Gamma S$ defined by the equivariant map $\Lambda$. Then the space of parallel sections $\mathcal{K}_1:=\{\psi\in {\rm sec}(\cancel{S})\,|\,D\psi=0\}$ is isomorphic to the subspace $S'\subset S$ given by $S':=\{\xi\in S\,|\, R^\Lambda(x,y)\xi=0 \text{ for all }x,y\in V^*\oplus W\}$, see \cite{BaumKath99}.

To emphasize the algebraic character of invariant connections expressed by the defining equivariant linear map we introduce the following notions.

\begin{defn}\label{Cliffordmap}
Let $M_B$ be a CW-space defined by the symmetric map $B$ and  $\mathfrak{h}'\subset\mathfrak{so}_B(V)$ be a subalgebra.
\begin{enumerate} 
\item
A {\em Clifford map} of $(M_B,\mathfrak{h}')$ 
is a $V^*\oplus\mathfrak{h}'$-equivariant linear map 
$\rho:\mathfrak{h}'\oplus V^*\oplus W \to\cl(W)$. Moreover, the image of the Clifford map $\rho$ of $(M_B,\mathfrak{h}')$ acts on a Clifford module $S$ and 
the restriction to $V^*\oplus\mathfrak{h}'$ is fixed to be the spin-representation, i.e.\  $\rho\big|_{V^*\oplus\mathfrak{h}'}=\Gamma$. 
\item 
A Clifford map $\rho$ is called {\em Clifford representation} if it is a representation of  $\mathfrak{h}'\oplus V^*\oplus W$ on the Clifford module $S$.
\end{enumerate}
\end{defn}

\begin{nota}
A Clifford map  $\rho$ is called {\em simple} if $\mathfrak{h}'=0$. It is called {\em irreducible} if $S$ is, otherwise it is called {\em reducible}.
\end{nota}

\begin{rem}\label{rem:restrKill}
\begin{itemize}
\item
If we consider a Clifford map as invariant connection the Clifford representations are exactly those connections for which the curvature vanishes. Therefore, the curvature of a Clifford map $\rho$ measures the defect from being a Clifford representation.
\item In case of a Clifford representation the space $\mathcal{K}_1$ of parallel sections has maximal dimension $\dim(S)$.
\item It is also interesting to consider subspaces $S'\subset S$ and Clifford maps for which the Clifford map restricted to the subspace is indeed a representation. They are associated to connection with a reduced amount of parallel sections, namely $\dim(\mathcal{K}_1)=\dim(S')<\dim(S)$. We will come back to this in Section \ref{restr}.
\end{itemize}
\end{rem}

\subsection{Simple irreducible Clifford representation of $M_B$}\label{sec:simple-irred}

We consider a CW-space $M_B$ with associated Lie algebra $\mathfrak{g}=V^*\oplus W=V^*\oplus(V\oplus\RR_-\oplus\RR_+)$ as before. An irreducible simple Clifford map of $M_B$ is a $V^*$-equivariant map $\rho:\mathfrak{g}\to\cl(W)$ that acts on the irreducible Clifford module $S=S(W)=S_2\,\hat\otimes\,S(V)$, see Definition \ref{Cliffordmap}
Moreover, the restriction $\rho\big|_{V^*}$ is assumed to be given by the spin representation $\Gamma: V^*\subset\mathfrak{so}(W) \to \cl(W)$, $\Gamma(v^*)=\frac{1}{2}\Gamma_+\,\hat\otimes\,Bv$.

An examination of the involved brackets regarding to the $V^*$-equivariance yields that a Clifford map $\rho$ of $\mathfrak{g}$ on $S$ is of the form 
\begin{align}
\rho(v^*) &	=\frac{1}{2}\Gamma_+\,\hat\otimes\, Bv 	\,,						\label{19}\\
\rho(e_+) &	= \Gamma_+\,\hat\otimes\, a \,,									\label{20}\\
\rho(e_-) &	= \sigma_-\,\hat\otimes\, c+\sigma_+\,\hat\otimes\, d 
				+\Gamma_+\,\hat\otimes\,e-\Gamma_-\,\hat\otimes\, b \,,			\label{21}\\
\rho(w)   &	= -\sigma_-\,\hat\otimes\, w b-\sigma_+\,\hat\otimes\,\bar bw
				-\frac{1}{2}\Gamma_+\,\hat\otimes\,s_{\bar c,d}(w)	\,,			  	 	\label{22}
\end{align}
for $a,b,c,d\in\cl(V)$, see Section \ref{app3}. In terms of matrices as in (\ref{matr}) this is 
\begin{gather*}
\rho(v^*) =  \frac{1}{\sqrt{2}}{\begin{pmatrix}0&Bv\\0&0\end{pmatrix}}\,,\qquad
\rho(e_+) = {\begin{pmatrix}0& \sqrt{2}\,a\\ 0 &0\end{pmatrix}}\,, \\
\rho(e_-) = {\begin{pmatrix}\bar c & \sqrt{2}\, e \\ \sqrt{2}\,\bar b & d\end{pmatrix}}\,,\qquad
\rho(w)   = {\begin{pmatrix}w\bar b & -\frac{1}{\sqrt{2}}s_{\bar c,d}(w)\\ 0 &-\bar bw\end{pmatrix}}\,.
\end{gather*}
An additional non-trivial condition that is a consequence of the $(V^*,\RR_-)$-bracket is
\begin{equation}\label{24}
	\Gamma_{\{\mu} \bar b \Gamma_{\nu\}}  = g_{\mu\nu} a \,.
\end{equation}
If we consider (\ref{24}) as a bilinear expression on $V$ and take the trace with respect to the metric on $V$ we get 
\begin{equation}\label{24a}
a =  \frac{1}{n}\sum_{\mu=1}^n\Gamma^\mu \bar b\Gamma_\mu\,.
\end{equation}
If we again use the images $\Gamma_\mu$ of an orthonormal basis we get for the expansion of $\bar b=\sum_{I}b_I\Gamma_I$ and for all $\mu\neq\nu$ 
\begin{align*}
0 
 =    2\sum_{I}  b_I\Gamma_{\{\mu}\Gamma_I\Gamma_{\nu\}}
 =    2\!\!\!\sum_{I:\mu\not\in I,\nu\in I}\!\!\!(-1)^{I} b_I\Gamma_I\Gamma_{\mu\nu}
    - 2\!\!\!\sum_{I:\mu\in I,\nu\not\in I}\!\!\!(-1)^{I} b_I\Gamma_I\Gamma_{\mu\nu}\,.
\end{align*}
Therefore, $ b_I=0$ for all $I$ such that there exist $\mu,\nu$ with $\mu\in I$ and $\nu\not\in I$, i.e. for all $I$ with $\hat I\neq 0,n$. If $n$ is odd this yields that $\bar b=\alpha$ is a scalar and if $n$ is even we have $\bar b=\alpha+\beta \Gamma^*$. 
Then $a$ is determined by $\bar b$ because (\ref{24a}) yields $a=-\alpha$ if $n$ is odd and  $a=-(\alpha-\beta\Gamma^*)$ if $n$ is even.
At this point we found the simple Clifford maps of $M_B$:

\begin{prop}\label{maps}
A simple Clifford map of $M_B$ is given by (\ref{19})-(\ref{22}) with $a=\alpha+\beta\Gamma^*$ and $b= -\alpha+\beta\Gamma^*$ if $\dim(V)$ is even, and $a=-b= \alpha$ if $\dim(V)$ is odd.
\end{prop}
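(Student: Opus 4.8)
The plan is to extract the structural constraints on $a,b,c,d$ purely from $V^*$-equivariance of $\rho$, then close the argument with the one extra bracket relation (\ref{24}). I would start from the observation already recorded in (\ref{19})--(\ref{22}): working out the equivariance condition $\rho([v^*,x]) = [\Gamma(v^*),\rho(x)]$ for $x \in \{e_+, e_-, w\}$ forces $\rho$ to have the displayed block form, leaving $a,b,c,d,e \in \cl(V)$ as free data (this is the content referred to Section \ref{app3}). So the substance of the Proposition is not in deriving that form but in pinning down $a$ and $b$; the parameters $c,d,e$ remain free at this stage (they get constrained later, in the curvature/flatness analysis, not here).

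Next I would turn to the relation (\ref{24}), $\Gamma_{\{\mu}\bar b\,\Gamma_{\nu\}} = g_{\mu\nu}\,a$, which the excerpt states is the non-trivial consequence of the $(V^*,\RR_-)$-bracket. Taking the $g$-trace over $\mu,\nu$ yields (\ref{24a}), $a = \frac1n \sum_\mu \Gamma^\mu \bar b\,\Gamma_\mu$. Now I would expand $\bar b = \sum_I b_I \Gamma_I$ in the orthonormal $\Gamma$-basis and compute the off-diagonal part of (\ref{24}), i.e.\ the condition $\Gamma_{\{\mu}\bar b\,\Gamma_{\nu\}} = 0$ for $\mu \neq \nu$. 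The key computation is that $\Gamma_\mu \Gamma_I \Gamma_\nu$ (symmetrized in $\mu,\nu$) vanishes unless $I$ contains either both or neither of $\mu,\nu$ — more precisely, for $\mu \in I$, $\nu \notin I$ the term survives with a definite sign and cannot cancel against the $\mu \notin I$, $\nu \in I$ term because those produce linearly independent Clifford monomials. This forces $b_I = 0$ whenever the index set $I$ separates some pair $\mu,\nu$, i.e.\ whenever $0 < \hat I < n$. Hence $\bar b$ is a linear combination of $\mathbbm 1$ and $\Gamma_{1\ldots n}$, which is just $\bar b = \alpha$ in odd dimension (since $\Gamma^* \sim \mathbbm 1$) and $\bar b = \alpha + \beta\Gamma^*$ in even dimension.

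Finally I would feed this back: from $b = \overline{\bar b}$ and $\bar{\Gamma^*} = (-1)^n \Gamma^*$ one gets $b = -\alpha$ (odd) or $b = -\alpha + \beta\Gamma^*$ (even), using that $\Gamma^*$ is even when $n$ is even. And (\ref{24a}) determines $a$: since $\Gamma^\mu \Gamma_\mu$ acting by conjugation fixes scalars and acts by $(-1)^{n-1}$ — equivalently, $\frac1n\sum_\mu \Gamma^\mu \Gamma^* \Gamma_\mu = (-1)^{n-1}\Gamma^*$ — one obtains $a = -\alpha$ when $n$ is odd and $a = -(\alpha - \beta\Gamma^*)$ when $n$ is even, matching the claimed $a = \alpha' + \beta\Gamma^*$, $b = -\alpha' + \beta\Gamma^*$ after renaming $\alpha' = -\alpha$. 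I expect the main obstacle to be bookkeeping rather than conceptual: carefully justifying that the two sums in the off-diagonal computation cannot cancel (linear independence of the resulting Clifford monomials $\Gamma_I\Gamma_{\mu\nu}$ for distinct admissible $I$), and correctly tracking the sign $(-1)^n$ versus parity issues that distinguish the even and odd cases — in particular making sure the $\Gamma^*$-term really is unconstrained in even dimension but collapses to a scalar in odd dimension.
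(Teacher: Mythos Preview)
Your proposal is correct and follows essentially the same route as the paper: derive the block form (\ref{19})--(\ref{22}) from $V^*$-equivariance, extract (\ref{24}) from the $(V^*,\RR_-)$-bracket, trace to get (\ref{24a}), and kill all $b_I$ with $0<\hat I<n$ via the off-diagonal condition $\Gamma_{\{\mu}\bar b\,\Gamma_{\nu\}}=0$. One minor sign to watch: in even dimension $\tfrac1n\sum_\mu\Gamma^\mu\Gamma^*\Gamma_\mu=+\Gamma^*$ (the anticommutation sign and $\Gamma_\mu^2=-1$ combine), not $(-1)^{n-1}\Gamma^*$, though your final answer $a=-(\alpha-\beta\Gamma^*)$ is correct regardless.
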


\begin{rem}
As noticed in (\ref{curv}) the $(W,W)$-brackets of a Clifford map interpreted as an invariant connection are connected to its curvature. In the case that $\dim(V)$ is odd it is given by
\begin{equation}\label{curv-MB}
\begin{aligned}
\mathcal{R}^\rho(e_-,e_+) 		=\ & [\rho(e_+),\rho(e_-)]  
			=   \alpha^2+ \alpha\Gamma_+\otimes (\bar c-d)\,,  \\
\mathcal{R}^\rho(e_\mu, e_\nu) =\ & [\rho(e_\mu),\rho(e_\nu)] 
			= \alpha^2\Gamma_{\mu\nu} 
				 -\alpha\Gamma_+ \otimes \big \{s_{\bar c,d}(\Gamma_{[\mu}),\Gamma_{\nu]}\big\}\,,\\
\mathcal{R}^\rho(e_-,e_\mu) 
			=\ & [\rho(e_-),\rho(e_\mu)]- \Gamma([e_-,e_\mu])\,. \\
\end{aligned}
\end{equation}
In particular, for generic parameters the curvature is generic, too. The same holds in the even dimensional case. 
Although we will first emphasize on Clifford representations, i.e.\ flat connections, we will also consider non-flat connections later, see  Section \ref{restr}
\end{rem}

The examination of the $(W,W)$-brackets yields further conditions for the Clifford map to be a Clifford representation. We get
\begin{align}
&	\bar ca-ad =0  \,,\quad
	a\bar b=\bar b a = 0 \,,			
\label{23}\\
\intertext{from $[\RR_+,\RR_-]$,} 
&\begin{aligned}
	&\bar b\Gamma_{[\mu}\bar b\Gamma_{\nu]} = \Gamma_{[\mu}\bar b\Gamma_{\nu]}\bar b =0\,,\quad  
	&\Gamma_{[\mu}\bar b s_{\bar c,d}(\Gamma_{\nu]})-s_{\bar c,d}(\Gamma_{[\mu})\bar b\Gamma_{\nu]}=0  \,,
 \end{aligned}
\label{25}\\
\intertext{from $[V,V]$, and}
&	\Gamma_\mu(\bar b\bar c+d\bar b)-2\bar c\Gamma_\mu\bar b
				=(\bar b\bar c+d\bar b)\Gamma_\mu -2\bar b \Gamma_\mu d =0 \,,\quad
	\bar b\Gamma_\mu\bar b=0 \,, 
\label{26}\\
&	 q_{\bar c ,d}(v)+2s_{e\bar b,-\bar be}(v) = -Bv 
\label{27}
\end{align}
from $[\RR_-,V]$. 
Of course, (\ref{25}-1) is a consequence of (\ref{26}-2). 

Taking into account Proposition \ref{maps} and (\ref{23}-2) we get $a=b=0$ in the odd dimensional case. In the even dimensional case $\alpha^2=\beta^2$ for the two scalars such that $\bar b=-\alpha\Pi^\mp$ and $a=\alpha\Pi^\pm$ where $\Pi^\pm=\frac{1}{2}(\ei \pm \Gamma^*)$ denote the projections on the two half spinor subspaces of $S(V)$. We remark that (\ref{26}-2) is fulfilled in this situation. In the odd dimensional case and in the even dimensional case with $\alpha=0$ equations (\ref{23})-(\ref{26}-2) are satisfied such that we get the following result, which is in analogy to Theorem 2.3 in \cite{santi1}. 

\begin{thm}\label{alphazero}
Any simple irreducible Clifford representation of $M_B$ with $\rho(e_+)=0$ is given by 
\begin{equation}\label{rep-alpha0}
\begin{gathered}
\rho(v^*) 	=\frac{1}{2}\Gamma_+\,\hat\otimes\, Bv\,,\qquad
\rho(w)   	= -\frac{1}{2}\Gamma_+\,\hat\otimes\,s_{\bar c,d}(w)  \\
\rho(e_-) 	= \sigma_-\,\hat\otimes\, c +\sigma_+\,\hat\otimes\, d
			  +\Gamma_+\,\hat\otimes\, e 
\end{gathered}
\end{equation}
with $(\bar c,d)$ being a quadratic Clifford pair representing the symmetric map $-B$. In particular, $e\in \cl(V)$ is a free parameter of the representation.
\end{thm}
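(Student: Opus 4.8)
The plan is to start from the general shape of a simple irreducible Clifford map on $M_B$ as recorded in equations (\ref{19})--(\ref{22}), specialize to the case $\rho(e_+)=0$, which by (\ref{20}) means $a=0$, and then impose the remaining equivariance/flatness conditions (\ref{23})--(\ref{27}). First I would observe that setting $a=0$ makes Proposition \ref{maps} force $b=0$ as well: in odd dimensions this is immediate since $a=-b$, and in even dimensions $a=\alpha+\beta\Gamma^*=0$ gives $\alpha=\beta=0$, hence $b=-\alpha+\beta\Gamma^*=0$. With $a=b=0$ the conditions (\ref{23}), (\ref{25}), and (\ref{26}) become vacuous (every one of them contains a factor of $a$ or $\bar b$), so the only surviving constraint from the $(W,W)$-brackets is (\ref{27}).

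Next I would feed $\bar b=0$ into (\ref{27}): the term $2s_{e\bar b,-\bar be}(v)$ vanishes identically, so (\ref{27}) collapses to $q_{\bar c,d}(v)=-Bv$ for all $v\in V$. By Definition \ref{cp} this says precisely that $(\bar c,d)$ is a quadratic Clifford pair associated to the symmetric map $-B$. Conversely, I would check that given any quadratic Clifford pair $(\bar c,d)$ representing $-B$ and any $e\in\cl(V)$, formulas (\ref{rep-alpha0}) — which are just (\ref{19}), (\ref{21}), (\ref{22}) with $a=b=0$ — define a genuine Clifford representation: the $V^*$-equivariance was already built into the derivation of (\ref{19})--(\ref{22}), condition (\ref{24}) reads $g_{\mu\nu}a=0$ and is satisfied since $a=0$, and the remaining brackets reduce to (\ref{27}), i.e. to the quadratic Clifford pair condition, which holds by hypothesis. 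Finally $e$ appears only through $\rho(e_-)$'s $\Gamma_+\hat\otimes e$ term, and one checks from (\ref{curv-MB}) (evaluated at $\alpha=0$) that all curvature components vanish regardless of $e$, so $e$ is indeed a free parameter.

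The main obstacle, such as it is, is bookkeeping rather than conceptual: one must verify carefully that with $a=b=0$ every condition in (\ref{23})--(\ref{26}) is trivially met (each is visibly linear or bilinear in $a,\bar b$) and that no further constraint hides in the $(V^*,V^*)$ or $(V^*,\RR_\pm)$ brackets beyond what was already used to pin down the form (\ref{19})--(\ref{22}) and to obtain (\ref{24}), (\ref{27}). I would also want to make the identification of $q_{\bar c,d}|_V=-B$ with "$(\bar c,d)$ is a quadratic Clifford pair representing $-B$" explicit, noting that conditions (i) and (ii) of Definition \ref{cp} are automatic here: (i) holds because $\rho(w)$ in (\ref{22}) is well-defined only if $s_{\bar c,d}(w)\in V$, which combined with (\ref{27}) forces $q_{\bar c,d}(w)=s_{\bar c,d}^2(w)\in V$, and (ii) holds because (\ref{27}) identifies $q_{\bar c,d}|_V$ with the $g$-symmetric map $-B$. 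Citing the analogy with \cite[Theorem 2.3]{santi1} closes the discussion.
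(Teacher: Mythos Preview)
Your proposal is correct and follows essentially the same route as the paper: specialize the general Clifford-map form (\ref{19})--(\ref{22}) to $a=0$, use Proposition~\ref{maps} to force $b=0$, observe that (\ref{23})--(\ref{26}) are then vacuous, and read off from (\ref{27}) that $q_{\bar c,d}\big|_V=-B$.

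One small correction: your justification of condition~(i) in Definition~\ref{cp} is off. The formula (\ref{22}) for $\rho(w)$ is perfectly well-defined as an element of $\cl(W)$ for \emph{any} $c,d\in\cl(V)$; there is no requirement that $s_{\bar c,d}(w)\in V$. Conditions (i) and (ii) both follow directly and immediately from (\ref{27}) itself: since $q_{\bar c,d}(v)=-Bv$ with $B\in{\rm End}(V)$ symmetric, you get $q_{\bar c,d}V\subset V$ and symmetry of $q_{\bar c,d}\big|_V$ for free. Also, the reference to (\ref{curv-MB}) for the freeness of $e$ is slightly misplaced, as that display is written for the odd-dimensional case; the cleaner observation is simply that $e$ enters the constraint list (\ref{23})--(\ref{27}) only through the term $s_{e\bar b,-\bar be}$ in (\ref{27}), which vanishes once $\bar b=0$.
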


Now consider $\alpha\neq 0$, i.e.\ $a=\alpha\Pi^\pm$ and $b=-\alpha\Pi^\mp$. We restrict in the following to the upper sign and we denote the blocks of $\bar c$ and $d$ with respect to the half spinor splitting by $ c^+_-=\Pi^+ \bar c \Pi^-$, and so on. Equation (\ref{23}-1) yields $c^-_+=d^+_-=0$ and  $c^+_+=d^+_+=r$. The only surviving components of (\ref{26}-1) are
$
v^+_-(c^-_-+d^-_-)-2rv^+_-=0
$
and
$
(c^-_-+d^-_-)v^-_+-2v^-_+r=0
$. 
Therefore, $r$, $c^-_-$, and $d^-_-$ are scalars that obey $2r=c^-_-+d^-_-$, in particular, $r-d^-_-=-(r-c^-_-)$. This also solves the only surviving component of equation (\ref{25}-2), which in this situations reads
$
(d^-_- +  c^-_- - 2r)(v^+_-w^-_+ - w^+_-v^-_+)=0
$.
The mixed components of (\ref{27}) are
$
(r-d^-_-)^2v^+_--2\alpha v^+_-e^-_-=-(Bv)^+_-
$ 
and 
$
(d^-_--r)^2v^-_+-2\alpha e^-_-v^-_+=-(Bv)^-_+
$ 
and if we apply this to a basis of eigenvectors we see that the only symmetric map $B$ that can be realized has to be diagonal, $B=-2\lambda\ei$. In this situation the scalars obey 
$
(r-d^-_-)^2=(r-c^-_-)^2=2(\alpha e^-_-+\lambda)
$. 
The $\sfrac{+}{+}$-component of (\ref{27}) connects the off-diagonal blocks of $\bar c$, $d$, and $e$ via
$
(d^-_--r)(c^+_-v^-_+-v^+_-d^-_+)=2\alpha (e^+_-v^-_++v^+_-e^-_+)
$ 
or $(d^-_--r)s_{c^+_-,d^-_+}(v)=2\alpha s_{e^+_-,-e^-_+}(v)$ 
such that the only unconstrained component of $\rho(e_-)$ is $e^+_+$. 

We summarize the discussion above in the following proposition. 

\begin{thm}\label{alphanotzero}
A simple Clifford representation of $M_B$ with $\rho(e_+)\neq 0$ is only possible in even dimension and for $B=-2\lambda\ei$. It is given by 
\begin{equation}\label{rep-alphanot0}
\begin{aligned}
\rho(v^*) =\ & -\lambda\Gamma_+\,\hat\otimes\, v	\,,\qquad
\rho(e_+) = \alpha \Gamma_+\,\hat\otimes\, \Pi^\pm \,,	
\\
\rho(e_-) =\ & \rho_0\mathbbm{1}\, \hat \otimes\, \mathbbm{1} 
			  +(\alpha\Gamma_-+\beta\Gamma_+-\sqrt{2(\alpha\beta+\lambda)}\sigma_-
			  +\sqrt{2(\alpha\beta+\lambda)}\sigma_+)\,\hat\otimes\,\Pi^\mp 		\\
			& +\sigma_-\,\hat\otimes\,c^\pm_\mp
			  +\sigma_+\,\hat\otimes\,d^\mp_\pm
			  +\Gamma_+\,\hat\otimes\,(e^+_-+e^-_+ + e^\pm_\pm)
\\
\rho(v)  =\ &  \alpha \sigma_-\,\hat\otimes\, \Pi^\pm v +\alpha\sigma_+\,\hat\otimes\,\Pi^\mp v
			 +\sqrt{\frac{\alpha\beta+\lambda}{2}}\Gamma_+\,\hat\otimes\,v 
				+\frac{1}{2}\Gamma_+\,\hat\otimes\,s_{\bar c^\pm_\mp,d^\mp_\pm}(v) \,.
\end{aligned}
\end{equation}
The free parameters that describe the representation are the scalars $\alpha,\beta,\rho_0$ and the Clifford element $e^\pm_\pm$. The further contributions are related by 
$\sqrt{2}\,\alpha s_{e^\pm_\mp,-e^\mp_\pm}(v)=\sqrt{\alpha\beta+\lambda}\,s_{\bar c^\pm_\mp,d^\mp_\pm}(v)$ for all $v\in V$.
\end{thm}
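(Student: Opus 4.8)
The plan is to start from the parametrisation (\ref{19})--(\ref{22}) of a simple Clifford map, with $a,b$ already constrained as in Proposition~\ref{maps}, and to impose one after another the representation conditions (\ref{23})--(\ref{27}), tracking at each stage which parameters are killed and which survive. Everything would be organised around the half-spinor block decomposition $S(V)=\Pi^+S(V)\oplus\Pi^-S(V)$, writing the $2\times2$ blocks of $\bar c,d,e$ as $c^\pm_\pm=\Pi^\pm\bar c\,\Pi^\pm$, and so on. Two elementary facts carry most of the weight: in even dimension $\Gamma^*$ is block-diagonal while Clifford multiplication by $v\in V$ is block-off-diagonal (it anticommutes with $\Gamma^*$), and for $v\neq0$ the off-diagonal block $v^+_-\colon\Pi^-S(V)\to\Pi^+S(V)$ is invertible, since $v$ acts invertibly on $S(V)$ and interchanges the two half-spinor spaces.

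First I would feed the shape of $a,b$ from Proposition~\ref{maps} into (\ref{23}-2), that is $a\bar b=\bar b a=0$. If $\dim(V)$ is odd then $a=-b=\alpha\in\CC$, so $a\bar b=-\alpha^2$, forcing $\alpha=0$ and hence $\rho(e_+)=\Gamma_+\hat\otimes a=0$, contrary to hypothesis; so only even dimension survives. If $\dim(V)$ is even then $a=\alpha+\beta\Gamma^*$, $b=-\alpha+\beta\Gamma^*$ and $\bar\Gamma^*=\Gamma^*$, whence $a\bar b=\beta^2-\alpha^2$; thus $\alpha^2=\beta^2\neq0$ and, up to renaming the scalar, $a=\alpha\Pi^\pm$, $b=-\alpha\Pi^\mp$ with $\alpha\neq0$, and I would fix the upper sign (the lower one being symmetric). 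Inserting $a=\alpha\Pi^+$, $\bar b=-\alpha\Pi^-$ into (\ref{23}-1) then annihilates the blocks $c^-_+$ and $d^+_-$ and forces the $++$-blocks of $\bar c$ and $d$ to agree; this common block, together with the common scalar part of $c$ and $d$, is inert under all brackets and reappears as the free term $\rho_0\,\ei\,\hat\otimes\,\ei$ of $\rho(e_-)$. Next, (\ref{26}-1) says precisely that $c^-_-+d^-_-$ commutes with every product $v^-_+w^+_-$, and since these span the full endomorphism algebra of $\Pi^-S(V)$, $c^-_-+d^-_-$ is a scalar; invertibility of $v^+_-$ then makes the $++$-block a scalar $r$ with $2r=c^-_-+d^-_-$. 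At this stage (\ref{25}-2) holds automatically, as do (\ref{26}-2) (directly, from $\bar b=-\alpha\Pi^-$) and hence (\ref{25}-1).

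The heart of the argument is condition (\ref{27}), $q_{\bar c,d}(v)+2s_{e\bar b,-\bar be}(v)=-Bv$. A direct block computation turns its off-diagonal blocks into $(r-d^-_-)^2v^+_- - 2\alpha\,v^+_-e^-_- = -(Bv)^+_-$ and its transpose. Evaluating this on an orthonormal eigenbasis of $B$ and using that each $v^+_-$ is invertible, one gets $2\alpha e^-_- = (r-d^-_-)^2+\mu$ as an operator identity on $\Pi^-S(V)$ for every eigenvalue $\mu$ of $B$; in particular $e^-_-$ is a scalar and, the left-hand side being independent of $v$, all eigenvalues of $B$ coincide, so $B=-2\lambda\ei$. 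This is the step I expect to be the real obstacle: the remainder is block bookkeeping, but here one genuinely has to exclude every non-scalar $B$ once $\rho(e_+)\neq0$. With $B=-2\lambda\ei$ in hand, the same off-diagonal block gives $(r-d^-_-)^2=(r-c^-_-)^2=2(\alpha e^-_-+\lambda)$, so writing $\beta:=e^-_-$ the scalars $c^-_-,d^-_-,r$ are pinned down by $2r=c^-_-+d^-_-$ and $(r-d^-_-)^2=2(\alpha\beta+\lambda)$, leaving free only the overall scalar $\rho_0$ (and a sign), while the $++$-block of (\ref{27}) produces the single remaining constraint $(d^-_--r)\,s_{c^+_-,d^-_+}(v)=2\alpha\,s_{e^+_-,-e^-_+}(v)$, equivalently $\sqrt2\,\alpha\,s_{e^\pm_\mp,-e^\mp_\pm}(v)=\sqrt{\alpha\beta+\lambda}\,s_{\bar c^\pm_\mp,d^\mp_\pm}(v)$; the block $e^+_+$ stays unconstrained. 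Substituting all of this back into (\ref{19})--(\ref{22}) and expanding $\rho(w)$ in these blocks reads off precisely the formulas (\ref{rep-alphanot0}) together with the stated list of free parameters, which completes the proof.
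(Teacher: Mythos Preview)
Your argument follows essentially the same route as the paper's own proof (given in the paragraphs preceding the theorem): you use Proposition~\ref{maps} together with (\ref{23}-2) to force even dimension and $a=\alpha\Pi^\pm$, $\bar b=-\alpha\Pi^\mp$, then read off the block constraints from (\ref{23}-1) and (\ref{26}-1), and finally extract $B=-2\lambda\ei$ and the remaining relations from the block components of (\ref{27}). The only minor difference is expository: where the paper simply asserts that $r,c^-_-,d^-_-$ are scalars after (\ref{26}-1), you give the explicit reason (commutation with all products $v^-_+w^+_-$, which span $\mathrm{End}(\Pi^-S(V))$), and conversely your phrase ``in particular $e^-_-$ is a scalar'' is a bit quick at that spot, though it matches the paper's level of detail.
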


\begin{rem}\label{rem-special}
A special choice of parameters in Proposition \ref{alphanotzero}, namely $\rho_0=0$, $\alpha=-\lambda$, $\beta=1$ as well as $c^\pm_\mp=d^\mp_\pm=e^+_-=e^-_+=e^\pm_\pm=0$ yields a representation that is similar to the one found in \cite{santi1}:
$\rho(v^*)=-\lambda\Gamma_+\,\hat\otimes\,v$, $\rho(e_+)=-\lambda\Gamma^+\,\hat\otimes\,\Pi^\pm$, $\rho(w)=-\lambda\sigma_-\,\hat\otimes\,\Pi^\pm w-\lambda\sigma_+\,\hat\otimes\,w\Pi^\mp$, and $\rho(e_-)=-\lambda\Gamma_-\,\hat\otimes\,\Pi^\mp+\Gamma_+\,\hat\otimes\,\Pi^\mp$.

This choice is the same as taking $c=d=0$ in (\ref{19})-(\ref{22}). Then (\ref{27}) reduces to $-2\alpha(e\Pi^\mp v+v\Pi^\pm e)=-Bv$. From Lemma \ref{root} we get that the solutions must obey $ e\Pi^\mp=\beta\Pi^\mp$ and $-\Pi^\mp e=-\beta\Pi^\mp$, i.e.\ $e=\beta\Pi^\mp +\Pi^\pm e'\Pi^\pm$ which is only possible for $B=-2\lambda\ei$ with $\lambda=-\alpha\beta$. Again, $e^\pm_\pm=\Pi^\pm e'\Pi^\pm$ is the same free parameter we found before.

Regardless of our assumption that led to Proposition \ref{alphanotzero} we may consider $\alpha=0$ and furthermore $\beta=\rho_0=0$ as well as $c^\pm_\mp=d^\mp_\pm=e^+_-=e^-_+=e^\pm_\pm=0$. This leads to a rather simple representation associated to $B=-2\lambda\ei$, also covered in Theorem \ref{alphazero}, namely
$\rho(v^*)=-\lambda\Gamma_+\,\hat\otimes\,v$, $\rho(e_+)=0$, $\rho(w)=\sqrt{\frac{\lambda}{2}}\,\Gamma_+\,\hat\otimes\, w $, $\rho(e_-)=-\sqrt{2\lambda}\,\sigma\,\hat\otimes\,\Pi^\mp$. The quadratic Clifford pair that defines this representation is pseudo-monomial in the sense of Remark \ref{def10}.
\end{rem}

\subsection{Non-simple Clifford representations of $M_B$}

For any non-simple Clifford representation the restriction of $\rho$ from $\mathfrak{g}\oplus\mathfrak{h}'$ to $\mathfrak{g}$ is a simple one. 
Therefore, the non-simple representations can be obtained by the simple ones by demanding the further compatibility with $\mathfrak{h}'$. As stated before, for a CW-space $M_B$ defined by the symmetric map $B$ the algebra $\mathfrak{h}'$ is a subalgebra of $\mathfrak{so}_B(V)$ which can be written as $\bigoplus_{\alpha} \mathfrak{so}(V_\alpha)$ where $V_\alpha$ are the distinct eigenspaces of $B$. Due to the nature of the Clifford representation $\rho$ we have $\rho(A)=A$ when we consider $A\in \cl_2(V)$ such that the further relations are
\begin{align}
[\rho(A),\rho(e_+)]&=0\,, \\
[\rho(A),\rho(e_-)]&=0\,, \\
[\rho(A),\rho(v^*)]&=\rho( (Av)^*)\,,\\
[\rho(A),\rho(w)]&=\rho(Aw)\,.
\end{align}
Regardless what comes afterwards, (\ref{19})-(\ref{22}) yield that the parameters that define the Clifford representation $\rho$ have to be invariant with respect to $\mathfrak{h}'$, i.e.\ $[A,a]=[A,b]=[A,c]=[A,d]=[A,e]=0$ for $A\in\mathfrak{h}'$. 
\begin{prop}\label{non-simple}
\begin{enumerate}
\item
The simple Clifford representations of Theo\-rem \ref{alphazero} extend to non-simple ones if and only if $c$, $d$ and $e$ are invariant with respect to $\mathfrak{h}'\subset\mathfrak{so}_B(V)$.  
\item
The simple Clifford representations of Theorem \ref{alphanotzero} extend to non-simple ones for all choices of sub algebras of $\mathfrak{so}(V)$ for the special choices of parameters as discussed in Remark \ref{rem-special}.
\end{enumerate}
\end{prop}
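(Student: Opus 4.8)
The plan is to derive Proposition \ref{non-simple} directly from the structure of the simple Clifford representations together with the four bracket conditions $[\rho(A),\rho(e_+)]=0$, $[\rho(A),\rho(e_-)]=0$, $[\rho(A),\rho(v^*)]=\rho((Av)^*)$, and $[\rho(A),\rho(w)]=\rho(Aw)$ for $A\in\mathfrak{h}'\subset\mathfrak{so}_B(V)$. The first observation to record is that $\mathfrak{h}'$ consists of skew-symmetric endomorphisms of $V$ that commute with $B$; hence, under the inclusion (\ref{incl-cl}), $\rho(A)=\mathbbm{1}\,\hat\otimes\,A$ acts blockwise and commutes with $\Gamma(v^*)=\frac{1}{2}\Gamma_+\,\hat\otimes\,Bv$ precisely because $[A,B]=0$ gives $[A,Bv]=B(Av)$ inside $\cl_2(V)$; this shows the $(V^*)$-equation is automatic, and the $(W)$-equation $[\rho(A),\rho(w)]=\rho(Aw)$ reduces to checking the matrix entries of $\rho(w)$ in (\ref{22}), which are built from $b$ and $s_{\bar c,d}(w)$. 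So the whole problem collapses to: (a) $[A,a]=[A,b]=[A,c]=[A,d]=[A,e]=0$, and (b) compatibility of $s_{\bar c,d}$ and of the curvature-type entries with the $\mathfrak{h}'$-action on $V$.

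For part (1) of the Proposition I would argue as follows. In the $\rho(e_+)=0$ case of Theorem \ref{alphazero} we have $a=0$, $b=0$, so the only nontrivial data are $c,d,e$. Writing out $[\rho(A),\rho(e_-)]=0$ using (\ref{rep-alpha0}) yields (using $\sigma_\pm$-block decomposition and the fact that $A$ is even, so it commutes with $\sigma$) the conditions $[A,c]=0$, $[A,d]=0$, and $[A,e]=0$. Conversely, once $c,d,e$ are $\mathfrak{h}'$-invariant, $s_{\bar c,d}$ intertwines the $\mathfrak{h}'$-action because $s_{\bar c,d}(Aw)=\bar c A w - Aw d = A(\bar c w - w d) = A\,s_{\bar c,d}(w)$ using $[A,\bar c]=[A,d]=0$ and linearity; hence $[\rho(A),\rho(w)]=\rho(Aw)$ and $[\rho(A),\rho(v^*)]=\rho((Av)^*)$ both hold automatically. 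This gives the "if and only if": invariance of $c,d,e$ is necessary (read off from the $\rho(e_-)$ bracket) and sufficient (it propagates through $s_{\bar c,d}$ to all the remaining relations). I should also note that the quadratic-Clifford-pair condition $q_{\bar c,d}\big|_V=-B$ is preserved, since $\mathfrak{h}'$-invariance of $c,d$ is a constraint only on the operators, not on which symmetric map they realize.

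For part (2), the point is that in the special parameter choices of Remark \ref{rem-special} one has $c=d=0$ (or $c^\pm_\mp=d^\mp_\pm=e^+_-=e^-_+=e^\pm_\pm=0$ and $e=\beta\Pi^\mp$), so all the Clifford-algebra data entering $\rho$ are built from the projections $\Pi^\pm=\frac12(\mathbbm{1}\pm\Gamma^*)$ and scalars. Since $\Gamma^*$ is central in the even subalgebra and commutes with every $A\in\mathfrak{so}(V)\subset\cl_2(V)$, we get $[A,\Pi^\pm]=0$, hence $[A,a]=[A,b]=[A,e]=0$ for every $A$, with no restriction on $\mathfrak{h}'$ at all. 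Then the same propagation argument as in part (1) (now with $s_{\bar c,d}=0$ and the explicit entries of (\ref{rep-alphanot0}) all being scalar multiples of $\Pi^\pm v$ or $v$) shows every bracket relation holds. So these representations extend to non-simple ones for any $\mathfrak{h}'\subset\mathfrak{so}(V)$, which is exactly the claim. The main obstacle I anticipate is bookkeeping: carefully expanding the $[\rho(A),\rho(e_-)]=0$ relation with the full eight-parameter (or half-spinor-block) form of $\rho(e_-)$ and confirming that it is genuinely equivalent to the componentwise commutation $[A,c]=[A,d]=[A,e]=0$ rather than something weaker or stronger — this requires using that $A$ is even (so it does not mix the $\sigma_\pm$ and $\Gamma_\pm$ parts) and that the $\Gamma_+\,\hat\otimes\,e$ term's bracket with $\rho(A)$ is $\Gamma_+\,\hat\otimes\,[A,e]$; everything else is a routine check once that structural point is isolated.
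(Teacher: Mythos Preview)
Your proposal is correct and follows essentially the same approach as the paper: the paper's argument is the short paragraph immediately preceding the proposition, which writes down the four bracket relations with $\rho(A)$, observes that $\rho(A)=\mathbbm{1}\,\hat\otimes\,A$ since $A\in\cl_2(V)$ is even, and concludes directly from (\ref{19})--(\ref{22}) that the conditions reduce to $[A,a]=[A,b]=[A,c]=[A,d]=[A,e]=0$. Your write-up is simply a more explicit version of the same computation, including the check that invariance of $c,d$ makes $s_{\bar c,d}$ an $\mathfrak{h}'$-intertwiner and the observation that in the special case of Remark~\ref{rem-special} all surviving parameters are scalars and $\Pi^\pm$, hence automatically $\mathfrak{so}(V)$-invariant.
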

\begin{rem}\label{rem-soB-invariant}
The invariance of an element of the Clifford algebra $\cl(V)$ with respect to certain subalgebras of $\mathfrak{so}(V)$ is reflected in a special form of this element.
\begin{itemize}
\item
Let $V=V_1\oplus V_2$ be an orthogonal decomposition of $V$. Then $c\in\cl(V)=\cl(V_1)\,\hat\otimes\,\cl(V_2)$ is invariant with respect to $\mathfrak{so}(E_1)\subset\mathfrak{so}(V)$ if and only if $c\in \cl(V_2)+\Gamma_I\cl(V_2)$ with $\Gamma_I=\Gamma_{i_1}\cdots\Gamma_{i_{\dim(V_1)}}$ being proportional to the volume element of $V_1$ in $\cl(V_1)\subset\cl(V)$. 
\item
Symmetrizing this example and extending to more than two factors yields the following generalization.  
Consider $\mathfrak{h}'=\bigoplus_\alpha\mathfrak{so}(V_\alpha)$ with $\bigoplus_\alpha V_\alpha =V$ being an orthogonal decomposition. Denote by $\Gamma_{I_\alpha}$ some multiple of the volume form of $V_\alpha$ in $\cl(V)$. Then $c\in\cl(V)={\widehat\bigotimes}_\alpha \cl(V_\alpha)$ is invariant with respect to $\mathfrak{h}'$ if and only if the homogeneous components of $c$ are proportional to products of volume forms of the different factors. For example, the elements that yield quadratic Clifford pairs as considered in Propositions \ref{monomial-th}, \ref{prop2}, \ref{hom}, and \ref{hom-p} are of this type. 
\end{itemize}
\end{rem}

\subsection{Restrictions of the Clifford module}\label{restr}
\subsubsection{Canonical restrictions}

When we talk about Clifford maps and Clifford representations $\rho$ we may ask if we are able to consider subspaces $S'$ of the Clifford module $S$ such that 
the restriction to $S'$ 
\begin{itemize} 
\item obeys $\rho(x)S'\subset S'$ for all $x\in V^*\oplus W$ or 
\item $\rho$ is a representation of $V^*\oplus W$ on $S'$
\end{itemize}
although $S'$ is not a Clifford module. Of course, for a Clifford represenation $\rho$ both conditions coincide.
As we mentioned in Remark \ref{rem:restrKill} Clifford maps that yield represenations on the restrictions are strongly related to connections for which the space of parallel sections of the associated spinor bundle doesn't have maximal dimension. We will call a  restriction of $S$ that is compatible with the action of $\mathfrak{so}(V)\subset\mathfrak{so}(W)$ a {\em canonical restriction}. In particular, such restrictions are compatible with any non-simpli\-ci\-ty factor $\mathfrak{h}'\subset\mathfrak{so}(V)$.

If $\dim(V)$ is odd, there are two subspaces defined by the chiral structure on the two-dimensional factor of $W$. If we use the notations of Section \ref{solvable}, i.e.\ $S_2^{\pm}={\rm ker}(\Gamma_\mp)={\rm ker}(\sigma_{\mp})={\rm Eig}(\sigma,\pm)$, these subspaces are given by $S_2^-\otimes S(V)=S(V)\oplus\{0\}$ and $S^+_2\otimes S(V)=\{0\}\oplus S(V)$.

If $\dim(V)$ is even we have a decomposition into four canonical subspaces because each of the former is further decomposed with respect to the chiral structure on the second factor. The possible spaces are  
$S_2^+\otimes S^+(V)=\{0\}\oplus S^+(V)$, 
$S_2^-\otimes S^-(V)=S^-(V)\oplus\{0\}$, 
$S_2^+\otimes S^-(V)=\{0\}\oplus S^-(V)$, and 
$S_2^-\otimes S^+(V)=S^+(V)\oplus\{0\}$. In particular, the first two sum up to 
$S^+(W)=S^-(V)\oplus S^+(V)$ and the remaining ones to 
$S^-(W)=S^+(V)\oplus S^-(V)$.

We will discuss these two cases separately with regard to Section \ref{sec:simple-irred}.

{\em $\dim(V)$ odd:}\ \ 
In this case we consider (\ref{rep-alpha0}) from Theorem \ref{alphazero}. The reduction to $S_2^-\otimes S(V)$ immediately implies $S'$ to be trivial as the action of $V^*$ shows. The reduction to $S^+_2\otimes S(V)$ yields a trivial action of $V^*$ and $V$, and $\rho(e_-)(\xi_1)=\bar c\xi_1$. In particular any Clifford map yields a representation on such reduction. 

{\em $\dim(V)$ even:}\ \ 
Firstly, we consider Clifford maps of the form (\ref{rep-alpha0}) and go through the different cases of restrictions. For this we write it in terms of the canonical decomposition: 
\begin{equation}
\begin{gathered}
\rho(v^*)\vec\xi =\frac{1}{\sqrt{2}}
		\begin{pmatrix}
				Bv\xi_2^- \\ Bv\xi_2^+ \\ 0 \\ 0
		\end{pmatrix},\quad 
\rho(w)\vec\xi  = -\frac{1}{\sqrt{2}}
		\begin{pmatrix}
				s_{\bar c_-^+,d^-_+}(w)\xi_2^+ +s_{\bar c_+^+,d_-^-}(w)\xi_2^-\\
				s_{\bar c_-^-,d_+^+}(w)\xi_2^+ +s_{\bar c_+^-,d_-^+}(w)\xi_2^-\\0\\0 
		\end{pmatrix},\\ 
\rho(e_-) \vec\xi = 
		\begin{pmatrix}
			\bar c^+_+\xi_1^+ + \bar c^+_-\xi_1^- + \sqrt{2}e^+_+\xi_2^+ + \sqrt{2}e_-^+\xi_2^-\\
			\bar c^-_+\xi_1^+ + \bar c^-_-\xi_1^- + \sqrt{2}e^-_+\xi_2^+ + \sqrt{2}e_-^-\xi_2^-\\
			d_+^+\xi_2^+ + d_-^+\xi_2^-\\
			d^-_+\xi_2^+ + d_-^-\xi_2^-\\
		\end{pmatrix}. 
\end{gathered}			 
\end{equation}
We restrict to the upper sign and discuss the different cases.

{$\bullet$ [$S'=S(V)\oplus S^\mp(V)$, i.e.\ $\xi_2^\pm=0$] }\ 
We consider the upper sign and see that $\rho$ restricts to a representation if and only if $d^+_-=0$. In this case we have
\begin{equation*}
\begin{gathered}
\rho(v^*){\begin{pmatrix}\xi_1^+\\\xi_1^-\\\xi_2^-\end{pmatrix}}
	 =\frac{1}{\sqrt{2}}
		\begin{pmatrix}
				Bv\xi_2^- \\ 0 \\ 0
		\end{pmatrix},\quad 
\rho(w){\renewcommand{\arraystretch}{1.3}\begin{pmatrix}\xi_1^+\\\xi_1^-\\\xi_2^-\end{pmatrix}}
	 = -\frac{1}{\sqrt{2}}
		\begin{pmatrix}
				 s_{\bar c_+^+,d_-^-}(w)\xi_2^-\\
				 c_+^-w \xi_2^-\\0 
		\end{pmatrix}, 	\\		
\rho(e_-) {\renewcommand{\arraystretch}{1.3}\begin{pmatrix}\xi_1^+\\\xi_1^-\\\xi_2^-\end{pmatrix}} = 
		\begin{pmatrix}
			\bar c^+_+\xi_1^+ + \bar c^+_-\xi_1^- + \sqrt{2}e_-^+\xi_2^-\\
			\bar c^-_+\xi_1^+ + \bar c^-_-\xi_1^- + \sqrt{2}e_-^-\xi_2^-\\
			d_-^-\xi_2^-\\
		\end{pmatrix} .
\end{gathered}			 
\end{equation*}
In the case of the lower sign the restriction of $\rho$ yields a representation only if $d^-_+=0$ and is treated similarly with an adapted change of notation.

\begin{rem}\label{rem:restr}
In this situation let us consider the quadratic Clifford pairs from Section~\ref{sec:qcp}.

For a  monomial quadratic Clifford pair the calculation above only holds for $\hat I$ even. In this case $c_+^-=c_-^+=0$ as well, such that only the first component of $\rho(v^*)\vec\xi$ and $\rho(w)\vec\xi$ survives.
The same is true for $(c,d)$ pseudo-monomial and even, i.e.\ $c=\pm d=(\alpha_+\Pi^++\alpha_-\Pi^-)\Gamma_I$ or even generalized monomial with $c=d=\sum_\alpha c_\alpha\Gamma_{I_\alpha}$. Such pairs yield a representation on the reduction only if the pair is associated to $B$, i.e.\ $q_{c,d}(e_\mu)\xi= B(e_\mu)\xi$ for all $\xi\in S'$.

The situation is different for a pseudo-monomial quadratic Clifford pair with $\hat I$ odd, e.g.\ $c=\alpha(\gamma+\Gamma^*)\Gamma_I,d=\beta(\gamma-\Gamma^*)\Gamma_I$. We have $d^+_-=\frac{\alpha(1-\gamma)}{2}\Pi^+\Gamma_I$, such that $\gamma=1$ is fixed. Therefore, we are left with $c=c_-^+=\alpha\Pi^+\Gamma_I$ and $d=d_+^-=\beta\Pi^-\Gamma_I$ and $B=0$ is mandatory if we look for representations.
\end{rem}

{$\bullet$ [$S'=S^\mp(W)=S^\pm(V)\oplus S^\mp(V) $, i.e.\ $\xi_1^\mp=\xi_2^\pm=0$] }\ 
The restriction to $\xi^\mp_1=0$ immediately yields a further restriction to $\xi_2^\pm=0$ due to the action of $V^*$.  Therefore, we are left with a special subcase of the first reduction: $S'=S^\mp(W)$. In particular, the consequences of Remark \ref{rem:restr} still hold.

{$\bullet$ [$S'=S(V)\oplus \{0\}$, i.e.\ $\xi_2^+=\xi^-_2=0$] }\  
This case is treated analogously to the odd dimensional case, i.e.\ the action of $\RR_-$ alone survives and is given by $\rho(e_-)(\xi_1)=\bar c\xi_1$.  

{$\bullet$ [remaining cases] }\ 
All further canonical reductions of $S$ yield further subcases of the discussed ones or forces $S'$ to be trivial.

Next, we turn to (\ref{rep-alphanot0})  of Theorem \ref{alphanotzero} and do as before. Here we restrict to Clifford representations. We recall
\begin{equation}\tag{\ref{rep-alphanot0}}
\begin{gathered}
\rho(v^*)\vec\xi =-\sqrt{2}\lambda
		{\renewcommand{\arraystretch}{1.3}\begin{pmatrix}
				v\xi_2^- \\ v\xi_2^+ \\ 0 \\ 0
		\end{pmatrix}},\quad  
\rho(e_+)\vec\xi = \alpha 
		{\renewcommand{\arraystretch}{1.3}\begin{pmatrix}
				\xi_2^+ \\ 0\\ 0 \\ 0
		\end{pmatrix}},\\
\rho(w)\vec\xi   =  
		{\renewcommand{\arraystretch}{1.3}\begin{pmatrix}
				 -\alpha w\xi_1^- + \sqrt{\alpha\beta+\lambda} w\xi_2^- 
				 +\frac{1}{\sqrt{2}} s_{\bar c_-^+,d_+^-}(w)_+^+\xi_2^+  \\
				 \sqrt{\alpha\beta+\lambda} w\xi_2^+ \\
				 0\\
				 \alpha w\xi_2^+ 
		\end{pmatrix}}, \\		
\rho(e_-)\vec\xi =  
		{\renewcommand{\arraystretch}{1.3}\begin{pmatrix}
		  	\rho_0\xi_1^+	+ \bar c_-^+\xi_1^- +\sqrt{2} e_+^+\xi_2^+ +\sqrt{2} e_-^+\xi_2^- \\
		  	\rho_0\xi_1^-	+\sqrt{2}\beta\xi_2^- -\sqrt{2(\alpha\beta+\lambda)}\xi_1^-+\sqrt{2} e_+^-\xi_2^+ \\
		  	\rho_0\xi_2^+ \\
		  	\rho_0\xi_2^- -\sqrt{2}\alpha\xi_1^- +\sqrt{2(\alpha\beta+\lambda)}\xi_2^- +d_+^-\xi_2^+
		\end{pmatrix}}.	
\end{gathered}	
\end{equation}
and again go through the different cases.

{$\bullet$ [$S'=S(V)\oplus S^-(V)$, i.e.\ $\xi_2^+=0$] } 
In contrast to the preceding discussion the symmetry in $\xi_2^+$ and $\xi_2^-$ is broken, because the projections on the two factors enter into the representation. For $\xi^+_2=0$ we see that the action of $\RR_+$ is trivial. We are left with
\begin{equation*}
\begin{gathered}
\rho(v^*){\renewcommand{\arraystretch}{1.3}\begin{pmatrix}\xi_1^+\\\xi_1^-\\\xi_2^-\end{pmatrix}}
	 =-\sqrt{2}\lambda
		{\renewcommand{\arraystretch}{1.3}\begin{pmatrix}
				v\xi_2^- \\ 0  \\ 0
		\end{pmatrix}}\,, 
\ 
\rho(w) {\renewcommand{\arraystretch}{1.3}\begin{pmatrix}\xi_1^+\\\xi_1^-\\\xi_2^-\end{pmatrix}}
	=  
		{\renewcommand{\arraystretch}{1.3}\begin{pmatrix}
				 -\alpha w\xi_1^- +\sqrt{\alpha\beta+\lambda} w\xi_2^-   \\
				 0 \\
				 0 
		\end{pmatrix}}\,,\\		
\rho(e_-){\renewcommand{\arraystretch}{1.3}\begin{pmatrix}\xi_1^+\\\xi_1^-\\\xi_2^-\end{pmatrix}}
	 =  
		{\renewcommand{\arraystretch}{1.3}\begin{pmatrix}
		  		\rho_0\xi_1^+	+ c_-^+\xi_1^- +\sqrt{2} e_-^+\xi_2^- \\
		  		\rho_0\xi_1^-	+\sqrt{2}\beta\xi_2^- -\sqrt{2(\alpha\beta+\lambda)}\xi_1^-\\
		  		\rho_0\xi_2^- -\sqrt{2}\alpha\xi_1^- +\sqrt{2(\alpha\beta+\lambda)}\xi_2^-
		\end{pmatrix}}\,.	\\
\end{gathered}	
\end{equation*}
This is example is not covered by Theorem \ref{alphazero} although the action of $\RR_+$ is trivial because $b$ in (\ref{22}) does not vanish. This is of course no contradiction but a consequence of $S'$ not being a Clifford module. Therefore, identities (\ref{23})-(\ref{27}) only have to be satisfied after being applied to the subset $S'\subset S$. In fact, the above is a solution of this relaxed condition.

{$\bullet$ [$S'=S^-(W)=S^+(V)\oplus S^-(V)$, i.e.\ $\xi_2^+=\xi_1^-=0$] }\ 
For $\xi_1^-=0$ the action of $V^*$  immediately yields $\xi_2^+=0$. This is a possible reduction for $\beta=0$ as can be seen from the case before. 

{$\bullet$ [$S'=S^+(V)\oplus\{0\}$, i.e.\ $\xi_2^+=\xi_2^-=\xi_1^-=0$] }\ 
Starting with $\xi_2^-=0$ and assuming $\alpha\neq 0$ the actions of $V$ and $\RR_-$ immediately yield a further reduction to $\xi_2^+=\xi_1^-=0$. But then, again from the first subcase, $V$, $V^*$, and $\RR_+$ act trivially and $\rho(e_-)(\xi_1^+)=\rho_0 \xi_1^+$. 

{$\bullet$ [remaining cases] }\  
Starting with $\xi_1^+=0$ the action of $V^*$ yields $\xi_2^-=0$ such that $S'=0$ in this case.

\subsubsection{A note on non-canonical restrictions}

Up to now we discussed canonical restrictions but there are interesting non canonical restrictions, too. A basic example for a Clifford representation cf.\ Theorem \ref{alphazero} is obtained as follows. Consider a monomial quadratic Clifford pair $(c,d)$ with $c=\alpha\Gamma_I$, $d=\beta\Gamma_I$. Then $\Gamma_I^2=\sigma_I\mathbbm{1}$ such that $X_\pm^I:= \frac{1}{2}(\mathbbm{1}\pm \sqrt{\sigma_I} \,\Gamma_I)$ are the two projections on the $\pm1$-eigenspaces of $\sqrt{\sigma_I}\,\Gamma_I$. Then 
\begin{equation}
S':=S(V)\oplus X_+^IS(V)\subset S
\end{equation} 
is a possible restriction such that $\rho$ remains a representation. This restriction is not compatible with the action of $\mathfrak{so}(V)$, thus not canonical, but compatible with the action of 
$\mathfrak{so}(\hat I)\oplus\mathfrak{so}(n-\hat I)
	=\mathfrak{so}_B(V)\subset\mathfrak{so}(V)$. 

This example can be extended to pairs $(c,d)$ according to the approach in Proposition \ref{moregen1} that do not yield quadratic Clifford pairs. Therefore, we consider a pair $c=(\alpha\Gamma_I+\beta\Gamma_J)\Gamma_K$, $d=(\alpha'\Gamma_I+\beta'\Gamma_J)\Gamma_K$. This pair obeys 
\begin{equation}\label{1}
q_{c,d}(e_\mu)=\alpha_+^\mu\Gamma_\mu X_+^{IJ} + \alpha_-^\mu\Gamma_\mu X_-^{IJ}
\end{equation}
where $X^{IJ}_\pm=\frac{1}{2}(\mathbbm{1}\pm \imath_{IJ}\Gamma_{IJ})$. Here $\imath_{IJ}^2=\sigma_{IJ}=(-1)^{\hat I\hat J}\sigma_I\sigma_J$ such that $X_\pm^{IJ}$ are the projections on the $\pm1$-eigenspaces of $\imath_{IJ}\Gamma_{IJ}$. The numbers  $\alpha_\pm^\mu$ run through the eight values $\alpha\pm\beta\pm\alpha'\pm\beta'$.

We consider the restriction
\begin{equation}
S'=\ker(\Gamma_+\otimes X_-^{IJ})S=S(V)\oplus X^{IJ}_+S(V)\,.
\end{equation}
Then $\rho$ restricted to $S'$ is a representation if and only if it is compatible with the action of $\RR_-$, i.e.\ $d X^{IJ}_+ \xi\in X^{IJ}_+S(V)$ or $X^{IJ}_-dX^{IJ}_+=0$ for $\xi\in S(V)$. A careful calculation yields 
\begin{equation}
4 X^{IJ}_\mp d X_\pm^{IJ} = \big(1-(-1)^{\hat I\hat J+(\hat I+\hat J)\hat K}\big)
		\big((\alpha'\mp\beta'\imath_{IJ}\sigma_J)\Gamma_{IK}
			+(\beta'\mp\alpha'\imath_{IJ}\sigma_I(-1)^{\hat I\hat J})\Gamma_{JK}\big)
\end{equation}
which vanishes if and only if 
\begin{equation}
\alpha'=\pm\imath_{IJ}\sigma_I\beta'
	\quad\text{ or } \quad 
(-1)^{\hat I\hat J+(\hat I+\hat J)\hat K}=1\,.
\end{equation}
In the first case we get the more special result $d X_\pm=0$ such that the construction of the Clifford map is independent of $d$ and we may assume $d=0$. In this case we have 
\begin{equation}
q_{c,d}(e_\mu)=c^2\Gamma_\mu
		=(\sigma_{IK}\alpha^2+\sigma_{JK}\beta^2)\Gamma_\mu 
		=:-\lambda\Gamma_\mu 
\end{equation}
such that for $\vec{\xi}=(\xi_1,\xi_2)\in S'$ the Clifford map $\rho$ is given by 
\begin{equation}
\begin{aligned}
 \rho(e_-)\vec\xi    &=(c\xi_1,0)^t\,,
&\rho(e_\mu^*)\vec\xi&=\frac{\lambda}{\sqrt{2}}(\Gamma_\mu\xi_2,0)^t\,,
&\rho(e_\mu)\vec\xi  &=-\frac{1}{\sqrt{2}}(c\Gamma_\mu\xi_2,0)^t\,.
\end{aligned}
\end{equation}
This is in fact a representation on $S'$ for $B=\lambda\mathbbm{1}$.

In the generic situation the pair $(c,d)$ and the associated Clifford map $\rho$ are not compatible with the action of $\mathfrak{so}(V)$ but with the action of 
$\mathfrak{so}(\hat I)\oplus\mathfrak{so}(\hat J)\oplus\mathfrak{so}(\hat K) \oplus\mathfrak{so}(n-\hat I-\hat J-\hat K)\subset\mathfrak{so}(V)$. $\rho$ is a representation on $S'$ if $B$ is defined by (\ref{1}), i.e.\ $B(e_\mu)X^{IJ}_+= q_{c,d}(e_\mu) X^{IJ}_+$.

\section{Distinguished quadratic Clifford pairs}

\subsection{A compatibility condition}

We show in this section, that the quadratic Clifford pairs we constructed in Section \ref{sec:qcp}, Propositions \ref{monomial-th}, \ref{prop2}, \ref{hom}, and \ref{hom-p} yield a complete list in a particular way. 

Let $(c,d)\in\cl(V)\times\cl(V)$ be a quadratic Clifford pair associated to the symmetric map $B$, i.e.\ $B(v)=q_{c,d}(v) $ for $v\in V$. Then we know from Remark \ref{rem-soB-invariant} that $(c,d)$ is invariant with respect to $\mathfrak{so}_B(V)$ if it is from Section \ref{sec:qcp}. 
We associate to $(c,d)$ an element $\Omega_{c,d}\in V^*\otimes V^*\otimes\cl(V)$ defined by 
\begin{equation}\label{Omega}
\Omega_{c,d}(v,w):= s_{d,c}(v) w + w s_{c,d}(v)
\end{equation}
for all $v,w\in V$. 
For an orthonormal basis $\{e_\mu\}$ of $V$ (\ref{Omega}) reads as
\begin{equation}\label{Omega-ij}
\Omega_{c,d}(e_\mu,e_\nu) =
2\Gamma_{[\mu}c\Gamma_{\nu]} - \{\Gamma_{\mu\nu},d\}\,.
\end{equation}
Consider homogeneous elements $c=c_I\Gamma_I,d=d_I\Gamma_I\in\cl(V)$ and let $\theta^\mu_I=-1$ if $\mu\in I$ and $\theta^\mu_I=1$ if $\mu\not\in I$. Then
\begin{equation}\label{64}
\begin{aligned}
\Omega_{c,d}(e_\mu,e_\nu)
&= \big((\theta^\mu_I+\theta^\nu_I)(-1)^{\hat I}c_I
   -(\theta^\mu_I\theta^\nu_I+1)d_I\big)\Gamma_I\Gamma_{\mu\nu} \\
&=\begin{cases}
 -2((-1)^{\hat I}c_I+d_I)\Gamma_I\Gamma_{\mu\nu}  & \text{ if } \mu,\nu\in I\,,\\
 2((-1)^{\hat I}c_I-d_I)\Gamma_I\Gamma_{\mu\nu} & \text{ if } \mu,\nu\not\in I\,,\\
 0 & \text{ else}\,.
\end{cases}
\end{aligned}
\end{equation}
To simplify the computation let $\lambda_1,\ldots,\lambda_r$ be the distinct eigenvalues of $B$ and consider the decomposition of $V=\bigoplus_{\alpha=1}^r V_\alpha$ into the respective eigenspaces. The dimension of $V_\alpha$ is denoted by $\hat\alpha$ and let $\{e_{\alpha,\mu}\}_{\mu=1,\ldots,\hat\alpha;\alpha=1,\ldots,r}$ be an adapted orthonormal basis. Then $\Gamma_{I_\alpha}=\Gamma_{\alpha,1}\cdots\Gamma_{\alpha,\hat\alpha}$ with $\hat I_\alpha=\hat\alpha$ represents a multiple of the volume form of $V_\alpha$ in $\cl(V)$. 
A consequence of (\ref{64}) is the following proposition.
\begin{prop}\label{c+d}
Let $c,d\in\cl(V)$ be invariant with respect to $\mathfrak{so}_B(V)$. Then $\Omega_{c,d}\in \mathfrak{so}_B(V)\otimes\cl(V)$ if and only if{\,}\footnote{We consider the Clifford elements to be expanded in the basis that is adapted to the eigenspace decomposition of $V$ with respect to $B$.}
\begin{equation}\label{kl2}
\begin{aligned}
c&= c_0 +\hat c_0\Gamma^*
\\
d&= d_0 +\hat d_0\Gamma^* 
\end{aligned}
\end{equation}
if $B=\lambda\mathbbm{1}$, 
\begin{equation}\label{gl2}
\begin{aligned}
c&=(c_0  + \hat c_0\Gamma^*) + (c_1+ \hat c_1\Gamma^*)\Gamma_{I_1} \\
d&=(c_0  -(-1)^{{\rm dim}(V)}\hat c_0\Gamma^*) + (d_1 + \hat d_1\Gamma^*)\Gamma_{I_1}
\end{aligned}
\end{equation}
if the number of different eigenvalues of $B$ is two, and
\begin{equation}
\begin{aligned}\label{gr2}  
c&=(c_0 +\hat c_0\Gamma^*)+\sum_\alpha (c_\alpha+\hat c_\alpha\Gamma^*)\Gamma_{I_\alpha} 
\\
d&=(c_0 -(-1)^{{\rm dim}(V)}\hat c_0\Gamma^*) 
	+\sum_\alpha (-1)^{\hat I_\alpha} (c_\alpha-(-1)^{{\rm dim}(V)}\hat c_\alpha\Gamma^*)\Gamma_{I_\alpha} 
\end{aligned}
\end{equation}
if the number of different eigenvalues of $B$ exceeds two.
If the dimension of $V$ is odd we may choose $\hat c_\alpha=\hat d_\alpha=0$ because $\Gamma^*=\pm\ei$. 
\end{prop}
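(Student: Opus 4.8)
The plan is to combine the explicit formula (\ref{64}) for $\Omega_{c,d}$ on homogeneous elements with the description of $\mathfrak{so}_B(V)$-invariant Clifford elements from Remark~\ref{rem-soB-invariant}. Fix the eigenspace decomposition $V=\bigoplus_{\alpha=1}^r V_\alpha$ with adapted orthonormal basis $\{e_{\alpha,\mu}\}$ and volume forms $\Gamma_{I_\alpha}$, so that $\mathfrak{so}_B(V)=\bigoplus_\alpha\mathfrak{so}(V_\alpha)$. By Remark~\ref{rem-soB-invariant} an element of $\cl(V)$ is $\mathfrak{so}_B(V)$-invariant exactly when it is a linear combination of the products $\Gamma_{I_S}:=\prod_{\alpha\in S}\Gamma_{I_\alpha}$ over subsets $S\subseteq\{1,\dots,r\}$; here $\Gamma_{I_\emptyset}=\mathbbm{1}$, $\Gamma_{I_{\{1,\dots,r\}}}$ is a nonzero multiple of $\Gamma^*$, and the degree of $\Gamma_{I_S}$ is $\hat I_S=\sum_{\alpha\in S}\hat\alpha$, the index of $e_{\beta,\mu}$ lying in $I_S$ iff $\beta\in S$. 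Write $c=\sum_S c_S\Gamma_{I_S}$ and $d=\sum_S d_S\Gamma_{I_S}$. Since $\Omega_{c,d}$ is additive in $c$ and in $d$, applying (\ref{64}) componentwise shows that $\Omega_{c,d}(e_\mu,e_\nu)$ is a sum over $S$ of terms that equal $-2((-1)^{\hat I_S}c_S+d_S)\Gamma_{I_S}\Gamma_{\mu\nu}$ if $\mu,\nu\in I_S$, equal $2((-1)^{\hat I_S}c_S-d_S)\Gamma_{I_S}\Gamma_{\mu\nu}$ if $\mu,\nu\notin I_S$, and vanish otherwise; in particular $\Omega_{c,d}(e_\mu,e_\nu)$ is antisymmetric in $\mu,\nu$. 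Because a skew endomorphism of $V$ commutes with $B$ precisely when it has no entries between distinct eigenspaces, the requirement $\Omega_{c,d}\in\mathfrak{so}_B(V)\otimes\cl(V)$ is thus equivalent to $\Omega_{c,d}(e_{\beta,\mu},e_{\beta',\nu})=0$ for all $\beta\neq\beta'$ and all $\mu,\nu$.

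Next I would evaluate this off-block part. For $\beta\neq\beta'$ the above trichotomy leaves only the subsets $S$ with $\{\beta,\beta'\}\subseteq S$, contributing $-2((-1)^{\hat I_S}c_S+d_S)\Gamma_{I_S}\Gamma_{\mu\nu}$, and those with $S\cap\{\beta,\beta'\}=\emptyset$, contributing $2((-1)^{\hat I_S}c_S-d_S)\Gamma_{I_S}\Gamma_{\mu\nu}$; if exactly one of $\beta,\beta'$ lies in $S$ the term vanishes. The crucial observation is that the elements $\Gamma_{I_S}\Gamma_{\mu\nu}=\Gamma_{I_S}\Gamma_{e_{\beta,\mu}}\Gamma_{e_{\beta',\nu}}$ appearing here are, up to sign, pairwise distinct basis elements of $\cl(V)$: for $S\supseteq\{\beta,\beta'\}$ the product has index set $I_S\setminus\{e_{\beta,\mu},e_{\beta',\nu}\}$, which avoids $e_{\beta,\mu}$, while for $S\cap\{\beta,\beta'\}=\emptyset$ it has index set $I_S\cup\{e_{\beta,\mu},e_{\beta',\nu}\}$, which contains $e_{\beta,\mu}$, and on each of these two families the passage to the index set is injective since $S\mapsto I_S$ and the operations of removing or adjoining $\{e_{\beta,\mu},e_{\beta',\nu}\}$ are. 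Hence $\Omega_{c,d}(e_{\beta,\mu},e_{\beta',\nu})=0$ if and only if $d_S=-(-1)^{\hat I_S}c_S$ for every $S\supseteq\{\beta,\beta'\}$ and $d_S=(-1)^{\hat I_S}c_S$ for every $S$ with $S\cap\{\beta,\beta'\}=\emptyset$.

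Running these relations over all pairs $\beta\neq\beta'$, a fixed $S$ is subject to $d_S=-(-1)^{\hat I_S}c_S$ precisely when $|S|\geq 2$ and to $d_S=(-1)^{\hat I_S}c_S$ precisely when $|S^\complement|\geq 2$, i.e.\ $|S|\leq r-2$; when both apply, i.e.\ $2\leq|S|\leq r-2$, one gets $c_S=d_S=0$. The normal forms now follow by case distinction on $r$. For $r=1$ the only subsets are $\emptyset$ and $\{1\}$, neither is constrained, and $\Gamma_{I_{\{1\}}}\sim\Gamma^*$, which gives (\ref{kl2}). For $r=2$ only $S=\emptyset$, with $d_\emptyset=c_\emptyset$, and $S=\{1,2\}$, with $d_{\{1,2\}}=-(-1)^{\dim V}c_{\{1,2\}}$ since $\hat I_{\{1,2\}}=\dim V$, carry a constraint; rewriting $\Gamma_{I_{\{2\}}}\sim\Gamma^*\Gamma_{I_1}$ and $\Gamma_{I_{\{1,2\}}}\sim\Gamma^*$ turns this into (\ref{gl2}). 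For $r\geq 3$ the only surviving components are $S=\emptyset$, the singletons $\{\alpha\}$, the co-singletons $\{1,\dots,r\}\setminus\{\alpha\}$, and $S=\{1,\dots,r\}$, with relations $d_{\{\alpha\}}=(-1)^{\hat I_\alpha}c_{\{\alpha\}}$, $d_{\{1,\dots,r\}\setminus\{\alpha\}}=-(-1)^{\dim V-\hat\alpha}c_{\{1,\dots,r\}\setminus\{\alpha\}}$, and $d_{\{1,\dots,r\}}=-(-1)^{\dim V}c_{\{1,\dots,r\}}$; rewriting $\Gamma_{I_{\{1,\dots,r\}\setminus\{\alpha\}}}\sim\Gamma^*\Gamma_{I_\alpha}$ and $\Gamma_{I_{\{1,\dots,r\}}}\sim\Gamma^*$ yields (\ref{gr2}). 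The converse direction is read off the same equivalences, and the odd-dimensional simplification follows from $\Gamma^*=\pm\mathbbm{1}$.

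I expect the only genuinely non-routine work to lie in this last translation step: one has to keep track of the permutation signs and of the scalars $\sigma_{I_S}$ that appear when identifying $\Gamma_{I_{\{2\}}}$, $\Gamma_{I_{\{1,\dots,r\}\setminus\{\alpha\}}}$ and $\Gamma_{I_{\{1,\dots,r\}}}$ with the corresponding multiples of $\Gamma^*\Gamma_{I_1}$, $\Gamma^*\Gamma_{I_\alpha}$ and $\Gamma^*$, so as to verify that in (\ref{gl2}) and (\ref{gr2}) the coefficient of the $\Gamma_{I_\alpha}$-part of $d$ is exactly $(-1)^{\hat I_\alpha}$ times that of $c$, and that the coefficients of the $\Gamma^*$- and $\Gamma^*\Gamma_{I_\alpha}$-parts of $d$ are exactly $-(-1)^{\dim V}$ times the corresponding ones of $c$. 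Everything else is bookkeeping with the trichotomy of (\ref{64}).
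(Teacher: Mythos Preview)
Your proposal is correct and follows essentially the same approach as the paper: expand $c,d$ in the $\mathfrak{so}_B(V)$-invariant basis $\{\Gamma_{I_S}\}$, apply the trichotomy (\ref{64}) termwise, and read off the constraints coming from the off-block entries $\Omega_{c,d}(e_{\beta,\mu},e_{\beta',\nu})$ with $\beta\neq\beta'$. The paper organizes the case analysis by the number $t=|S|$ of volume factors and treats $t=0,1,r-1,r$ and $1<t<r-1$ separately, while you phrase the same thing more uniformly via the dichotomy $|S|\geq 2$ versus $|S^\complement|\geq 2$; you are also more explicit about why the contributions $\Gamma_{I_S}\Gamma_{\mu\nu}$ for different $S$ are linearly independent, which the paper takes for granted. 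These are presentational differences only.
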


\begin{proof}
We consider an orthonormal basis $\{e_\mu\}$ adapted to the eigenspace decomposition with respect to $B$ as introduced above and write $\Omega_{\mu\nu}=\Omega_{c,d}(e_\mu,e_\nu)$. 
Then $\Omega_{c,d}\in\mathfrak{so}_B(V)\otimes\cl(V)$ if and only if $\Omega_{\mu\nu}=0$ for $e_\mu,e_\nu$ belonging to different eigenspaces.
Consider $c,d$ to be invariant with respect to $\mathfrak{so}_B(V)$ such that 
\[
c=\sum_{0\leq t\leq r}\sum_{\alpha_1<\cdots<\alpha_t}
			c_{\hat\alpha_1\ldots \hat\alpha_t}\Gamma_{I_{\alpha_1}}\cdots\Gamma_{I_{\alpha_t}}
\]
and $d$ analogously, see Proposition \ref{non-simple} and  Remark \ref{rem-soB-invariant}. 
Because of (\ref{Omega-ij}) we may apply (\ref{64}) to each homogeneous component of $c$ and $d$. 
First, we notice that for the term with $t=0$, i.e.\ the scalar part, only the second case of (\ref{64}) is present such that $c_\emptyset=d_\emptyset$. 
Next consider a term with $1<t<r-1$, e.g.\  $\Gamma_I=\Gamma_{I_{\alpha_1}}\Gamma_{I_{\alpha_2}}\cdots\Gamma_{I_{\alpha_t}}$. Take $e_\mu\in E_{\alpha_1},e_\nu\in E_{\alpha_2}$, then the first case of (\ref{64}) implies $(-1)^{\hat I}c_I+d_I=0$. Similarly, consider $e_\mu\in E_{\beta_1},e_\nu\in E_{\beta_2}$ with $\beta_1\neq\beta_2$ and $(I_{\beta_1}\cup I_{\beta_2})\cap I=\emptyset$ then the second case in (\ref{64}) implies $(-1)^{I}c_I+d_I=0$ such that $c_I=d_I=0$. 
Now consider $t=1$. If there exists more than two eigenspaces the second case in (\ref{64}) implies $(-1)^{\hat I}c_I-d_I=0$. If there exists exactly two eigenspaces $E_1$ and $E_2$ there is no such restriction because $\mu,\nu\not\in I_1$ is equivalent to $\mu,\nu\in I_2$. 
Next consider $t=r$. Then $\Gamma_I$ represents the volume form of $V$ and only the first case in (\ref{64}) is present such that $(-1)^{\hat I}c_I+d_I=0$. Last but not least consider $t=r-1$ for $r>3$. Then case two in (\ref{64}) yields no restriction but the first requires $(-1)^{\hat I}c_I+d_I=0$. 
In the case where $B\sim \ei$ there are no restrictions on $c$ and $d$.
The results are collected in (\ref{kl2})-(\ref{gr2}).
\end{proof}
\begin{rem}\label{rem:simple}
A similar discussion as before yields that $\Omega$ vanishes identically if and only if $c=\alpha+v, d=\alpha-v$ in odd dimensions or $c=(\alpha+v)+(\beta+w)\Gamma^*,d=(\alpha-v)-(\beta-w)\Gamma^*$ in even dimension. Here $\alpha,\beta$ are scalars and $v, w$ are orthogonal vectors.
\end{rem}
If we restrict Proposition \ref{c+d} to quadratic Clifford pairs we get the following result.
\begin{thm}\label{thm-c+d}
Let $(c,d)$ be a quadratic Clifford pair associated to the symmetric map $B$. Then $\Omega_{c,d}\in \mathfrak{so}_B(V)\otimes\cl(V)$  if and only if $(c,d)$ is
\begin{itemize}
\item 
a pair of scalars or a pair of pseudo scalars if $B\sim\mathbbm{1}$, 
\item
a monomial or a pseudo-monomial quadratic Clifford pair according to Proposition \ref{monomial-th} if $B$ admits two different eigenvalues, and
\item
a generalized monomial quadratic Clifford pair according to Propositions \ref{hom} and \ref{hom-p} if $B$ admits more than two different eigenvalues. 
\end{itemize}
In fact, the multiplicity of the eigenvalues is even for all up to at most two exceptions. 
\end{thm}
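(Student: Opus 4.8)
The plan is to combine Proposition~\ref{c+d}, which says exactly when an $\mathfrak{so}_B(V)$-invariant pair $(c,d)$ satisfies $\Omega_{c,d}\in\mathfrak{so}_B(V)\otimes\cl(V)$, with the classification of quadratic Clifford pairs from Section~\ref{sec:qcp}. The ``if'' direction is the quick half: each pair in the list --- a pair of scalars or of pseudoscalars, a monomial or pseudo-monomial pair of Proposition~\ref{monomial-th}, a generalized monomial pair of Proposition~\ref{hom} or~\ref{hom-p} --- is $\mathfrak{so}_B(V)$-invariant by Remark~\ref{rem-soB-invariant}, since its homogeneous components are proportional to products of the volume forms $\Gamma_{I_\alpha}$ of the eigenspaces $V_\alpha$ of $B$; and a direct inspection shows that each such pair has precisely the shape (\ref{kl2}), (\ref{gl2}) or (\ref{gr2}) prescribed by Proposition~\ref{c+d} for a $B$ with one, two, or more than two distinct eigenvalues --- the sign rule $d_{I_\alpha}=(-1)^{\hat I_\alpha}c_{I_\alpha}$ on the non-pseudoscalar part and the accompanying $\Gamma^*$-sign rule are exactly the constraints already built into the construction of these pairs. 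Proposition~\ref{c+d} then yields $\Omega_{c,d}\in\mathfrak{so}_B(V)\otimes\cl(V)$.

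For the converse, let $(c,d)$ be a quadratic Clifford pair associated to $B$ with $\Omega_{c,d}\in\mathfrak{so}_B(V)\otimes\cl(V)$. Following the discussion preceding Proposition~\ref{c+d} one first reduces to the case that $(c,d)$ is $\mathfrak{so}_B(V)$-invariant, so that Proposition~\ref{c+d} forces $c$ and $d$ into one of the shapes (\ref{kl2}), (\ref{gl2}), (\ref{gr2}). I would then impose, case by case on the number of distinct eigenvalues of $B$, that $(c,d)$ really be a quadratic Clifford pair realizing $B$. If $B$ has more than two eigenvalues, (\ref{gr2}) differs from the ansatz of Proposition~\ref{hom} (resp.\ of Proposition~\ref{hom-p} once pseudoscalar summands are admitted in even dimension) only by a scalar part, normalized away by Remark~\ref{a=0}, and a pure $\Gamma^*$ (full-volume-form) summand, which the quadratic-Clifford-pair condition forces to vanish; modulo those, the classification there identifies $(c,d)$ as a generalized monomial pair. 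If $B$ has two eigenvalues, (\ref{gl2}) --- after the same normalization and, in even dimension, after rewriting $\Gamma^*\Gamma_{I_1}$ as a multiple of $\Gamma_{I_2}$ --- becomes a combination of at most three invariant monomials built from the volume forms $\Gamma_{I_1}$, $\Gamma_{I_2}$ of the two eigenspaces, and a direct evaluation of $q_{c,d}$ (overlapping with the two-term analysis behind Proposition~\ref{moregen1} and refined by Remark~\ref{moregen2}, where the only non-degenerate two-eigenvalue case not collapsing to a monomial is Case~2b) leaves precisely the monomial and pseudo-monomial pairs of Proposition~\ref{monomial-th}. If $B\sim\mathbbm{1}$, a short computation of $q_{c,d}$ on $c=c_0+\hat c_0\Gamma^*$, $d=d_0+\hat d_0\Gamma^*$ gives $(\hat c_0+\hat d_0)(d_0-c_0)=0$, solved either by $d_0=c_0$ (a pair of pseudoscalars after the shift of Remark~\ref{a=0}) or by $\hat c_0=-\hat d_0$ (a linear pair, hence by Lemma~\ref{root} and Proposition~\ref{prop2} equivalent to a pair of scalars).

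The final assertion is then read off from the propositions just invoked: a scalar or pseudoscalar pair carries the single eigenvalue with multiplicity $\dim(V)$; a monomial or pseudo-monomial pair has multiplicities $\hat I$ and $\dim(V)-\hat I$ with $\hat I$ even in the even-type cases, so at most one of these is odd when $\dim(V)$ is odd and at most two when $\dim(V)$ is even; and a generalized monomial pair has all $\hat I_\alpha$ even by Propositions~\ref{hom} and~\ref{hom-p}, the only exceptions being a single odd summand in odd dimension (Proposition~\ref{hom}) or the two odd summands of type~(2) in Proposition~\ref{hom-p}. Hence the eigenvalue multiplicities are even up to at most two exceptions in all cases.

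The main obstacle, I expect, is the case analysis in the converse: showing that the formally slightly more general shapes (\ref{gl2}) and (\ref{gr2}) allowed by Proposition~\ref{c+d} collapse exactly onto the families of Section~\ref{sec:qcp} once the quadratic-Clifford-pair condition is imposed. Concretely this means carrying out the $q_{c,d}$-computations for those shapes --- largely parallel to, but a bit more general than, the appendix computations underlying Propositions~\ref{moregen1}, \ref{hom} and~\ref{hom-p} --- and checking that the extra $\Gamma^*$-type degrees of freedom are either normalizable away or forced to vanish, with no spurious new solutions appearing. A secondary point that needs care is the opening reduction to the $\mathfrak{so}_B(V)$-invariant case, which should be settled (or adopted as the section's standing hypothesis) before Proposition~\ref{c+d} is brought to bear.
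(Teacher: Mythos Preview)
Your plan matches the paper's own proof: apply Proposition~\ref{c+d} to constrain the shape of $(c,d)$, then impose the quadratic Clifford condition case by case and identify the result with the families of Propositions~\ref{monomial-th}, \ref{hom}, and~\ref{hom-p}. Your concern about the reduction to $\mathfrak{so}_B(V)$-invariance is apt --- the paper treats it as an implicit standing hypothesis of the section rather than deriving it --- and the scalar/pseudoscalar parts are indeed normalizable away (since $q_{c,d}\big|_V$ is independent of them under (\ref{gl2})--(\ref{gr2})) rather than forced to vanish.
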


\begin{proof}
Proposition \ref{c+d} yields a necessary form for $c$ and $d$ and we examine the quadratic Clifford condition.

If $B=-\lambda\mathbbm{1}$ we may assume $d_0=0$. If furthermore $c_0\neq 0$ in (\ref{kl2}) we get a quadratic Clifford pair if and only if $\hat c_0=-\hat d_0$. In this Situation $q_{c,d}\big|_V$ is independent of the pseudo scalar parts such that we may choose $(c,d)=(\sqrt{\lambda},0)$. If we instead assume $c_0=0$ we get a monomial solution defined by two pseudo scalars. 
If $B$ has more than one eigenvalue we see that $q_{c,d}\big|_{V}$ is independent of the scalar as well as the pseudo scalar part such that we my assume $c_0=\hat c_0=0$. 
If $B$ admits two different eigenvalues the possible values are
\begin{equation*}
c=(c_1+ c_2\Gamma^*)\Gamma_{I}\,,\quad d=(d_1 + d_2\Gamma^*)\Gamma_{I}\,,
\end{equation*} 
and they yield a quadratic Clifford pair if and only if the pair is (pseudo)-monomial, see Proposition \ref{monomial-th}.
In the case that $B$ has more than two different eigenvalues $c,d$ are given by 
\begin{equation*}
c=\sum_\alpha (c_\alpha + \hat c_\alpha\Gamma^*)\Gamma_{I_\alpha}\,,\quad
d=\sum_\alpha (-1)^{\hat I_\alpha} (c_\alpha -\hat c_\alpha\Gamma^*)\Gamma_{I_\alpha} \,,
\end{equation*}
and they form a quadratic Clifford pair if only if they are of the form described in Propositions \ref{hom} and \ref{hom-p}.
\end{proof}
We close the discussion with two non trivial algebraic properties that are proved by a straight forward calculation.
\begin{rem}
All quadratic Clifford pairs $(c,d)$ according to Theorem \ref{thm-c+d} obey
\begin{align*}
&d s_{d,c}(e_\nu)\Gamma_\mu+d\Gamma_\mu s_{c,d}(e_\nu)-s_{d,c}(e_\nu)\Gamma_\mu d-\Gamma_\mu s_{c,d}(e_\nu)d=0\\\intertext{and}
&s_{d,c}(e_\nu)s_{c,d}(e_\mu)+s_{d,c}(e_\mu)s_{c,d}(e_\nu) = B_{\mu\nu}\,.
\end{align*}
\end{rem}

\subsection{Concluding remark}

As we mentioned in the introduction the spaces and connections discussed in this text play an important role in the treatment of backgrounds of supergravity theories. For this we have to consider enlargements of the Lie algebra of infinitesimal isometries of a given space to a super Lie algebra. The odd part of it is characterized by spinors that are parallel with respect to a given connection, see \cite{Cortes, KlinkerSSKS, KlinkerTor, KlinkerDeform}, for example, in addition to the references from the introduction.
Starting from the text at hand, the next natural step is the classification of super algebras that extend the isometries of CW-spaces. Work on this is in progress and the tensor $\Omega$ apparently plays a key role in this study.


\begin{appendix}
\section{Some technical proofs}
\markright{\textsl{\textsc{Some Technical Proofs}}}

\subsection{Proofs from Section \ref{sec:qcp}}\label{App1}

We take over the notations of Section \ref{sec:qcp} and consider $c,d\in\cl(V)$ to be the sum of two monomials and of the same type. More precisely, we consider $c=(\alpha\Gamma_I+\beta\Gamma_J)\Gamma_K\neq0$ and $d=(\alpha'\Gamma_I+\beta'\Gamma_J)\Gamma_K\neq0$ with $I\cap J=I\cap K=J\cap K=\emptyset$ and set $\Gamma_I^2=\sigma_I\mathbbm{1}$ and the same for $J$ and $K$. Of course, the results will be symmetric in $I$ and $J$. We use the notation $\theta^\mu_I=1$ if $\mu\not\in I$ and $\theta^\mu_I=-1$ if $\mu\in I$. The follwoing calculations yield the proof of Proposition \ref{moregen1}. We have
\begin{align*}
s_{c,d}(\Gamma_\mu)
=\ &	\alpha\Gamma_I\Gamma_K\Gamma_\mu + \beta\Gamma_J\Gamma_K\Gamma_\mu - \alpha'\Gamma_\mu\Gamma_I\Gamma_K - \beta'\Gamma_\mu\Gamma_J\Gamma_K \\
=\ &	(\alpha-(-1)^{\hat K+\hat I}\theta^\mu_I\theta^\mu_K \alpha')\Gamma_I\Gamma_K\Gamma_\mu 
		+(\beta-(-1)^{\hat K+\hat J}\theta^\mu_J\theta^\mu_K \beta')\Gamma_J\Gamma_K\Gamma_\mu 
\end{align*}
such that 
\begin{align}
\begin{aligned} 
q_{e,f}(e_\mu)
=\ &\Big( 
		 (-1)^{\hat I\hat K}\sigma_I\sigma_K
		 (\alpha-(-1)^{\hat K+\hat I}\theta^\mu_I\theta^\mu_K \alpha')^2\\
&\quad	+(-1)^{\hat J\hat K}\sigma_J\sigma_K
		 (\beta-(-1)^{\hat K+\hat J}\theta^\mu_J\theta^\mu_K \beta')^2
	 \Big)\Gamma_\mu  \\		
& +\sigma_K\Big(  \alpha\beta\big((-1)^{\hat J\hat K} +(-1)^{\hat I\hat K+\hat I\hat J} \big)\\
&\quad	 +\alpha'\beta'\big((-1)^{\hat J\hat K} +(-1)^{\hat I\hat K+\hat I\hat J} \big)
					(-1)^{\hat J+\hat I} \theta^\mu_I\theta^\mu_J\\
&\quad		 +2 \alpha\beta' (-1)^{(\hat J+1)(\hat K+1)}\theta^\mu_J\theta^\mu_K   \\
&\quad		 +2 \alpha'\beta (-1)^{(\hat I+1)(\hat K+1)+\hat I\hat J}\theta^\mu_I\theta^\mu_K
	\Big)\Gamma_I\Gamma_J\Gamma_\mu\,. \label{formel}
\end{aligned}
\end{align}
$q_{c,d}$ restricted to $V$ is a self map if and only if the summand proportional to $\Gamma_I\Gamma_J\Gamma_\mu$ vanishes (at least, when it is present). This yields a homogeneous linear system for the four coefficients $\alpha\beta$, $\alpha'\beta'$, $\alpha\beta'$, and $\alpha'\beta$ with matrix\footnote{The rows are related to $\theta^\mu_J=-1$, $\theta^\mu_I=-1$, $\theta^\mu_I=\theta^\mu_J=\theta^\mu_K=1$, and $\theta^\mu_K=-1$. The columns are related to  $\alpha\beta$, $\alpha'\beta'$, $\alpha\beta'$, and $\alpha'\beta$.} 
\begin{multline}\small
\left({\renewcommand{\arraystretch}{1}
\begin{matrix}
(-1)^{\hat J\hat K}+(-1)^{\hat I\hat K+\hat I\hat J} 	& -(-1)^{\hat I+\hat J}((-1)^{\hat J\hat K}+(-1)^{\hat I\hat K+\hat I\hat J})\\
(-1)^{\hat J\hat K}+(-1)^{\hat I\hat K+\hat I\hat J} 	& -(-1)^{\hat I+\hat J}((-1)^{\hat J\hat K}+(-1)^{\hat I\hat K+\hat I\hat J})\\
(-1)^{\hat J\hat K}+(-1)^{\hat I\hat K+\hat I\hat J} 	& +(-1)^{\hat I+\hat J}((-1)^{\hat J\hat K}+(-1)^{\hat I\hat K+\hat I\hat J})\\
(-1)^{\hat J\hat K}+(-1)^{\hat I\hat K+\hat I\hat J} 	& +(-1)^{\hat I+\hat J}((-1)^{\hat J\hat K}+(-1)^{\hat I\hat K+\hat I\hat J})\\
\end{matrix}}\right. \\
\left.
{\renewcommand{\arraystretch}{1.3}\begin{matrix}
-2(-1)^{(\hat J+1)(\hat K+1)} 	& +2(-1)^{\hat I\hat J}(-1)^{(\hat I+1)(\hat K+1)} \\
+2(-1)^{(\hat J+1)(\hat K+1)} 	& -2(-1)^{\hat I\hat J}(-1)^{(\hat I+1)(\hat K+1)} \\
+2(-1)^{(\hat J+1)(\hat K+1)} 	& +2(-1)^{\hat I\hat J}(-1)^{(\hat I+1)(\hat K+1)} \\
-2(-1)^{(\hat J+1)(\hat K+1)} 	& -2(-1)^{\hat I\hat J}(-1)^{(\hat I+1)(\hat K+1)} \\
\end{matrix}}\right)
\end{multline}
We note that in the two special situations $c=0$ or $d=0$ the map $q_{c,d}$ is a self map if and only if $(-1)^{\hat I\hat J+\hat I\hat K+\hat J\hat K}=-1$. We then have 
$q_{c,0}(e_\mu)=(\sigma_{IK}\alpha^2+\sigma_{JK}\beta^2)\Gamma_\mu$, for example.

We solve the linear system by considering several cases by excluding the cases $c=0$ and $d=0$.

{\em Case 1a: }\ We consider the generic case $\hat K\hat I\hat J\neq0$ and $(K\cup I\cup J)^\complement\neq\emptyset$. 
We have to distinguish eight cases due to the value of $\hat K$, $\hat I$, and $\hat J$ mod 2. The matrices in these cases are\footnote{The rows and columns are as before. In addition, the subscript indicates the value of $(\hat K,\hat I,\hat J)\in\ZZ^3_2$.} 
\begin{equation}\label{alle}\begin{aligned}
& \left( \begin{smallmatrix} 1&-1& 1&-1\\ 1&-1&-1& 1\\ 1& 1&-1&-1\\ 1& 1& 1& 1\end{smallmatrix} \right)_{000},
&&\left( \begin{smallmatrix} 1& 1&-1&-1\\ 1& 1& 1& 1\\ 1&-1& 1&-1\\ 1&-1&-1& 1\end{smallmatrix}\right)_{001},
&&\left( \begin{smallmatrix} 1& 1& 1& 1\\ 1& 1&-1&-1\\ 1&-1&-1& 1\\ 1&-1& 1&-1\end{smallmatrix}\right)_{010},\\
& \left( \begin{smallmatrix} 0& 0&-1&-1\\ 0& 0& 1& 1\\ 0& 0& 1&-1\\ 0& 0&-1& 1\end{smallmatrix}\right)_{011},
&&\left( \begin{smallmatrix} 1&-1&-1& 1\\ 1&-1& 1&-1\\ 1& 1& 1& 1\\ 1& 1&-1&-1\end{smallmatrix}\right)_{100},
&&\left( \begin{smallmatrix} 0& 0&-1& 1\\ 0& 0& 1&-1\\ 0& 0& 1& 1\\ 0& 0&-1&-1\end{smallmatrix}\right)_{101},\\
& \left( \begin{smallmatrix} 0& 0&-1& 1\\ 0& 0& 1&-1\\ 0& 0& 1& 1\\ 0& 0&-1&-1\end{smallmatrix}\right)_{110},
&&\left( \begin{smallmatrix} 0& 0&-1&-1\\ 0& 0& 1& 1\\ 0& 0& 1&-1\\ 0& 0&-1& 1\end{smallmatrix}\right)_{111}.
&&
\end{aligned}\end{equation}
The first three as well as the fifth systems have the unique solution $\alpha\beta=\alpha'\beta'=\alpha\beta'=\alpha'\beta=0$ which yield $\alpha=\alpha'=0$ or $\beta=\beta'=0$. 
Therefore, the result is a monomial pair according to Proposition \ref{monomial}. 
Systems four and six to eight yield $\alpha\beta'=\alpha'\beta=0$ with solutions $\alpha=\alpha'=0$ or $\beta=\beta'=0$ and yield monomial pairs, too. 
In the generic Case 1a there are no new quadratic Clifford pairs beside the monomial ones. 

{\em Case 1b: }\ Turning to the case $\hat K=0$, $\hat I\hat J\neq0$, and $(I\cup J)^\complement\neq\emptyset$ we can take over the first four matrices of (\ref{alle}) with last row erased 
\begin{equation}\begin{aligned}
& \left( \begin{smallmatrix} 1&-1& 1&-1\\ 1&-1&-1& 1\\ 1& 1&-1&-1\end{smallmatrix}\right)_{000},
&&\left( \begin{smallmatrix} 1& 1&-1&-1\\ 1& 1& 1& 1\\ 1&-1& 1&-1\end{smallmatrix}\right)_{001},
&&\left( \begin{smallmatrix} 1& 1& 1& 1\\ 1& 1&-1&-1\\ 1&-1&-1& 1\end{smallmatrix}\right)_{010},\\
&\left(  \begin{smallmatrix} 0& 0&-1&-1\\ 0& 0& 1& 1\\ 0& 0& 1&-1\end{smallmatrix}\right)_{011}.
&&&&
\end{aligned}\end{equation}
The fourth system only yields monomial pairs due to its solution $\alpha\beta'=\beta\alpha'=0$. 
The first system yields $\alpha\beta=\alpha'\beta'=\alpha\beta'=\alpha'\beta$ which is $\alpha(\beta-\beta')=\beta(\alpha-\alpha')=\alpha'(\beta-\beta')=\beta'(\alpha-\alpha')=0$. 
In addition to the monomial ones we get a further pair connected by $\alpha=\alpha'$ and $\beta=\beta'$. Therefore, the new solution is of the form $(c,\bar c)$. 
The remaining two system have similar solutions and again yield the quadratic Clifford pair $(c,\bar c)$. 

The quadratic Clifford pairs in Case 1b are $\big(c,\bar c\big)$ 
with $c=\alpha\Gamma_I+\beta\Gamma_J$ and $\hat I\hat J\equiv 0\,{\rm mod}\,2$.

{\em Case 2a: }\ If we consider $(K\cup I\cup J)^\complement=\emptyset$ and $\hat I\hat J\neq0$ as well as $\hat K\neq0$ we have to look at all of (\ref{alle}) with third row erased:
\begin{equation}\begin{aligned}
&\left(  \begin{smallmatrix} 1&-1& 1&-1\\ 1&-1&-1& 1\\ 1& 1& 1& 1\end{smallmatrix}\right)_{000},
&&\left( \begin{smallmatrix} 1& 1&-1&-1\\ 1& 1& 1& 1\\ 1&-1&-1& 1\end{smallmatrix}\right)_{001},
&&\left( \begin{smallmatrix} 1& 1& 1& 1\\ 1& 1&-1&-1\\ 1&-1& 1&-1\end{smallmatrix}\right)_{010},\\
&\left(  \begin{smallmatrix} 0& 0&-1&-1\\ 0& 0& 1& 1\\ 0& 0&-1& 1\end{smallmatrix}\right)_{011},
&&\left( \begin{smallmatrix} 1&-1&-1& 1\\ 1&-1& 1&-1\\ 1& 1&-1&-1\end{smallmatrix}\right)_{100},
&&\left( \begin{smallmatrix} 0& 0&-1& 1\\ 0& 0& 1&-1\\ 0& 0&-1&-1\end{smallmatrix}\right)_{101},\\
&\left(  \begin{smallmatrix} 0& 0&-1& 1\\ 0& 0& 1&-1\\ 0& 0&-1&-1\end{smallmatrix}\right)_{110},
&&\left( \begin{smallmatrix} 0& 0&-1&-1\\ 0& 0& 1& 1\\ 0& 0&-1& 1\end{smallmatrix}\right)_{111}.
&&
\end{aligned}\label{nichtalle}\end{equation}
A discussion similar to the one before yields the following result. The only systems that admit additional non-monomial quadratic Clifford pairs are the first three and the fifth. The pairs in these cases are of the form $(c,-\bar c)$. 
Due to $(I\cup J\cup K)^\complement=\emptyset$ we have $\hat I+\hat J+\hat K=\dim(V)$. If $\dim(V)$ is odd we may assume $\Gamma_I\Gamma_J\Gamma_K=\mathbbm{1}$ or $\Gamma_I\Gamma_K=\Gamma_J$ and $\Gamma_J\Gamma_K=(-1)^{\hat K}\Gamma_J$ such that $e=a\Gamma_J+(-)^{\hat K}b\Gamma_I$ and the pair is of type 1b. If dim V is even we have $\Gamma_I\Gamma_J\Gamma_K\sim\Gamma^*$ such that we may assume $c=(\alpha\Gamma_I+\beta\Gamma^*)\Gamma_K$.

In Case 2a the quadratic Clifford pairs are $\big(c,-\bar c\big)$ 
with $c=(\alpha\Gamma_K+\beta\Gamma^*)\Gamma_I$ 
and $\hat K\equiv \hat I\equiv \hat J\equiv0\,{\rm mod}\,2$.

{\em Case 2b: }\ The case $(I\cup J)^\complement=\emptyset$, $\hat I\hat J\neq0$, and $\hat K=0$ (i.e.\ $\Gamma_J\sim \Gamma_I\Gamma^*$) can be read from the first four systems of (\ref{nichtalle}) with last row erased. Furthermore, we may exclude systems two and three because in this case $\Gamma^*\sim \mathbbm{1}$ and the Clifford pair is a priori monomial. 
\begin{equation}\begin{aligned}
&\left(  \begin{smallmatrix} 1&-1& 1&-1\\ 1&-1&-1& 1\end{smallmatrix}\right)_{000},
&&\left(  \begin{smallmatrix} 0& 0&-1&-1\\ 0& 0& 1& 1\end{smallmatrix}\right)_{011}.
\end{aligned}\end{equation}
For $\hat I\equiv\hat J\equiv 0\,{\rm mod}\,2$ the solution is $\alpha=\beta=\alpha'\beta'$ and $\alpha\beta'=\alpha'\beta$. Non-monomial pairs are related to solutions with $\alpha\beta\alpha'\beta'\neq0$. In this situation the coefficients obey $\alpha^2=\alpha'^2$ and $\beta^2=\beta'^2$ such that the pairs are given by $\big(c,\pm c\big)$ with $c=(\alpha+\beta\Gamma^*)\Gamma_I$.

The situation is less restrictive for $\hat I\equiv\hat J\equiv 1\,{\rm mod}\, 2$ with $\Gamma_J=\Gamma^*\Gamma_I$. 
The system has solution $\alpha\beta'=-\alpha'\beta$ which yields non-monomial pairs at most for $\alpha\beta\alpha'\beta'\neq0$. 
For  $\frac{\alpha}{\beta}=-\frac{\alpha'}{\beta'}=\cot(\phi)e^{i\psi}$ we get 
$c= \frac{\beta}{\sin(\phi)}(\cos(\phi)e^{i\psi}\Gamma_I+\sin(\phi)\Gamma_J)$ and 
$d=-\frac{\beta'}{\sin(\phi)}(\cos(\phi)e^{i\psi}\Gamma_I+\sin(\phi)\Gamma_J)$. 

The quadratic Clifford pairs in Case 2b are given by 
$\big(c,\pm c\big)$ 
with $c=(\alpha+\beta\Gamma^*)\Gamma_I$ 
for  $\hat I\equiv 0\,{\rm mod}\,2$ 
and  $\big( \alpha(\cos(\phi)e^{i\psi} +\sin(\phi)\Gamma^*)\Gamma_I, \alpha'(\cos(\phi)e^{i\psi}-\sin(\phi)\Gamma^*)\Gamma_I\big)$ 
for $\hat I\equiv 1\,{\rm mod}\,2$.

{\em Case 3a: }\ We turn to the cases with $\hat I=0$ and consider first $\hat K\neq0$ and $(J\cup K)^\complement\neq\emptyset$. In this case we are left with systems one, two, five and six of (\ref{alle}) with second row erased:
\begin{equation}\label{117}\begin{aligned}
& \left( \begin{smallmatrix} 1&-1& 1&-1\\ 1& 1&-1&-1\\ 1& 1& 1& 1\end{smallmatrix}\right)_{000},
&&\left( \begin{smallmatrix} 1& 1&-1&-1\\ 1&-1& 1&-1\\ 1&-1&-1& 1\end{smallmatrix}\right)_{001},
&&\left( \begin{smallmatrix} 1&-1&-1& 1\\ 1& 1& 1& 1\\ 1& 1&-1&-1\end{smallmatrix}\right)_{100},\\
& \left( \begin{smallmatrix} 0& 0&-1& 1\\ 0& 0& 1& 1\\ 0& 0&-1&-1\end{smallmatrix}\right)_{101}
&&
&&
\end{aligned}\end{equation}
There is no non-monomial solution of the fourth system. The first system has solution $\alpha\beta=-\alpha'\beta'=-\alpha\beta'=\beta\alpha'$ with non-monomial pair $\big( (\alpha+\beta\Gamma_J)\Gamma^K,(\alpha-\beta\Gamma_J)\Gamma_K\big)$. In the same way second system yields $\alpha\beta=\alpha'\beta'=\alpha\beta'=\beta\alpha'$ with $\big(\alpha+\beta\Gamma_J)\Gamma^K,(\alpha+\beta\Gamma_J)\Gamma_K\big)$, and the third one yields $\alpha\beta=-\alpha'\beta'=\alpha\beta'=-\alpha'\beta$ with  $\big( (\alpha+\beta\Gamma_J)\Gamma_K,(-\alpha+\beta\Gamma_J)\Gamma_K\big)$.

In Case 3a the relevant pairs are $(c_+,\bar c_-)$ 
with $c_\pm=(\alpha\pm \beta\Gamma_J)\Gamma_K$
and  $\hat K\hat J\equiv 0\,{\rm mod}\,2$.

{\em Case 3b: }\ Next we turn to the case $\hat I=\hat K=0$ and $J^\complement\neq\emptyset$. In this case we may consider the first two systems from Case 3a with last row erased, i.e.\ 
\begin{equation}\begin{aligned}
& \left( \begin{smallmatrix} 1&-1& 1&-1\\ 1& 1&-1&-1\end{smallmatrix}\right)_{000},
&&\left( \begin{smallmatrix} 1& 1&-1&-1\\ 1&-1& 1&-1\end{smallmatrix}\right)_{001},
\end{aligned}\end{equation}
The solutions of both systems are $\beta'(\alpha-\alpha')=0$ and $\beta(\alpha-\alpha')=0$ with non-monomial pairs defined by the solution
$\alpha=\alpha'$. In this case the pair is given by $(\alpha+\beta\Gamma_J,\alpha+\beta'\Gamma_J)$. The case $\hat J=0$ is the same. 

The quadratic Clifford pair in Case 3b is $(\alpha+\beta\Gamma_J,\alpha+\beta'\Gamma_J)$. 

{\em Case 4a: }\ This case is $\hat I=0$, $\hat K\neq0$ and $(J\cup K)^\complement=\emptyset$. The systems in this case are 
\begin{equation}\begin{aligned}
& \left( \begin{smallmatrix} 1&-1& 1&-1\\ 1& 1& 1& 1\end{smallmatrix}\right)_{000},
&&\left( \begin{smallmatrix} 1& 1&-1&-1\\ 1&-1&-1& 1\end{smallmatrix}\right)_{001},
&&\left( \begin{smallmatrix} 1&-1&-1& 1\\ 1& 1&-1&-1\end{smallmatrix}\right)_{100},\\
& \left( \begin{smallmatrix} 0& 0&-1& 1\\ 0& 0&-1&-1\end{smallmatrix}\right)_{101}
&&
&&
\end{aligned}\end{equation}

There are non-monomial solutions for the systems one, two, and three. The resulting pairs are $\big((\alpha+\beta\Gamma_J)\Gamma_K,(\alpha'-(-1)^{\dim(V)}\beta\Gamma_J)\Gamma_K\big)$. We may consider $\Gamma_J=\Gamma^*\Gamma_K$ such that we may rewrite the pair as 
$\big(\alpha\Gamma_K+\beta\Gamma^*,\alpha'\Gamma_K-(-1)^{\dim(V)}\beta\Gamma^*\big)$. For $\dim(V)$ odd we are left with Case 3b. 

Case 4a yields the quadratic Clifford pairs $\big(\alpha\Gamma_K+\beta\Gamma^*,\alpha'\Gamma_K-\beta\Gamma^*\big)$.

{\em Case 4b: }\ The last case is $\hat I=\hat K=0$ and $J^\complement=\emptyset$, i.e.\ $\Gamma_J=\Gamma^*$. We only consider the case $\dim(V)=\hat J$ even, because in the other case $c$ and $d$ are scalars. So the two elements are of the form $c=\alpha+\beta\Gamma^*$, $d=\alpha'+\beta'\Gamma^*$. The obstruction for $(c,d)$ being a quadratic Clifford pair is  $\alpha\beta-\alpha'\beta'+\alpha\beta'-\beta\alpha'=(\alpha-\alpha')(\beta+\beta')=0$. 

In Case 4b the non-monomial quadratic Clifford pairs are of the form
$(\alpha+\beta\Gamma^*,\alpha+\beta'\Gamma^*)$ or $(\alpha+\beta\Gamma^*,\alpha'-\beta\Gamma^*)$.

A generalization of the results from Propositions \ref{monomial} and \ref{moregen1} is given by the following approach. Let $\left(\bigcup_\alpha I_\alpha\right)^\complement=\emptyset$ and $c=\sum_\alpha a_\alpha\Gamma_{I_\alpha}$ as well as $d=\sum_\alpha \ep_\alpha a_\alpha\Gamma_{I_\alpha}$ with $\ep_\alpha\in\{\pm1\}$. Then 
\begin{align*}
c^2\Gamma_\mu 
=\ & \Big(\sum_\alpha a_\alpha\Gamma_{I_\alpha}\Big)^2\Gamma_\mu 
=  \sum_\alpha a_\alpha^2  +\sum_{\alpha<\beta} a_\alpha a_\beta \big(1+(-1)^{\hat\alpha\hat\beta}\big)\Gamma_{I_\alpha I_\beta}\Gamma_\mu
\end{align*}
and
\begin{align*}
\Gamma_\mu d^2
=\ &\Gamma_\mu\Big(\sum_\alpha \ep_\alpha a_\alpha\Gamma_{I_\alpha}\Big)^2 \\
=\	& \sum_\alpha a_\alpha^2 \Gamma_\mu  +\sum_{\hat \alpha<\hat \beta} \ep_\alpha\ep_\beta \theta^\mu_\alpha\theta^\mu_\beta a_\alpha a_\beta  (-1)^{\hat\alpha+\hat\beta}\big(1+(-1)^{\hat\alpha\hat\beta}\big)\Gamma_{I_\alpha I_\beta}\Gamma_\mu
\end{align*}
as well as
\begin{align*}
&\Big(\sum_\alpha a_\alpha\Gamma_{I_\alpha}\Big)\Gamma_\mu \Big(\sum_\alpha\ep_\alpha a_\alpha\Gamma_{I_\alpha}\Big) \\
=\	& \sum_\alpha \ep_\alpha\theta^\mu_\alpha(-1)^{\hat\alpha} a_\alpha^2 \Gamma_\mu
	+\sum_{\hat\alpha<\hat \beta} a_\alpha a_\beta \big(\ep_\beta\theta^\mu_\beta(-1)^{\hat\beta}+\ep_\alpha\theta^\mu_\alpha(-1)^{\hat \alpha + \hat\alpha\hat\beta}\big)
	\Gamma_{I_\alpha I_\beta}\Gamma_\mu
\end{align*}
with $\theta^\mu_\alpha:=\theta^\mu_{I_\alpha}$ and $\hat \alpha:=\hat I_\alpha$.
Due to Remark \ref{a=0} we may assume that $\hat\alpha>0$. Therefore, our approach yields 
\begin{equation}\label{f}
q_{c,d}(e_\mu)= 2\sum_\alpha \big(1-\ep_\alpha\theta^\mu_\alpha(-1)^{\hat\alpha}\big) a_\alpha^2 \Gamma_\mu
		+ (\ldots)\,.
\end{equation}
The further summands collected in "$(\ldots)$" in (\ref{f}) are of order higher than one. Therefore, $(c,d)$ is a quadratic Clifford pair if and only if the $\Gamma_{I_\alpha I_\beta}\Gamma_\mu$-component of $q_{c,d}(e_\mu)$ vanishes. The coefficient of this contribution is given by 
\begin{equation}\label{val}
\big(1+(-1)^{\hat\alpha\hat\beta}\big)
	\big(1 +\ep_\alpha\ep_\beta\theta^\mu_\alpha\theta^\mu_\beta(-1)^{\hat\alpha+\hat\beta}\big)
-2\big(\ep_\beta\theta^\mu_\beta(-1)^{\hat\beta}
		+\ep_\alpha\theta^\mu_\alpha(-1)^{\hat\alpha+\hat\alpha\hat\beta}\big)	\,.
\end{equation}
In the following Table \ref{table:app} we list this value all possible combinations $\theta_\alpha^\mu$, $\hat\alpha$, $\hat\beta$.

\noindent\makebox[\textwidth]{%
\begin{minipage}{\textwidth}\captionof{table}{(\ref{val}) for given $\theta_\alpha^\mu$, $\hat\alpha$, $\hat\beta$}\label{table:app}
\vspace*{-2ex}{$\displaystyle 
\renewcommand{\arraystretch}{1.2}
\begin{array}{ll@{$\,:\qquad$}l}
\multicolumn{3}{l}{\text{1)}\ \hat\alpha\equiv \hat\beta\equiv 0\,{\rm mod}\,2} \\
\quad\text{1a)} & \theta^\mu_\alpha=1,\theta^\mu_\beta=1  
	& 2 \big(1 +\ep_\alpha\ep_\beta-\ep_\beta-\ep_\alpha\big) =2(1-\ep_\beta)(1-\ep_\alpha)\\
\quad\text{1b)} & \theta^\mu_\alpha=-1,\theta^\mu_\beta=1 
	& 2 \big(1 -\ep_\alpha\ep_\beta-\ep_\beta +\ep_\alpha\big)=2(1-\ep_\beta)(1+\ep_\alpha)\\
\quad\text{1c)} & \theta^\mu_\alpha=1,\theta^\mu_\beta=-1 
	& 2 \big(1 -\ep_\alpha\ep_\beta+\ep_\beta -\ep_\alpha\big)=2(1+\ep_\beta)(1-\ep_\alpha)\\[1ex]
\multicolumn{3}{l}{\text{2)}\ \hat\alpha\equiv 1\,{\rm mod}\,2,\hat\beta\equiv 0\,{\rm mod}\,2} \\
\quad\text{2a)} & \theta^\mu_\alpha=1,\theta^\mu_\beta=1  
	& 2\big(1 -\ep_\alpha\ep_\beta-\ep_\beta+\ep_\alpha\big)=2(1-\ep_\beta)(1+\ep_\alpha)	\\
\quad\text{2b)} & \theta^\mu_\alpha=-1,\theta^\mu_\beta=1 
	& 2\big(1 +\ep_\alpha\ep_\beta-\ep_\beta-\ep_\alpha\big)=2(1-\ep_\beta)(1-\ep_\alpha)	\\
\quad\text{2c)} & \theta^\mu_\alpha=1,\theta^\mu_\beta=-1 
	& 2\big(1 +\ep_\alpha\ep_\beta+\ep_\beta+\ep_\alpha\big)=2(1+\ep_\beta)(1+\ep_\alpha)	\\[1ex]
\multicolumn{3}{l}{\text{3)}\ \hat\alpha\equiv \hat\beta\equiv 1\,{\rm mod}\,2} \\
\quad\text{3a)} & \theta^\mu_\alpha=1,\theta^\mu_\beta=1  
	&  2(\ep_\beta-\ep_\alpha)	\\
\quad\text{3b)} & \theta^\mu_\alpha=-1,\theta^\mu_\beta=1 
	& 2(\ep_\beta+\ep_\alpha)	\\
\quad\text{3c)} & \theta^\mu_\alpha=1,\theta^\mu_\beta=-1 
	& -2(\ep_\beta+\ep_\alpha)		
%
\end{array}
$}
\end{minipage}
}

The proof of Proposition \ref{hom} is now done by the following considerations. 

Firstly, suppose that the amount of summands in $c$ is bigger or equal to four, such that all sub cases are present if one is present at all. We see that 1) and 2) are fulfilled for $\ep_\alpha=(-1)^{\hat \alpha}$. 
Moreover, 3) can not be fulfilled if it is present such that the amount of odd summands is at most one.  
If the amount of summands is three a similar argument excludes the case of two odd summands: 2b) and 2c) would yield $\ep_{\text{even}}=-\ep_{\text{odd}_1}=-\ep_{\text{odd}_2}$ and 3) $\ep_{\text{odd}_1}=-\ep_{\text{odd}_2}$.

Therefore, in even dimensions our approach yields a quadratic Clifford pair if and only if $\hat I_\alpha$ is even for all $\alpha$, i.e.\ $c=d$. 
In odd dimensions all $\hat I_\alpha$ would be even but one. Nevertheless, in both situations we have $\ep_\alpha=(-1)^{\hat \alpha}$

In the case of two summands the subcases denoted by a) in the above list do not appear and we recover the pseudo-monomial solutions of Table \ref{table2}. In either case we have
\begin{equation}
q_{c,d}(e_\mu)=2\sum_\alpha \big(1-\theta^\mu_\alpha\big) a_\alpha^2\Gamma_\mu\,.
\end{equation}

As promised after Proposition \ref{hom-p} we will list next the conditions such that the pair $(c,d)$ with 
\begin{equation*}
\begin{aligned}
c&=\sum_\alpha c_\alpha\Gamma_{I_\alpha}+\Gamma^*\sum_\alpha\hat c_\alpha\Gamma_{I_\alpha},\quad 
d=\sum_\alpha \ep_\alpha c_\alpha\Gamma_{I_\alpha}+\Gamma^*\sum_\alpha\hat\ep_\alpha\hat c_\alpha\Gamma_{I_\alpha}
\end{aligned}
\end{equation*}
yields a quadratic Clifford pair:
\begin{align*}
0=\ & c_\alpha \hat c_\beta \Big(
			\big(1-(-1)^{\hat \alpha}\theta^\mu_\alpha\ep_\alpha\big)
			\big((-1)^{\hat\alpha\hat\beta}+(-1)^{\hat\alpha+\hat\beta}\theta^\mu_\beta\hat\ep_\beta\big)\\
&\qquad	 +(-1)^{\hat\alpha}(-1)^{\hat\alpha\hat\beta}
			\big(1+(-1)^{\hat\beta}\theta^\mu_\beta\hat\ep_\beta\big)
			\big((-1)^{\hat\alpha\hat\beta}-\theta^\mu_\alpha\ep_\alpha\big)
			\Big) \\
  & +\hat c_\alpha c_\beta \Big( 
   			(-1)^{\hat\alpha\hat\beta}
   			\big(1-(-1)^{\hat\beta}\theta^\mu_\beta\ep_\beta\big)
   			\big((-1)^{\hat\alpha\hat\beta}+(-1)^{\hat\alpha+\hat\beta}\theta^\mu_\alpha\hat\ep_\alpha\big)\\
&\qquad	 +(-1)^{\hat\beta}
   			\big(1+(-1)^{\hat\alpha}\theta^\mu_\alpha\hat\ep_\alpha\big)
   			\big((-1)^{\hat\alpha\hat\beta}-\theta^\mu_\beta\ep_\beta\big)
   			\Big)\,,\\
0=\ & c_\alpha  c_\beta \Big(
			\big(1-(-1)^{\hat\beta}\theta^\mu_\beta\ep_\beta\big)
			\big(1-(-1)^{\hat\alpha}(-1)^{\hat\alpha\hat\beta}\theta^\mu_\alpha\ep_\alpha\big)\\
&\qquad	 +(-1)^{\hat\alpha\hat\beta}
			\big(1-(-1)^{\hat\alpha}\theta^\mu_\alpha\ep_\alpha\big)
			\big(1-(-1)^{\hat\beta}(-1)^{\hat\alpha\hat\beta}\theta^\mu_\beta\ep_\beta\big)
			\Big) \\
  & +\hat c_\alpha \hat c_\beta \Big( 
   			(-1)^{\hat\alpha}
   			\big(1+(-1)^{\hat\beta}\theta^\mu_\beta\hat\ep_\beta\big)
   			\big(1+(-1)^{\hat\beta}(-1)^{\hat\alpha\hat\beta}\theta^\mu_\alpha\hat\ep_\alpha\big)\\
&\qquad	 +(-1)^{\hat\beta}(-1)^{\hat\alpha\hat\beta}
   			\big(1+(-1)^{\hat\alpha}\theta^\mu_\alpha\hat\ep_\alpha\big)
   			\big(1+(-1)^{\hat\alpha}(-1)^{\hat\alpha\hat\beta}\theta^\mu_\beta\hat\ep_\beta\big)
   			\Big)\,,\\
0=\ &\sum_\alpha c_\alpha\hat c_\alpha \Big(
		    \big(1-(-1)^{\hat\alpha}\theta^\mu_\alpha\ep_\alpha\big)
			\big(1+\theta^\mu_\alpha\hat \ep_\alpha\big)
 +(-1)^{\hat\alpha}
			\big(1+(-1)^{\hat\alpha}\theta^\mu_\alpha\hat\ep_\alpha\big)
			\big(1-\theta^\mu_\alpha\ep_\alpha\big)
			\Big)\,.
\end{align*}
A discussion similar to the one before yields the strong restrictions described in Proposition \ref{hom-p} if we consider $\ep_\alpha=-\hat\ep_\alpha=(-1)^{\hat\alpha}$. 

\subsection{Proof of (\ref{19})-(\ref{22})}\label{app3}

We consider the following matrix form of the images of the Clifford map $\rho$:
\begin{equation}
\begin{gathered}
\rho(v^*)=\frac{1}{\sqrt{2}}{\renewcommand{\arraystretch}{1.2}
\begin{pmatrix}0&Bv\\0&0\end{pmatrix}}\,,\quad 
\rho(w)={\renewcommand{\arraystretch}{1.3}\begin{pmatrix}\bar \rho_{11}(v) & \rho_{12}(v)\\\bar \rho_{21}(w) &\rho_{22}(w)\end{pmatrix}}\,,\\
\rho(e_+)={\renewcommand{\arraystretch}{1.3}\begin{pmatrix}\bar a_{11}&\sqrt{2}a_{12}\\\sqrt{2}\bar a_{21}&a_{22}\end{pmatrix}}\,,\quad
\rho(e_-)={\renewcommand{\arraystretch}{1.3}\begin{pmatrix}\bar b_{11}&\sqrt{2}b_{12}\\\sqrt{2}\bar b_{21}&b_{22}\end{pmatrix}}\,.
\end{gathered}
\end{equation}
The $V^*$-equivariance is expressed as
\begin{equation}\label{a1}
\begin{aligned}
{}[\rho(v^*),\rho(e_+)]
=\ &  	\frac{1}{\sqrt{2}}{\renewcommand{\arraystretch}{1.3}\begin{pmatrix}\sqrt{2}Bv a_{21} & Bv a_{22}-a_{11}Bv\\0 &-\sqrt{2}a_{21}Bv \end{pmatrix}} \\
=\ &		0\,,
\end{aligned}
\end{equation}
\begin{equation}\label{a2}
\begin{aligned}
{}[\rho(v^*),\rho(w)]
=\ &\frac{1}{\sqrt{2}}{\renewcommand{\arraystretch}{1.3}\begin{pmatrix} Bv\bar \rho_{21}(w) & Bv\rho_{22}(w)-\bar \rho_{11}(w) Bv\\0 &-\bar \rho_{21}(w)Bv \end{pmatrix}}\\
=\ &-\langle Bv,w\rangle {\renewcommand{\arraystretch}{1.3}\begin{pmatrix} \bar a_{11} &\sqrt{2}a_{12}\\ \sqrt{2}\bar a_{21} &a_{22}\end{pmatrix}}\,,
\end{aligned}
\end{equation}
\begin{equation}\label{a3}
\begin{aligned}
{}[\rho(v^*),\rho(e_-)]
=\ & \frac{1}{\sqrt{2}}{\renewcommand{\arraystretch}{1.3}\begin{pmatrix}\sqrt{2}Bv b_{21}  & Bv b_{22}-\bar b_{11}Bv\\0 &-\sqrt{2}\bar b_{21}Bv \end{pmatrix}} \\
=\ & {\renewcommand{\arraystretch}{1.3}\begin{pmatrix}\bar \rho_{11}(Bv) & \rho_{12}(Bv)\\\bar \rho_{21}(Bw) &\rho_{22}(Bw)\end{pmatrix}}\,.
\end{aligned}
\end{equation}
(\ref{a1}) yields {$\bar a_{21}=0$} and (\ref{a3}) yields {$\bar\rho_{21}=0$}. Then (\ref{a2}) immediately yields {$\bar a_{11}=a_{22}=0$}. 

Furthermore, (\ref{a3}) yields {$\rho_{12}(v)= -\frac{1}{\sqrt{2}}s_{\bar b_{11},b_{22}}(v)$} as well as {$\bar \rho_{11}(v)=v\bar b_{21}$} and {$\rho_{22}(v)=-\bar b_{21}v$}. The last two equations together with (\ref{a2}) yield $ v\bar b_{21}w+w\bar b_{21}v=2\langle v,w \rangle a_{12}$. 
We consider a basis $\Gamma_\mu$ for which the latter is written as {$\Gamma_{\{\mu}\bar b_{21}\Gamma_{\nu\}} = g_{\mu\nu}a_{12}$} and get after taking the trace $a_{12}= \frac{1}{n}\sum_\mu\Gamma_\mu  \bar b_{21} \Gamma^\mu$.

\end{appendix}


\begin{thebibliography}{99}

\begin{small} 
\bibitem{Cortes}
Dimitry V.~Alekseevsky, Vicente Cort\'es, Chandrashekar Devchand, and Uwe Semmelmann: 
\newblock Killing Spinors are Killing vector fields in Riemannian supergeometry.
\newblock  {\em J.\ Geom.\ Phys.} {\bf 26} (1998), no.~1-2, 37–50.

\bibitem{BaumKath99}
Helga Baum  and Ines Kath:
\newblock Parallel spinors and holonomy groups on pseudo-Riemannian spin manifolds.
\newblock {\em Ann.\ Global Anal.\ Geom.} {\bf 17} (1999), no.~1, 1–17. 

\bibitem{Baum12}
Helga Baum, Kordian L\"arz, and Thomas Leistner:
\newblock  On the full holonomy group of manifolds with special holonomy.
\newblock arXiv:1204.5657 [math.DG], 2012.

\bibitem{Besse}
Arthur L.\ Besse:
\newblock{\em Einstein Manifolds.} (Classics in Mathematics)
\newblock Springer Verlag, Reprint of the 1987 ed., 2008.

\bibitem{CW70}
Michel Cahen and Nolan Wallach:
\newblock Lorentzian symmetric spaces.
\newblock {\em Bull.\ Amer.\ Math.\ Soc.} {\bf 76} (1970), 585-591.

\bibitem{CS}
Andreas \v{C}ap and Jan Slov\'{a}k:
\newblock {\em Parabolic Geometries I}. 
\newblock Mathematical Surveys and Monographs, 154. American Mathematical Society, 2009.

\bibitem{Chevalley}
Claude Chevalley:
\newblock {\em The Algebraic Theory of Spinors and Clifford Algebras} (Collected Works, Vol.~2).
\newblock Springer-Verlag, Berlin, 1997.

\bibitem{CheKo84}
Piotr T.~Chru\'{s}ciel and Jerzy Kowalski-Glikman:
\newblock The isometry group and {K}illing spinors for the pp wave space-time in $D=11$ supergravity.
\newblock {\em Phys.\ Lett.\ B} {\bf 149} (1984), no.~1-3, 107-110.

\bibitem{Fig1}
Jos{\'e} Figueroa-O'{}Farrill:
\newblock Lorentzian symmetric spaces in supergravity.
arXiv:math/0702205v1 [math.DG], 2007.

\bibitem{MeFig04}
Jos{\'e} Figueroa-O'{}Farrill, Patrick Meessen, and Simon Philip:
\newblock Supersymmetry and homogeneity of M-theory backgrounds.
\newblock {\em Class.\ Quant.\ Grav.} {\bf 22} (2005), 207-226.

\bibitem{FigPapado1}
Jos{\'e} Figueroa-O'{}Farrill and George Papadopoulos:
\newblock Homogeneous fluxes, branes and a maximally supersymmetric solution of {M}-theory.
\newblock {\em J.~High Energy Phys.} {\bf 8} (2001) Paper 36:26.

\bibitem{Hustler}
Jos\'{e} Figueroa-O'Farrill and  Noel Hustler:
\newblock Symmetric backgrounds of type IIB supergravity.
\newblock  {\em Classical Quantum Gravity} {\bf 30} (2013), no.~4, 045008, 36 pp.

\bibitem{Harvey}
F.~Reese Harvey:
\newblock {\em Spinors and Calibrations.} (Perspectives in Mathematics)
\newblock Academic Press Inc., 1990. 
 
\bibitem{KathOlb}
Ines Kath and Martin Olbrich:
\newblock On the structure of pseudo-Riemannian symmetric spaces.
\newblock {\em Transform.\ Groups} {\bf 14} (2009), no.~4, 847-885. 

\bibitem{KlinkerSSKS}
Frank Klinker:
\newblock Supersymmetric Killing structures.
\newblock {\em Comm.\ Math.\ Phys.} {\bf 255} (2005), no.~2, 419-467.

\bibitem{KlinkerTor}
Frank Klinker:
\newblock The torsion of spinor connections and related structures. 
\newblock {\em SIGMA Symmetry Integrability Geom.\ Methods Appl.} {\bf 2} (2006), Paper 077, 28 pp.

\bibitem{KlinkerDeform}
Frank Klinker: 
\newblock SUSY structures on deformed supermanifolds.
\newblock {\em  Differential Geom. Appl.} {\bf 26} (2008), no.~5, 566-582.

\bibitem{KN2}
Shoshichi Kobayashi and Katsumi Nomizu.:
\newblock {\em Foundations of Differential Geometry, Vol.~II.}
\newblock Wiley Classics Library, John Wiley \& Sons, Inc, 1996.

\bibitem{LawMich}
H. Blaine Lawson, Jr.\ and Marie-Louise Michelsohn:
\newblock {\em Spin geometry.} (Princeton Mathematical Series, 38)
\newblock Princeton University Press, 1989.

\bibitem{Leist07}
Thomas Leistner: 
\newblock On the classification of Lorentzian holonomy groups.
\newblock {\em J.\ Differential Geom.} {\bf 76} (2007), no.~3, 423-484.

\bibitem{Lou}
Pertti Lounesto:
\newblock {\em Clifford algebras and spinors.} (London Mathematical Society Lecture Note Series, 286) 
\newblock Cambridge University Press, 2.\ ed., 2001.

\bibitem{Meessen2002}
Patrick Meessen:
\newblock Small note on pp-wave vacua in 6 and 5 dimensions.
\newblock {\em Phys.\ Rev.\ D} {\bf 65} (2002), 087501.

\bibitem{Neukirchner}
Thomas Neukirchner: 
\newblock Solvable Pseudo-Riemannian Symmetric Spaces.
\newblock arXiv:math/0301326 [math.DG], 2003.

\bibitem{Putt}
Joseph Putter:
\newblock Maximal sets of anti-commuting skew-symmetric matrices.
\newblock {\em J.\ London Math.\ Soc.} {\bf 42} (1967), 303-308.  

\bibitem{santi1}
Andrea Santi:
\newblock Superizations of Cahen-Wallach symmetric spaces and spin representations of the Heisenberg algebra.
\newblock {\em J.\ Geom.\ Phys.} {\bf 60} (2010), 295-325.

\bibitem{Wu67}
Hung-Hsi Wu:
\newblock Holonomy groups of indefinite metrics.
\newblock {\em Pacific J.\ Math.} {\bf 20} (1967), no.~2, 351-392.

\end{small}
\end{thebibliography}

\end{document}